\newaliascnt{thmCt}{lma}
\newtheorem{thm}[thmCt]{Theorem}
\newaliascnt{corCt}{lma}
\newtheorem{cor}[corCt]{Corollary}
\newaliascnt{propCt}{lma}
\newtheorem{prop}[propCt]{Proposition}
\newtheorem*{thm*}{Theorem}
\newtheorem*{dfn*}{Definition}
\newtheorem*{cor*}{Corollary}
\newtheorem*{prop*}{Proposition}
\newtheorem*{conj*}{Conjecture}
\theoremstyle{definition}
\newaliascnt{prgCt}{lma}
\newtheorem{prg}[prgCt]{}
\newaliascnt{dfnCt}{lma}
\newtheorem{dfn}[dfnCt]{Definition}
\newaliascnt{rmkCt}{lma}
\newtheorem{rmk}[rmkCt]{Remark}
\newaliascnt{rmksCt}{lma}
\newaliascnt{ntnCt}{lma}
\newaliascnt{ntnsCt}{lma}
\newaliascnt{qstCt}{lma}
\newtheorem{qst}[qstCt]{Question}
\newaliascnt{prblCt}{lma}
\newaliascnt{exaCt}{lma}
\newcommand{\CC}{\mathcal{C}}
\newcommand{\T}{\mathbb{T}}
\newcommand{\N}{\mathbb{N}}
\newcommand{\Z}{\mathbb{Z}}
\newcommand{\K}{\mathrm{K}}
\DeclareMathOperator{\im}{im}
\DeclareMathOperator{\Fun}{Fun}
\newcommand{\alg}{\mathrm{alg}}
\newcommand{\CatCa}{C^*}
\DeclareMathOperator{\Lat}{Lat}
\DeclareMathOperator{\AbGp}{AbGp}
\DeclareMathOperator{\Gp}{Gp}
\DeclareMathOperator{\PoS}{PoSet}
\DeclareMathOperator{\Cu}{Cu}
\DeclareMathOperator{\NCCW}{NCCW}
\DeclareMathOperator{\CW}{CW}
\DeclareMathOperator{\AI}{AI}
\DeclareMathOperator{\ASH}{ASH}
\DeclareMathOperator{\A}{A}
\DeclareMathOperator{\AH}{AH}
\DeclareMathOperator{\Ell}{Ell}
\DeclareMathOperator{\AF}{AF}
\DeclareMathOperator{\KT}{KT}
\DeclareMathOperator{\LF}{LF}
\DeclareMathOperator{\L(F}{L(F}
\DeclareMathOperator{\Lsc}{Lsc}
\DeclareMathOperator{\Hom}{Hom}
\DeclareMathOperator{\id}{id}
\DeclareMathOperator{\PoM}{PoM}
\DeclareMathOperator{\oM}{oM}
\newcommand{\PoSG}{\PoS_{*\!\Gp}}
\newcommand{\PoMG}{\PoM_{*{\Gp}}}
\newcommand{\CuG}{\Cu_{*\!\Gp}}
\newcommand{\CG}{\CC_{*{\Gp}}}
\newcommand{\Cus}{\Cu^*}
\newcommand{\oMs}{\oM^*}
\begin{document}
\onehalfspacing
\title{A systematic approach for invariants of $\CatCa$-algebras}
\author{Laurent Cantier}

\address{Laurent Cantier\newline
Departament de Matem\`{a}tiques \\
Universitat Aut\`{o}noma de Barcelona \\
08193 Bellaterra, Spain\newline
Institute of Mathematics\\ Czech Academy of Sciences\\ Zitna 25\\ 115 67 Praha 1\\ Czechia}
\email[]{laurent.cantier@uab.cat}

\thanks{The author was supported by the Spanish Ministry of Universities and the European Union-NextGenerationEU through a Margarita Salas grant and partially supported by the project PID2020-113047GB-I00.}
\keywords{$\CatCa$-algebras, Categorical Framework, Hausdorffized unitary Cuntz semigroup}

\begin{abstract} We define a categorical framework in which we build a systematic construction that provides generic invariants for $\CatCa$-algebras. The benefit is significant as we show that any invariant arising this way automatically enjoys nice properties such as continuity, metric on morphisms and a theory of ideals and quotients which naturally encapsulates compatibility diagrams. Consequently, any of these invariants appear as good candidates for the classification of non-simple $\CatCa$-algebras. Further, it is worth mentioning that most of the existing invariants could be rewritten via this method. As an application, we define an Hausdorffized version of the unitary Cuntz semigroup and explore its potential towards classification results. We pose several open lines of research.
\end{abstract}
\maketitle

\section{Introduction}
The classification of $\CatCa$-algebras is an area of research that has been around for decades and has been very prolific and productive. The field has been successful in many ways: it has built bridges between unrelated theories, obtained milestone results and continues to animate many researchers around the world. A fruitful modern approach consists in using functors as invariants, as they allow to classify not only the $\CatCa$-algebras themselves, but also morphisms between them. This approach gives concise, clear and powerful results. Among these, it is relevant to mention the classification of Kirchberg algebras via KK-Theory (see \cite{KP00},\cite{P07}) and the classification of simple nuclear $\CatCa$-algebras (see e.g. \cite{R02},\cite{W18} for a general overview). However, among all the invariants that have been employed, starting with the now-famous Elliott invariant up to its most complete form $\underline{\KT}_u$ without forgetting filtered $\K$-Theory for graph $\CatCa$-algebras, a lack of thoroughness of the categorical framework in which they operate can often be observed. The reasons for this fact are multiple, e.g. a clear categorical context is not necessary to obtain strong classification results. Also, most of sophisticated invariants are often the result of successive refinements over time done by patching existing ones together. Therefore the codomain category in which they dive in is often needed long lists of compatibility conditions between the objects and maps that come into play. This already happens in the simple setting and it gets even more pronounced in the non-simple setting since compatibility diagrams between ideals also need to appear. As a result, invariants for $\CatCa$-algebras enjoy a poor categorical context. Several consequences derive from this, e.g. the notion of abstract morphism in the codomain category is never obvious and the abstract study of the category itself becomes irrelevant. 
One of the only invariants to date which has a properly defined codomain category, for both simple and non-simple $\CatCa$-algebras, is the Cuntz semigroup. Let alone that it has provided tremendous classification results in the non-simple setting (see \cite{R12}), the study of abstract Cuntz semigroups has become an attractive area of work in its own right, independently of the classification of $\CatCa$-algebras, and has yielded a vast area of research that has many connections and applications with the rest of mathematics and still has a lot to unravel. See e.g. \cite{APRT22},\cite{APT18},\cite{APT20},\cite{APT20c}.

Driven by the goal of classifying non-simple $\CatCa$-algebras, the approach described above, (i.e. extending invariants for simple $\CatCa$-algebras to the non-simple setting by applying the invariant on all the ideals) is not satisfactory from a categorical viewpoint since it imposes to \textquoteleft manually\textquoteright\ add compatibility conditions via complex commutative diagrams. Another approach consists in defining a variation of the Cuntz semigroup (which has been proven a solid invariant when it comes to unravel the ideal structure of the $\CatCa$-algebra) that incorporates  information not carried by the Cuntz semigroup itself. The present author has been constructing a unitary Cuntz semigroup which incorporates $\K_1$-information into the Cuntz semigroup in a satisfactory manner as far as the categorical framework is concerned. (The codomain category $\Cu^\sim$ is slightly larger but roughly similar to the category $\Cu$. See \cite{C21a}, \cite{C21b}) Subsequently, another study has sought to mimic this approach in order to define a total Cuntz semigroup which incorporates the total $\K$-Theory into the Cuntz semigroup. See \cite{AL23}. Both of the constructions are very alike, share many categorical properties and even though many proofs were based on similar arguments, they had to be done twice with equal care. 

The main motivation of this paper lies in using Category Theory to abstractly (and hence systematically) generalize this approach. In particular, we expose a generic construction which produces variations of the Cuntz semigroup which incorporate (almost) any additional information one desires together with a unified categorical framework, modeled after the category of abstract Cuntz semigroups. Therefore many of the existing (and hopefully upcoming) invariants could properly fit into this category when redefined via the generic method we provide. In a first part, we set up a categorical context by defining \emph{Cuntz group systems} which yields the definition of the category $\Cus$. 
\begin{dfn*}
The category $\Cus$ is the category whose objects are ordered monoids with a compact neutral element, satisfying order-theoretic axioms (O1)-...-(04) together with axioms (PC), (PD), (S0) and whose morphisms are ordered monoid morphisms preserving the compact-containment relation and suprema of $\leq$-increasing sequences. 
\end{dfn*}
Subsequently, we investigate the category $\Cus$. We show that it has inductive limits and we develop a theory of ideals and quotients for abstract $\Cus$-semigroups. We also expose a generic method, that we call the \emph{$\Cu_\K$-type construction}, to build invariants for $\CatCa$-algebras suitable for the non-simple setting. We show that these variations of the Cuntz semigroup (including the unitary and the total Cuntz semigroup) enjoy a clear categorical framework together many categorical properties such as continuity, exactness, an ideal-quotient theory and distances on morphism. We are particularly interested in a variation of the Cuntz semigroup incorporating the Hausdorffized algebraic $\K_1$-group termed $\overline{\K}^{\alg}_1$. (See \cite{T95} and \cite{NT96}.) This ingredient has played a key role in the classification of simple $\A\!\T$-algebras done by Nielsen and Thomsen in \cite{NT96}. In particular, $\overline{\K}^{\alg}_1$ is intimately related to the de la Harpe-Skandalis determinant, which appears to be necessary when it comes to classifying unitary elements, as pointed out in \cite{C23b}. Consequently, we define a Hausdorffized unitary Cuntz semigroup via our method.
\begin{thm*}
The Hausdorffized unitary Cuntz semigroup $\Cu_{\overline{\K}^{\alg}_1}: \CatCa\longrightarrow \Cus$
is a well-defined continuous functor merging the Hausdorffized algebraic $\K_1$-group into the Cuntz semigroup.
\end{thm*}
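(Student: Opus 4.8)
\emph{Strategy.} The plan is to reduce the entire statement to the general machinery already established for the $\Cu_\K$-type construction. Recall that this construction takes as input a \emph{Cuntz group system}, namely a coefficient functor $F \colon \CatCa \to \AbGp$ together with the natural comparison maps relating $F$ to the Cuntz-semigroup structure, and --- provided this input satisfies the required compatibility axioms --- outputs an object of $\Cus$ in a way that is automatically functorial and continuous. Thus the theorem should follow once I verify that $\overline{\K}^{\alg}_1$, equipped with the maps induced by the de la Harpe--Skandalis determinant, constitutes a valid Cuntz group system; every structural axiom (O1)--(O4), (PC), (PD) and (S0) of the output, as well as functoriality into $\Cus$, is then delivered by the general theorem rather than checked by hand.

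\emph{Verifying the ingredient.} First I would fix the functor $\overline{\K}^{\alg}_1$, sending a $\CatCa$-algebra $A$ to its Hausdorffized algebraic $\K_1$-group --- essentially the quotient $U_\infty(A)/\overline{DU_\infty(A)}$ of the infinite unitary group by the closure of its commutator subgroup --- and recall from \cite{T95,NT96} that this is a functor into (topological) abelian groups. Since the $\Cu_\K$-type construction is modeled on the unitary Cuntz semigroup with the topological $\K_1$ replaced by $\overline{\K}^{\alg}_1$, the verification amounts to re-running the structural checks of \cite{C21a,C21b} for this new coefficient functor. Concretely, I would confirm that orthogonal sums of unitaries over mutually orthogonal hereditary subalgebras are sent to the monoid addition, that the class of the unit over the trivial corner is compact and neutral, and that the comparison maps induced by inclusions of corners are natural. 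The de la Harpe--Skandalis determinant is precisely the device that makes these maps well defined and compatible with the Cuntz order, so this is where it enters.

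\emph{The main obstacle: continuity.} The delicate point is continuity, that is, showing $\overline{\K}^{\alg}_1$ commutes with inductive limits in the sense demanded by $\Cus$. The difficulty is that Hausdorffization involves a topological closure, and closures do not in general commute with algebraic colimits, so one cannot simply pass $\varinjlim$ through the quotient. Here I would invoke the continuity theorem for $\overline{\K}^{\alg}_1$ established by Thomsen and Nielsen--Thomsen, which controls exactly this interaction between the limit and the completion. With continuity of the coefficient functor in hand, the general $\Cu_\K$-type theorem upgrades it to preservation of suprema of $\leq$-increasing sequences and of inductive limits in $\Cus$, yielding at once well-definedness, functoriality and continuity of $\Cu_{\overline{\K}^{\alg}_1}$. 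I expect the continuity step to be the only one requiring genuinely new analysis; the remaining axioms should transfer essentially verbatim from the unitary Cuntz semigroup once the coefficient functor is seen to satisfy the same formal properties as $\K_1$.
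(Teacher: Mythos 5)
Your reduction is the paper's own: $\Cu_{\overline{\K}^{\alg}_1}$ is obtained by feeding the coefficient functor $\overline{\K}^{\alg}_1$ into the generic $\Cu_\K$-type construction, and the only hypotheses that the general continuity theorem of Section \ref{sec:A} requires are that $\overline{\K}^{\alg}_1\colon\CatCa\longrightarrow\AbGp$ preserves inductive limits and sends $0$ to the trivial group; the former is precisely the Thomsen/Nielsen--Thomsen continuity result you invoke, so your identification of continuity of the coefficient functor as the one genuinely new input is exactly right. One correction to your middle paragraph, though: in the abstract construction the fiber over $x\in\Cu(A)$ is $\overline{\K}^{\alg}_1(I_x)$ and the comparison maps are simply $\overline{\K}^{\alg}_1$ applied to the ideal inclusions $I_x\subseteq I_y$, so naturality is automatic from functoriality of $\overline{\K}^{\alg}_1$, and the de la Harpe--Skandalis determinant plays no role in well-definedness (the paper mentions it only as motivation for why this invariant should classify unitary elements). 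Likewise, the unitary-level checks you propose (orthogonal sums of unitaries mapping to monoid addition, compactness of the class of the unit over a corner) belong to the concrete picture of \cite{C21a}; the entire point of the webbing formalism is that axioms (O1)--(O4), (PC), (PD), (S0), compactness of the neutral element, functoriality and continuity are delivered once and for all by the general results on $\mathfrak{w}\colon\CuG\longrightarrow\Cus$ and need not be re-run for each coefficient functor.
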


Lastly, we pose several open lines of researches including proposals for other generic methods that could produce other types of invariants for $\CatCa$-algebras, a metric on the set of concrete $\Cus$-morphisms and hints towards classification results in the non-simple setting. In particular, we state a conjecture outlined in \cite[Section 6]{C23b}. By $\AH_1$-algebras, we mean inductive limits of (direct sums of) homogeneous $\CatCa$-algebras with one-dimensional spectrum and by $\ASH_1$-algebras, we mean inductive limits of (direct sums of) one-dimensional $\NCCW$-complexes. Note that any $\AH_1$-algebra is an $\ASH_1$-algebra with torsion-free $\K_1$-group. E.g, $\AF,\AI,\A\!\T$-algebras are $\AH_1$-algebras while the Jiang-Su algebra $\mathcal{Z}$ is an $\ASH_1$-algebra.

\begin{conj*} The Hausdorffized unitary Cuntz semigroup classifies unitary elements of any unital $\ASH_1$-algebra with torsion-free $\K_1$-group. 
\end{conj*}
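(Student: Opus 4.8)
The plan is to prove the conjecture by an Elliott-type approximate intertwining argument, reducing the classification of unitaries to a uniqueness statement at the level of the building blocks — one-dimensional $\NCCW$ complexes, or homogeneous $\CatCa$-algebras with one-dimensional spectrum in the $\AH_1$-case. Since the functor $\Cu_{\overline{\K}^{\alg}_1}$ is continuous (as established above), the invariant of the inductive limit is the inductive limit of the invariants, so it suffices to control the finite stages and then assemble a limit, tracking how the $\K_1$-data and the de la Harpe--Skandalis determinant transform under the connecting maps.

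First I would make precise the equivalence relation being classified: two unitaries $u,v\in A$ are to be shown approximately unitarily equivalent (i.e.\ $\lim_n w_n u w_n^*=v$ for unitaries $w_n\in A$) if and only if they determine the same element of $\Cu_{\overline{\K}^{\alg}_1}(A)$. Since $A$ is unital, the Cuntz component attached to a unitary is the class of $1_A$, so equality in the invariant reduces to equality of the $\overline{\K}^{\alg}_1$-data, namely agreement of the $\K_1$-classes together with agreement of the de la Harpe--Skandalis determinants. The ``only if'' direction is the soft one: approximate unitary equivalence is preserved by any continuous functor, and both the $\K_1$-class and the determinant are unitary invariants by construction.

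The substantive ``if'' direction would proceed via the building-block structure. Writing $A=\varinjlim(A_i,\phi_i)$ with each $A_i$ a direct sum of one-dimensional $\NCCW$ complexes, I would first reduce to unitaries coming from a finite stage, since every unitary in $A$ is a norm-limit of images of unitaries from some $A_i$. The heart of the matter is then a uniqueness statement: if $u,v\in A_i$ have equal images in $\Cu_{\overline{\K}^{\alg}_1}(A)$, then for $j\gg i$ the unitaries $\phi_{ij}(u)$ and $\phi_{ij}(v)$ are approximately unitarily equivalent in $A_j$. Because $\dim X\le 1$, the unitary group of each summand $C(X,M_n)$ is well understood: its $\K_1$ is computed by winding numbers, the connected component of the identity consists exactly of the unitaries with trivial $\K_1$-class, and on that component the de la Harpe--Skandalis determinant, valued in $\Aff T(A_i)/\overline{\rho_{A_i}(\K_0(A_i))}$, is a complete invariant modulo the closure of the commutator subgroup. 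The torsion-freeness of $\K_1$ is used here to guarantee that the algebraic $\K_1$ injects as needed and that Hausdorffization does not collapse classes one must separate.

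The main obstacle I expect is this uniqueness step, specifically controlling the de la Harpe--Skandalis determinant along the connecting maps. Matching the discrete $\K_1$-data is essentially homotopy-theoretic and standard in dimension one, but the determinant is a continuous, trace-dependent quantity living in a quotient of $\Aff T(A_i)$ by the closure of $\rho(\K_0)$, and one must show that two unitaries whose determinants agree can be conjugated arbitrarily closely while keeping track of how the trace simplex and the subgroup $\overline{\rho(\K_0(A_i))}$ deform under $\phi_{ij}$. The delicate point is that the determinant is only defined modulo $\overline{\rho(\K_0(A_i))}$, so the intertwining must be arranged to respect this indeterminacy uniformly across stages; reconciling the metric on $\Cus$-morphisms with the sup-norm control needed for approximate unitary equivalence of the determinants is where the argument is likely to be hardest, and where the one-dimensionality of the spectrum and the torsion-free hypothesis must be used most carefully.
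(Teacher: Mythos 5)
This statement is posed in the paper as an open conjecture: the paper offers no proof of it, and indeed explicitly defers it to ``a future investigation.'' So the relevant question is whether your proposal closes the gap, and it does not; it is a strategy outline whose two essential ingredients are asserted rather than established.

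First, you have misidentified what the invariant of a unitary is. Classifying unitary elements by the functor $\Cu_{\overline{\K}^{\alg}_1}$ means that $u\sim v$ if and only if the induced $\Cus$-morphisms $\Cu_{\overline{\K}^{\alg}_1}(C(\T))\longrightarrow\Cu_{\overline{\K}^{\alg}_1}(A)$ (equivalently, the data attached to $C^*(u,1_A)\hookrightarrow A$) agree; this records the spectrum of the unitary and the full ideal data carried by the Cuntz-semigroup component, not merely the pair consisting of the $\K_1$-class and the de la Harpe--Skandalis determinant. Already in the $\AF$ case, which is the one settled case cited in the paper, the classification is by the Cuntz semigroup of the pair and not by $\K$-theoretic data of $A$ alone; collapsing the invariant to ``same $\K_1$-class and same determinant'' is false for non-simple $A$ (two unitaries can generate different ideals, or have different spectra, while agreeing on both). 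Second, the heart of your argument --- the uniqueness statement that unitaries in a one-dimensional $\NCCW$ summand with matching invariant become approximately unitarily equivalent after pushing far enough along the inductive system --- is exactly the open content of the conjecture, and you state it without proof. You correctly flag that controlling the determinant, valued in $\Aff T(A_i)/\overline{\rho(\K_0(A_i))}$, along the connecting maps is the hard point, but identifying the obstacle is not the same as overcoming it. Moreover, the assembly step you invoke (pulling equality in the limit invariant back to approximate equality at finite stages and running an intertwining) requires an approximate intertwining theorem in the category $\Cus$ compatible with the $\Cus$-semimetric; the paper explicitly lists the development of such a theorem as future work, so this tool cannot be taken for granted. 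As it stands, the proposal is a plausible roadmap consistent with how one would expect the conjecture to be attacked, but it does not constitute a proof.
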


\textbf{Acknowledgments.} 
The author would like to thank the people of the Czech Academy of Sciences for their warm hospitality, flexibility and mathematical versatility, providing a great working environment. He is also grateful to the referee for her/his comments.

\section{Group Systems}
We first define a categorical context which allows to merge a partially ordered set $S$ with a functor $G:S\longrightarrow \AbGp$, where $\AbGp$ denotes the category of abelian groups. Roughly speaking, $S$ that will act as a \textquoteleft base space\textquoteright\ and the functor $G:S\longrightarrow \AbGp$ will assign a \textquoteleft fiber\textquoteright\ $G(s)\in \AbGp$ to each point $s\in S$. The partial order on $S$ gives a relation between the fibers via group morphisms encapsulated within the functoriality of $G$. This \textquoteleft web\textquoteright\ of abelian groups and morphisms is referred to as a \emph{system of group over $S$}. Subsequently, we are able to construct a partially ordered set $S_G$ whose elements are pairs $(s,g)$ where $s\in S$ and $g\in G(s)$, via the \emph{webbing transformation}. In addition, most of the extra-structures and extra-properties, such as a sum or order-theoretic axioms, pass from $S$ to $S_G$. This allows us to define 3 categories of group systems: the category of partially ordered group systems, the category of pomonoid group systems and finally the category of Cuntz group systems.

\subsection{Categories of group systems and webbing functors}

We first introduce the general notion of \emph{systems of groups}. We recall that a partially ordered set $S$ can be viewed as a small category, also written $S$, whose objects are the elements $s$ of $S$ and the arrows are induced by the order. In other words, there is an arrow from $s$ to $t$ that we write $s\overset{\leq}\longrightarrow t$ if (and only if) $s\leq t$ in $S$.

\begin{dfn}
Let $(S,\leq)$ be a partially ordered set. 

(i) For each $s$, we consider an (abelian) group $G_s$ associated to $s$.

(ii) For each $s,t\in S$ with $s\leq t$, we consider a group morphism $\phi_{st}:G_s\longrightarrow G_t$ with the convention that $\phi_{ss}=id_{G_s}$.

The pair $(\{G_s\},\{\phi_{st}\})_S$ is called a \emph{system of groups over $S$}. \\
Note that a system of groups over $S$ can be seen as the image $G(S)$ of a functor $G:S\longrightarrow \AbGp$ that sends $s\longmapsto G_s$ and $(s\overset{\leq}\longrightarrow t)\longmapsto \phi_{st}$.
\end{dfn}

$\hspace{-0,34cm}\bullet\,\,\textbf{The category}\PoSG \textbf{of partially ordered group systems}$. We denote the category of partially ordered sets by $\PoS$. We focus on the case where the base space $S\in\PoS$.

A \emph{partially ordered group system} is a pair $(S,G)$ where $S\in \PoS$ and $G\in\Fun(S,\AbGp)$.
 
An \emph{order-preserving map} between partially ordered group systems is a pair $(\alpha,\eta):(S,G)\longrightarrow (T,H)$ where $\alpha:S\longrightarrow T\in\Hom_{\PoS}(S,T)$ and $\eta:G\Rightarrow H\circ\alpha$ is a natural transformation. (Remark that $\alpha$ is viewed as a functor between the small categories $S$ and $T$.)
We now define the category of partially ordered group systems as follows.

\begin{dfn}
The category $\PoSG$ of \emph{ordered group systems} consists of partially ordered group systems together with order-preserving maps, where the composition of morphism is given as follows.

Let $(\alpha,\eta):(S,G)\longrightarrow (T,H)$ and $(\beta,\nu):(T,H)\longrightarrow (V,J)$ be $\PoSG$-morphisms. Observe that $\nu$ naturally induces a natural transformation $\nu_\alpha:H\circ\alpha\longrightarrow J\circ \beta\circ\alpha$ given by $(\nu_\alpha)_s:=\nu_{\alpha(s)}$. We now define $(\beta,\nu)\circ(\alpha,\eta):=(\beta\circ\alpha, \nu_\alpha\circ \eta)$, where $\nu_\alpha\circ \eta$ is the usual vertical composition of transformations. We sum up this composition of transformation in the following commutative diagram:
\[
\xymatrix@!R@!C@R=40pt@C=0pt{
G(s)\ar@{-->}@/^{-3pc}/[dd]_{(\nu_\alpha\circ \eta)_s}\ar[rr]_{G(\leq)}\ar[d]^{\eta_s} && G(t)\ar@{-->}@/_{-3pc}/[dd]^{(\nu_\alpha\circ \eta)_t}\ar[d]_{\eta_t} \\
H\circ\alpha(s)\ar[rr]_{H\circ\alpha(\leq)}\ar[d]^{\nu_{\alpha(s)}} && H\circ\alpha(t)\ar[d]_{\nu_{\alpha(t)}}\\
J\circ\beta\circ\alpha(s)\ar[rr]_{J\circ\beta\circ\alpha(\leq)}&& J\circ\beta\circ\alpha(t)
}
\]
where $\leq$ stands for the (unique) $S$-morphism $s\overset{\leq}\longrightarrow t$. 
\end{dfn}

We mention that the above terminology is somewhat inspired by the notion of slice categories.
Now that we have a proper categorical context, from an ordered system of group $(S,G)$ we wish to build a poset $S_G$ which is based on $S$ but also enriched with the extra-data contained in $G$. Namely, let $(S,G)\in\PoSG$. We define 
\[
S_G:=\{(s,g)\mid s\in S, g\in G(s)\}
\]
We equip $S_G$ with the following partial order:  
\[(s,g)\leq (t,h) \text{ if, } s\leq t \text{ and } G(s\leq t)(g)=h.
\]
Let $(S,G)\overset{(\alpha,\eta)}\longrightarrow (T,H)$ be a $\PoSG$-morphism. We define
 \[
\begin{array}{ll}
\alpha_\eta:S_G\longrightarrow T_H\\
\hspace{0,35cm} (s,g)\longmapsto (\alpha(s),\eta_s(g))
\end{array}
\]
The above constructions allow us to define a \emph{webbing functor}

\begin{prop}
The assignment \[
\begin{array}{ll}
 \mathfrak{w}: \PoSG\longrightarrow \PoS \\
\hspace{1,05cm} (S,G)\longmapsto S_G\\
		\hspace{1,15cm}(\alpha,\eta) \longmapsto \alpha_{\eta}
\end{array}
\] 
is a well-defined functor called the webbing functor. 
\end{prop}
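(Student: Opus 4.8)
The plan is to verify, in turn, the four conditions that make $\mathfrak{w}$ a functor: that each $S_G$ is a genuine object of $\PoS$; that each $\alpha_\eta$ is a genuine $\PoS$-morphism; and that $\mathfrak{w}$ respects identities and composition. All four reduce to bookkeeping with the functoriality of $G$ and the naturality of $\eta$, so I will treat them as a sequence of direct checks rather than a single argument.

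First I would check that the relation placed on $S_G$ is a partial order. Reflexivity and antisymmetry follow from the convention $G(s\leq s)=\id_{G(s)}$ together with the poset axioms of $S$: if $(s,g)\leq(t,h)$ and $(t,h)\leq(s,g)$, then $s=t$ in $S$, whence $g=G(s\leq s)(g)=h$, so $(s,g)=(t,h)$. Transitivity is the one point that genuinely uses that $G$ is a \emph{functor} rather than a mere assignment of groups to points: if $G(s\leq t)(g)=h$ and $G(t\leq u)(h)=k$, then the functorial identity $G(s\leq u)=G(t\leq u)\circ G(s\leq t)$ yields $G(s\leq u)(g)=k$, so $(s,g)\leq(u,k)$. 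Thus $S_G\in\PoS$.

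Next I would verify that $\alpha_\eta$ is a well-defined order-preserving map. Well-definedness is immediate from the typing of the natural transformation $\eta\colon G\Rightarrow H\circ\alpha$, whose component $\eta_s\colon G(s)\longrightarrow H(\alpha(s))$ ensures $\eta_s(g)\in H(\alpha(s))$, so that $(\alpha(s),\eta_s(g))\in T_H$. Order-preservation is the crux of the argument and is exactly what naturality is for: given $(s,g)\leq(t,h)$, that is $s\leq t$ and $G(s\leq t)(g)=h$, the naturality square of $\eta$ evaluated at the arrow $s\overset{\leq}\longrightarrow t$ reads $H(\alpha(s)\leq\alpha(t))\circ\eta_s=\eta_t\circ G(s\leq t)$. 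Applying both sides to $g$ gives $H(\alpha(s)\leq\alpha(t))(\eta_s(g))=\eta_t(h)$, which together with $\alpha(s)\leq\alpha(t)$ (order-preservation of $\alpha$) is precisely the statement $\alpha_\eta(s,g)\leq\alpha_\eta(t,h)$ in $T_H$.

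Finally I would confirm functoriality. For identities, the $\PoSG$-identity on $(S,G)$ is $(\id_S,\id_G)$ with $(\id_G)_s=\id_{G(s)}$, so $(\id_S)_{\id_G}(s,g)=(s,g)$ and $\mathfrak{w}(\id_{(S,G)})=\id_{S_G}$. For composition, I would unwind both sides on a generic element $(s,g)$: using the description $(\nu_\alpha\circ\eta)_s=\nu_{\alpha(s)}\circ\eta_s$ of the composite transformation recorded in the definition, the left-hand side $(\beta\circ\alpha)_{\nu_\alpha\circ\eta}(s,g)=(\beta(\alpha(s)),\nu_{\alpha(s)}(\eta_s(g)))$ matches the right-hand side $\beta_\nu(\alpha_\eta(s,g))=\beta_\nu(\alpha(s),\eta_s(g))=(\beta(\alpha(s)),\nu_{\alpha(s)}(\eta_s(g)))$ on the nose. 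Since every step is a direct computation, I expect no real obstacle; the only place demanding care is the order-preservation of $\alpha_\eta$, where one must invoke the naturality of $\eta$ at the correct arrow — this is the conceptual heart of why the webbing construction is functorial at all.
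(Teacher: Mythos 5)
Your proof is correct and follows essentially the same approach as the paper: the composition check on a generic element $(s,g)$ is the identical computation, and the poset and order-preservation verifications you spell out (via functoriality of $G$ and naturality of $\eta$) are precisely the parts the paper declares straightforward and leaves to the reader.
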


\begin{proof}
The fact that $S_G$ is a partially ordered set and $\alpha_\eta$ is an order-preserving map is straightforward and left to the reader. We have to prove that $\mathfrak{w}$ preserves the composition of morphisms. 
Let $(S,G)\overset{(\alpha,\eta)}\longrightarrow (T,H)\overset{(\beta,\nu)}\longrightarrow (V,J)$ be $\PoSG$-morphisms. It is enough to show that $\beta_\nu\circ\alpha_\eta=(\beta\circ\alpha)_{(\nu_\alpha\circ\eta)}$. 
Let $(s,g)\in S_G$. We compute that 
\begin{align*}
\beta_\nu\circ\alpha_\eta(s,g)&=\beta_\nu(\alpha(s),\eta_s(g))\\
&=(\beta\circ\alpha(s),\nu_{\alpha(s)}(\eta_s(g))\\
&=(\beta\circ\alpha(s),(\nu_\alpha\circ\eta)_s(g))\\
&=(\beta\circ\alpha)_{(\nu_\alpha\circ\eta)}(s,g).
\end{align*}
\end{proof}

$\hspace{-0,34cm}\bullet\,\,\textbf{The category}\PoMG \textbf{of pomonoid group systems and the category} \oMs$. 
We denote the category of positively ordered monoid (pomonoids in short) by $\PoM$.
We focus on the case where the base space $S\in\PoM$ and investigate to what extent the monoid structure passes onto $S_G$. Mimicking the above, we define the following.

A \emph{pomonoid group system} is a pair $(S,G)$ where $S\in\PoM$ and $G\in\Fun(S,\AbGp)$ such that $G(0_S)\simeq\{e\}$.

A \emph{pomonoid group system map} is a pair $(\alpha,\eta):(S,G)\longrightarrow (T,H)$ where $\alpha:S\longrightarrow T\in\Hom_{\PoM}(S,T)$ and $\eta:G\Rightarrow H\circ\alpha$ is a natural transformation. 

Here again, $S$ and $T$ are viewed as small categories and $\alpha$ as a functor. Also, observe that that we have required the extra-condition $G(0_S)\simeq\{e\}$. (For technical reasons.)

\begin{dfn}
The category $\PoMG$ of \emph{pomonoid group systems} consists of pomonoid group systems and pomonoid group system maps, where the composition of morphisms is given as before.
\end{dfn}

Let $(S,G)\in \PoMG$. We equip the partially ordered set $S_G$ with the following sum 
\[(s,g)+(t,h):= (s+t, G(s\leq s+t)(g)+G(t\leq s+t)(h)).\]
\begin{rmk}
Positiveness  of the partial order in $S$ is crucial. More specifically, without positiveness one cannot ensure that $s\leq s+t$ (respectively $t\leq s+t$) and the above sum may not be well-defined on $S_G$. 
\end{rmk}
The next proposition shows that $S_G$ is an ordered monoid whenever $S$ is a pomonoid. It is worth mentioning that $S_G$ is not a pomonoid in general, but the partial order on $S_G$ still satisfies a weak form of positiveness. More specifically, $S_G$ satisfies axioms (PC) and (PD) introduced in \cite{C21b} (which roughly state that there are no negative elements and any non-positive element has a \textquoteleft symmetric\textquoteright\ with respect to the positive cone) together with an axiom (S0).

\begin{dfn}\cite[Definition 3.3]{C21b}
Let $S$ be an ordered monoid. We define the following axioms.

(PD) We say that $S$ is \emph{positively directed} if, for any $s\in S$, there exists $p_s\in S$ such that $s+p_s\geq 0$.

(PC) We say that $S$ is \emph{positively convex} if, for any $s,t\in S$ such that $t\geq 0$ and $s\leq t$, we have $s+t\geq 0$.

(S0) We say that $S$ is \emph{singular at the origin} if, $s+t= 0$ implies $s=t=0$.
\end{dfn}

\begin{prop}
Let $(S,G)\in \PoMG$. Then 

(i) $S_G$ is an ordered monoid.

(ii) $S_G$ satisfies axioms (PC), (PD) and (S0).
\end{prop}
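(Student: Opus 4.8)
The plan is to verify the ordered-monoid axioms for $S_G$ directly from the definitions, leaning on the positivity of the order in $S$ (as flagged in the Remark) and on the functoriality of $G$, and then to check (PC), (PD), (S0) one at a time by transporting the corresponding facts from $S$ through the fibers.

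For part (i), I would first confirm that the proposed sum is well-defined and lands in $S_G$: since $s\leq s+t$ and $t\leq s+t$ by positivity of $S$, the morphisms $G(s\leq s+t)$ and $G(t\leq s+t)$ exist, so the second coordinate $G(s\leq s+t)(g)+G(t\leq s+t)(h)$ is a bona fide element of $G(s+t)$. The neutral element is $(0_S,e)$, where I use the hypothesis $G(0_S)\simeq\{e\}$ together with the convention $\phi_{0_S,s}=G(0_S\leq s)$ to check $(0_S,e)+(s,g)=(s,g)$; here functoriality $G(0_S\leq s)\circ G(0_S\leq 0_S)=G(0_S\leq s)$ and the fact that $G(0_S\leq s)$ sends $e$ to the identity of $G(s)$ give the first coordinate $s$ and second coordinate $G(s\leq s)(g)+ (\text{identity})=g$. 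Commutativity of the sum is immediate from commutativity in $S$ and in each abelian group $G(s+t)$. The main calculation is associativity: I would compute both $((s,g)+(t,h))+(u,k)$ and $(s,g)+((t,h)+(u,k))$, observe that both have first coordinate $s+t+u$, and then match the second coordinates. The point is that each of $g,h,k$ gets pushed forward to $G(s+t+u)$ along the unique chain of comparisons (e.g. $g$ travels via $s\leq s+t\leq s+t+u$ in one bracketing and via $s\leq s+t+u$ in the other), and functoriality $G(a\leq c)=G(b\leq c)\circ G(a\leq b)$ collapses these to the same morphism; after this all three images live in the single abelian group $G(s+t+u)$ and associativity of its addition finishes the argument. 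Finally I would verify that the sum respects the order: if $(s,g)\leq(s',g')$ and $(t,h)\leq(t',h')$ then $s+t\leq s'+t'$ in $S$, and a short diagram chase using functoriality shows $G(s+t\leq s'+t')$ sends the second coordinate of the first sum to that of the second.

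For part (ii), I would handle the three axioms separately. For (PD), given $(s,g)\in S_G$, I would pick $p_s\in S$ with $s+p_s\geq 0_S$ from positive-directedness of $S$, and produce a symmetric fiber element: take $(p_s,h)$ where $h:=-G(s\leq s+p_s)(g)$ pushed appropriately, arranged so that $(s,g)+(p_s,h)$ has second coordinate $0$ in $G(s+p_s)$ and hence is $\geq(0_S,e)$ under the order on $S_G$ (using that $(0_S,e)\leq (s+p_s, 0)$ precisely because $s+p_s\geq 0_S$ and $G(0_S\leq s+p_s)(e)=0$). For (PC), suppose $(t,h)\geq(0_S,e)$ and $(s,g)\leq(t,h)$; I would extract $s\leq t$ and $t\geq 0_S$ in $S$, apply (PC) for $S$ to get $s+t\geq 0_S$, and then verify the fiber condition, namely that $G(0_S\leq s+t)(e)=0$ equals the second coordinate of $(s,g)+(t,h)$ after pushing to $G(s+t)$; the comparison $(s,g)\leq (t,h)$ forces $G(s\leq t)(g)=h$, which is exactly what makes the two pushed-forward contributions cancel or combine correctly. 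For (S0), I would observe that $(s,g)+(t,h)=(0_S,e)$ forces $s+t=0_S$ in the first coordinate, whence $s=t=0_S$ by (S0) for $S$; then $G(0_S)\simeq\{e\}$ forces $g=h=e$, giving $(s,g)=(t,h)=(0_S,e)$.

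The routine points are well-definedness, commutativity, and (S0), which are essentially immediate. I expect the main obstacle to be the bookkeeping in associativity and in the order-compatibility of the sum: the subtlety is to confirm that the various composite morphisms $G(\cdot\leq\cdot)$ arising from different bracketings genuinely coincide, which is guaranteed by functoriality but must be tracked carefully since the same fiber element is transported along different comparison chains before being added inside the common target group $G(s+t+u)$. Once everything is pushed into a single abelian group, all identities reduce to the group axioms there, so the real content is organizing the functoriality rewrites cleanly rather than any deep argument.
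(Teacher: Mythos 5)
Your overall strategy matches the paper's: push every fiber element into the common group $G(s+t)$ (or $G(s+t+u)$) via functoriality and use positivity of the order on $S$ throughout. Part (i), (PC) and (S0) are correct, and in (i) you are actually more thorough than the paper, which only spells out the neutral element and the order--sum compatibility and leaves associativity to the reader. Two small attributions are off: in (S0) you invoke ``(S0) for $S$'' and in (PD) ``positive-directedness of $S$'', but $S$ is merely a pomonoid and neither is an assumed axiom; both facts follow from positivity of the order (e.g.\ $s+t=0$ gives $s\leq s+t=0$ and $s\geq 0$, hence $s=0$).

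The one step that does not work as written is (PD). You form $(p_s,h)$ with $h:=-G(s\leq s+p_s)(g)$ ``pushed appropriately'', but $-G(s\leq s+p_s)(g)$ lives in $G(s+p_s)$, whereas the second coordinate of $(p_s,h)$ must lie in $G(p_s)$; there is no map $G(s+p_s)\to G(p_s)$ available, and for an arbitrary $p_s$ the element $-G(s\leq s+p_s)(g)$ need not lie in the image of $G(p_s\leq s+p_s)$, so the required $h$ may simply not exist. The fix is to make the specific choice $p_s=s$ and $h=-g\in G(s)$: then $(s,g)+(s,-g)=\bigl(2s,\,G(s\leq 2s)(g)-G(s\leq 2s)(g)\bigr)=(2s,e_{G(2s)})\geq (0,e)$, which is exactly the paper's argument. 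With that substitution your proof goes through and coincides with the paper's.
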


\begin{proof}
(i) First, it can be observed that $(0,e)$ is the neutral element of $S_G$. Next, we have to check that the order and the sum are compatible. Let $(s,g),(t,h)$ and $(s',g'),(t',h')$ be such that $(s,g)\leq(t,h)$ and $(s',g')\leq(t',h')$. Then $s+s'\leq t+t'$ in $S$. Moreover, by the functoriality of $G$, we know that $G(s+s'\leq t+t')\circ G(s\leq s+s')=G(t\leq t+t')\circ G(s\leq t)$, respectively for $s'$ and $t'$. This allows us to compute that $G(s\leq t+t')(g)+G(s'\leq t+t')(g')=G(t\leq t+t')(h)+G(t'\leq t+t')(h')$, from which the result follows.

(ii) (PD) Let $(s,g)\in S$. Then $(s,g)+(s,-g)=(2s,e_{G(2s)})\geq (0,e)$.

(PC) Let $(s,g),(t,h)$ be such that $(s,g)\leq(t,h)$ and $(0,e)\leq (t,h)$. The second inequality implies that $h=e_{G(t)}$. Now, observing that $G(t\leq s+t)\circ G(s\leq t)=G(s\leq s+t)$, we deduce that $G(s\leq s+t)(g)=G(t\leq s+t)(e_{G(t)})$ and hence, that $(s,g)+(t,h)=(s+t,e_{G(s+t)})$.

(S0) Let $(s,g),(t,h)$ be such that $(s,g)+(t,h)=(0,e)$. Then $s+t=0$ in $S$ which implies that $s=t=0$. Now (S0) follows from the fact that $G(0_S)$ is the trivial group. 
\end{proof}

As for partially ordered groups, we wish to build a suitable webbing functor for pomonoid group systems. Nevertheless, we have just pointed out that the webbing transformation turns a pomonoid group system into an ordered monoid which only have a weak form of positiveness. Thus, we first have to introduce the following category located between $\PoM$ and $\oM$, where $\oM$ denotes the category of ordered monoids.

\begin{dfn}
The category $\oM^*$ is the full subcategory of $\oM$ whose objects are ordered monoids satisfying axioms (PC), (PD) and (S0).
\end{dfn}

We now obtain a webbing functor for the category $\PoMG$ by restriction as follows.
\begin{prop}
The webbing functor $\mathfrak{w}: \PoMG\longrightarrow \oMs$ is well-defined.
\end{prop}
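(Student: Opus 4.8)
The plan is to verify that the webbing functor $\mathfrak{w}: \PoMG\longrightarrow \oMs$ is well-defined by reducing almost everything to results already established for $\mathfrak{w}: \PoSG\longrightarrow \PoS$, and then checking that the extra monoid structure is respected. Since $\PoMG$ is built on the same underlying data as $\PoSG$ (a pair $(S,G)$, now with $S\in\PoM$ and $G(0_S)\simeq\{e\}$), and since a $\PoMG$-morphism is in particular a $\PoSG$-morphism, the earlier Proposition already tells us that on the level of underlying posets the assignment $(S,G)\longmapsto S_G$ and $(\alpha,\eta)\longmapsto\alpha_\eta$ is functorial: $S_G$ is a well-defined poset and $\mathfrak{w}$ preserves composition and identities. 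The only new content is therefore (a) that the codomain is correctly $\oMs$ rather than $\PoS$, i.e. each $S_G$ genuinely lands in $\oMs$, and (b) that each $\alpha_\eta$ is an $\oMs$-morphism, i.e. a monoid morphism (and not merely order-preserving).

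First I would dispatch (a) immediately: the preceding Proposition shows that for $(S,G)\in\PoMG$ the set $S_G$ is an ordered monoid satisfying (PC), (PD) and (S0), which is exactly the defining condition for membership in the full subcategory $\oMs$ of $\oM$. So no work is needed beyond invoking that result. The substance of the proof lies in (b): I must check that for a $\PoMG$-morphism $(\alpha,\eta):(S,G)\longrightarrow(T,H)$ the induced map $\alpha_\eta(s,g)=(\alpha(s),\eta_s(g))$ preserves the sum on $S_G$ defined by $(s,g)+(t,h)=(s+t,\,G(s\leq s+t)(g)+G(t\leq s+t)(h))$. Here I would compute both $\alpha_\eta\big((s,g)+(t,h)\big)$ and $\alpha_\eta(s,g)+\alpha_\eta(t,h)$ and compare their two coordinates. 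The first coordinates agree because $\alpha$ is a $\PoM$-morphism, hence additive: $\alpha(s+t)=\alpha(s)+\alpha(t)$. For the second coordinates the key ingredient is the naturality of $\eta:G\Rightarrow H\circ\alpha$, which gives the commuting square $\eta_{s+t}\circ G(s\leq s+t)=H(\alpha(s)\leq\alpha(s+t))\circ\eta_s$ (and likewise for $t$), together with the fact that each $\eta_{s+t}$ is a group morphism so that it distributes over the sum in $G(s+t)$. Combining these, $\eta_{s+t}\big(G(s\leq s+t)(g)+G(t\leq s+t)(h)\big)$ rewrites as $H(\alpha(s)\leq\alpha(s)+\alpha(t))(\eta_s(g))+H(\alpha(t)\leq\alpha(s)+\alpha(t))(\eta_t(h))$, which is precisely the second coordinate of $\alpha_\eta(s,g)+\alpha_\eta(t,h)$. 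I would also note that $\alpha_\eta(0,e)=(\alpha(0),\eta_0(e))=(0,e)$ since $\alpha$ preserves the neutral element and $\eta_0$ is a group morphism, so the neutral element is preserved.

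The main (and in fact only mild) obstacle I anticipate is bookkeeping: keeping the comparison maps $G(s\leq s+t)$ and $H(\alpha(s)\leq\alpha(s)+\alpha(t))$ straight and making sure the naturality square is applied with the correct order relation, using that $S\in\PoM$ is positively ordered so that $s\leq s+t$ and $t\leq s+t$ hold and the sum is genuinely defined (exactly the point flagged in the Remark). Once the naturality square and the additivity of each $\eta_s$ are in place, the computation is a short two-coordinate chase with no genuine difficulty. It is worth remarking that the condition $G(0_S)\simeq\{e\}$, although essential for $S_G\in\oMs$ via axiom (S0) in the previous Proposition, plays no independent role in verifying functoriality here, so I would not need to invoke it again beyond citing that Proposition for part (a).
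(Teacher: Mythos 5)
Your proposal is correct and matches the paper's approach: the paper's proof simply defers the verification that $\alpha_\eta$ is a monoid morphism to the reader, relying (as you do) on the preceding proposition for the fact that $S_G\in\oMs$ and on the earlier webbing functor proposition for functoriality. Your two-coordinate computation via the naturality square $\eta_{s+t}\circ G(s\leq s+t)=H(\alpha(s)\leq\alpha(s)+\alpha(t))\circ\eta_s$ and the additivity of $\eta_{s+t}$ is exactly the check being left implicit.
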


\begin{proof}
We leave the reader to check that $\alpha_\eta$ is a monoid morphism.
\end{proof}

$\hspace{-0,34cm}\bullet\,\,\textbf{The category}\CuG \textbf{of Cuntz group systems and the category}\Cus$. 
We denote the category of abstract Cuntz semigroups by $\Cu$.
Recall that the \emph{Cuntz semigroup} was first introduced in \cite{C78}. We refer the reader to \cite{GP23} for a complete survey about Cuntz semigroups where all the basics can be found. Another fundamental and complete paper can be found in \cite{APT18}. Finally, categorical aspects of the category $\Cu$ have been developed in \cite{APT20}, \cite{APT20b} and state-of-the-art results in \cite{APRT22}, \cite{T20}. Let us first briefly recall order-theoretic axioms satisfied by a $\Cu$-semigroup. 
\begin{prg}
Let $(S,\leq)$ be an ordered monoid and let $x,y$ in $S$. We say that $x$ is \emph{way-below} $y$ and we write $x\ll y$ if, for all increasing sequences $(z_n)_{n\in\N}$ in $S$ that have a supremum, if $\sup\limits_{n\in\N} z_n\geq y$, then there exists $k$ such that $z_k\geq x$. This is an auxiliary relation on $S$ called the \emph{way-below relation} or the \emph{compact-containment relation}. In particular $x\ll y$ implies $x\leq y$ and we say that $x$ is a \emph{compact element} whenever $x\ll x$. 

We say that $S$ is an \emph{abstract Cuntz semigroup}, or a $\Cu$-semigroup, if $S$ satisfies the following order-theoretic axioms: 

(O1) Every increasing sequence of elements in $S$ has a supremum. 

(O2) For any $x\in S$, there exists a $\ll$-increasing sequence $(x_n)_{n\in\N}$ in $S$ such that $\sup\limits_{n\in\N} x_n= x$.

(O3) Addition and the compact containment relation are compatible.

(O4) Addition and suprema of increasing sequences are compatible.

We say that a map $\alpha:S\longrightarrow T$ is a $\Cu$-morphism if $\alpha$ is an ordered monoid morphism preserving the compact-containment relation and suprema of increasing sequences.

\end{prg}

 We focus on the case where the base space $S\in \Cu$ and investigate to what extent the order-theoretic axioms pass onto $S_G$. Mimicking the above, we define the following.

A \emph{Cuntz group system} is a pair $(S,G)$ where $S\in\Cu$ and $G\in\Fun(S,\AbGp)$ such that $G$ preserves inductive limits and $G(0_s)\simeq\{e\}$.  

A \emph{Cuntz group system map} is a pair $(\alpha,\eta):(S,G)\longrightarrow (T,H)$ where $\alpha:S\longrightarrow T\in\Hom_{\Cu}(S,T)$ and $\eta:G\Rightarrow H\circ\alpha$ is a natural transformation. 

Here again, $S$ and $T$ are viewed as small categories and $\alpha$ as a functor. Also, observe that we have required the extra-condition that $G$ preserves inductive limits. (For technical reasons.)
 
 \begin{dfn}
The category $\CuG$ of \emph{Cuntz group systems} consists of Cuntz group systems and Cuntz group system maps, where the composition of morphisms is given as before.
\end{dfn}

We aim to prove that the webbing transformation gives us an ordered monoid $S_G$ satisfying the order-theoretic axioms (O1)-...-(O4) whenever $S\in \Cu$. First, we observe in the following lemma that the compact-containment relation in $S_G$ is entirely determined by the one in $S$.

\begin{prop}
\label{prop:cc}
Let $(s,g),(t,h)\in S_G$. Then the following are equivalent:

(i) $(s,g)\ll(t,h)$.

(ii)  $s\ll t$ and $(s,g)\leq(t,h)$.

(iii) $s\ll t$ and  $G(s\leq t)(g)=h$.
\end{prop}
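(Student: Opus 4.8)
The plan is to prove the chain of equivalences $(i)\Rightarrow(ii)\Rightarrow(iii)\Rightarrow(i)$, exploiting the fact that the order on $S_G$ is essentially the order on $S$ together with a rigid compatibility condition between the group-elements. The equivalence of $(ii)$ and $(iii)$ is immediate: by the very definition of the partial order on $S_G$, the statement $(s,g)\leq(t,h)$ unpacks to \textquoteleft $s\leq t$ and $G(s\leq t)(g)=h$\textquoteright, and since $s\ll t$ already forces $s\leq t$, conditions $(ii)$ and $(iii)$ say literally the same thing. So the real content is to relate $\ll$ in $S_G$ to $\ll$ in $S$.

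For the direction $(i)\Rightarrow(ii)$, suppose $(s,g)\ll(t,h)$. First I would show $s\ll t$ in $S$ by pushing an arbitrary $\ll$-increasing (or merely increasing) sequence $(z_n)$ in $S$ with $\sup z_n\geq t$ up to a sequence in $S_G$. The natural candidate is $(z_n, G(t\leq z_n')(h))$ for a suitable cofinal comparison, but the cleaner route is to lift using the sequence $\big(z_n\wedge$-type elements$\big)$; more precisely, since $\sup z_n \geq t$ and $G$ preserves inductive limits, one forms the elements $w_n:=(z_n, k_n)$ where $k_n$ is the image of $h$ (or of $g$) under the appropriate structure map, arranges that $(w_n)$ is increasing in $S_G$ with supremum dominating $(t,h)$, and then compactness of $(s,g)$ yields some $w_k\geq(s,g)$, whence $z_k\geq s$. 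This gives $s\ll t$. Simultaneously, $(s,g)\ll(t,h)$ implies $(s,g)\leq(t,h)$ because $\ll$ always refines $\leq$. That establishes $(ii)$.

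For $(iii)\Rightarrow(i)$, assume $s\ll t$ and $G(s\leq t)(g)=h$, and let $(t_n,h_n)$ be any increasing sequence in $S_G$ whose supremum dominates $(t,h)$. I would first identify what the supremum in $S_G$ is: since $G$ preserves inductive limits and the sequence is increasing, the first coordinates $t_n$ form an increasing sequence in $S$ with $\sup t_n =: t_\infty \geq t$, and the second coordinates $h_n$ are compatible under the structure maps, so the supremum is $(t_\infty, h_\infty)$ where $h_\infty$ is the image of the $h_n$ in $G(t_\infty)=\varinjlim G(t_n)$, and the domination $(t_\infty,h_\infty)\geq(t,h)$ means $t_\infty\geq t$ and $G(t\leq t_\infty)(h)=h_\infty$. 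Now $s\ll t\leq t_\infty$ gives some $k$ with $t_k\geq s$; I must then upgrade this to $(t_k,h_k)\geq(s,g)$, i.e. verify $G(s\leq t_k)(g)=h_k$. This follows by chasing the relevant commuting triangle of structure maps: using $G(s\leq t)(g)=h$, functoriality along $s\leq t_k$ (passing through the colimit), and the compatibility $G(t\leq t_\infty)(h)=h_\infty$ together with the definition of $h_k$ as the image of $h_\infty$-data.

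The main obstacle I anticipate is precisely this last coherence check in $(iii)\Rightarrow(i)$: matching $G(s\leq t_k)(g)$ with $h_k$ requires that the structure maps genuinely commute when one routes through both $t$ and the colimit $t_\infty$, and one must take care that the chosen index $k$ can be enlarged so that the triangle involving $s,t,t_k$ closes up inside $G(t_k)$ rather than only after passing to $G(t_\infty)$. This is where the hypothesis that $G$ preserves inductive limits does the essential work, since it guarantees that $\varinjlim G(t_n)\cong G(\sup t_n)$ and hence that compatibility in the colimit can be detected at a finite stage. I would handle this by first reducing to the case where the sequence $(t_n)$ is $\ll$-increasing (via (O2) applied in $S$), which makes the colimit description of $G(t_\infty)$ and the finite-stage detection transparent, and then performing the diagram chase.
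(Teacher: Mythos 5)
Your proposal follows essentially the same route as the paper's proof: (ii)$\Leftrightarrow$(iii) is definitional, (i)$\Rightarrow$(ii) is proved by lifting an increasing sequence of $S$ to one in $S_G$ (push $h$ forward to $G(\sup x_n)$, pull it back to a finite stage using that colimits in $\AbGp$ are algebraic, and take the tail sequence), and (iii)$\Rightarrow$(i) is the finite-stage detection of the identity $G(s\leq z)(g)=G(z_n\leq z)(l_n)$ inside $G(z)\simeq\varinjlim G(z_n)$. Two caveats on the last direction. First, you should actually prove, not just assert, that the first coordinate of the given $S_G$-supremum equals $\sup_S t_n$; the paper does this by noting that any upper bound $y$ of $(t_n)$ in $S$ yields the upper bound $(y,G(t_n\leq y)(h_n))$ of the sequence in $S_G$, which forces $z\leq y$. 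Second, your closing maneuver --- reducing to the case where $(t_n)$ is $\ll$-increasing via (O2) --- is both unnecessary and unavailable: unnecessary because the hypothesis that $G$ preserves inductive limits already applies to arbitrary increasing chains, so $G(t_\infty)\simeq\varinjlim G(t_n)$ and finite-stage detection hold as stated; unavailable because replacing $t_n$ by approximants $t_n'\ll t_n$ gives no way to lift the group components $h_n$ along $G(t_n'\leq t_n)$ (these maps need not be surjective), and (O2) for $S_G$ is only established afterwards, using the present proposition, so invoking it here risks circularity. Dropping that reduction, the direct diagram chase you describe is exactly the paper's argument.
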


\begin{proof}
(ii) implies (iii) is immediate. 

(i) implies (ii): Assume that $(s,g)\ll(t,h)$. Hence, for any increasing sequence $(z_n,l_n)_n$ in $S_G$ that has a supremum $(z,l)$ such that $(z,l)\geq (t,h)$, then $(z_n,l_n)\geq (s,g)$ for some $n\in\N$. In particular, $(s,g)\leq(t,h)$. To show that $s\ll t$, consider an increasing sequence $x_n$ in $S$ whose supremum $x\geq t$. The functor $G$ preserves inductive limits and hence induces the following inductive system in $\AbGp$ with limit $G(x)$.
\[
\xymatrix{
G(x_n)\ar[r]^{} &\dots\ar[r]^{} & G(x_m)\ar[r]^{}&\dots\ar[r]^{}  & G(x)
}
\]
whose limit is $(G(x),G(x_n\leq x))$. Further, limits in $\AbGp$ are algebraic and hence, for any element $k\in G(x)$, there exists some $k_n\in G(x_n)$ such that $k=G(x_n\leq x)(k_n)$. In particular, there exists some $k_m\in G(x_m)$ such that $k:=G(t\leq x)(h)=G(x_m\leq x)(k_m)$. Now define $(z_n,l_n):=(x_{m+n},G(x_m,x_{m+n})(k_m)$). We have constructed an increasing sequence $(z_n,l_n)_n$ in $S_G$ with supremum $(x,k)\geq (t,h)$. Applying the hypothesis, we deduce that $(z_n,l_n)\geq (s,x)$ for some $n\in\N$. A fortiori, $x_{m+n}=z_n\geq s$ in $S$ for some $n\in\N$.

(iii) implies (i): Let $(z_n,l_n)_n$ be an increasing sequence in $S_G$ that has a supremum $(z,l)$ such that $(z,l)\geq (t,h)$. A fortiori, $(z_n)_n$ is an increasing sequence in $S$ and $z\geq t$. In a first step, let us show that $z$ is the supremum of $(z_n)_n$ in $S$. It is clear that $z_n\leq z$ for all $n\in\N$. Let $y\in S$ be such that $z_n\leq y$ for all $n\in\N$. Consider $m:=G(z_n\leq y)(l_n)=G(z_m\leq y)(l_m)$. By construction, we have $(z_n,l_n)\leq (y,m)$ for all $n\in\N$ and hence, $(z,l)\leq (y,m)$. This implies that $z\leq y$ and we conclude that $z$ is the supremum of $(z_n)_n$.
Next, since $s\ll t\leq z$, we can find some $n\in\N$ such that $z_n\geq s$. Therefore, using again that $G$ preserves inductive limits, we obtain the following commutative diagram
\[
\xymatrix{
G(s)\ar[rrrr]^{}\ar[d]_{} & & & &G(t)\ar[d]^{}\\
G(z_n)\ar[r]^{} &\dots\ar[r]^{} & G(z_m)\ar[r]^{}&\dots\ar[r]^{}  & G(z)\\
}
\]
Thus $G(s\leq z)(g)=G(z_n\leq z)(l_n)$. Again, inductive limits in $\AbGp$ are algebraic, from which we deduce that there exists some $m\in\N$ such that $G(s\leq z_m)(g)=G(z_n\leq z_m)(l_n)=l_m$. This terminates the proof.
\end{proof}

We note that the construction done in the argument proving (i) implies (ii) a fortiori shows that $S_G$ satisfies axiom (O2). We give some more details in the next proof. 

\begin{prop}
Let $(S,G)\in \CuG$. Then $S_G$ satisfies order-theoretic axioms (O1)-...-(04) and its neutral element $(0,e)$ is compact. 
\end{prop}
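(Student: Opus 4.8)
The plan is to lean on the two facts already in hand: the explicit description of the way-below relation on $S_G$ given by \autoref{prop:cc}, and the standing hypothesis that $G$ preserves inductive limits, so that $G(\sup_n z_n)=\varinjlim_n G(z_n)$ as an \emph{algebraic} inductive limit in $\AbGp$. The crux is (O1); once suprema are described explicitly, the other axioms fall out formally. For (O1), I would start from an increasing sequence $(z_n,l_n)_n$ in $S_G$, so that $z_n\leq z_{n+1}$ and $G(z_n\leq z_{n+1})(l_n)=l_{n+1}$. Axiom (O1) in $S$ furnishes $z:=\sup_n z_n$, and since $G$ preserves inductive limits, $G(z)=\varinjlim_n G(z_n)$. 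The relations $G(z_n\leq z_{n+1})(l_n)=l_{n+1}$ say precisely that $(l_n)_n$ is a compatible family, hence determines $l\in G(z)$ with $l=G(z_n\leq z)(l_n)$ for every $n$. I would then verify that $(z,l)=\sup_n(z_n,l_n)$: it is an upper bound by construction, and if $(y,m)$ is any upper bound, then $z\leq y$ (as $z=\sup_n z_n$) while functoriality gives $G(z\leq y)(l)=G(z\leq y)\bigl(G(z_n\leq z)(l_n)\bigr)=G(z_n\leq y)(l_n)=m$, whence $(z,l)\leq(y,m)$.

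With this explicit form of suprema, (O2) is essentially the computation already carried out in the proof of (i)$\Rightarrow$(ii) of \autoref{prop:cc}. Given $(s,g)$, axiom (O2) in $S$ yields a $\ll$-increasing sequence $(s_n)_n$ with $s=\sup_n s_n$; because inductive limits in $\AbGp$ are algebraic, $g$ is the image of some $g_{n_0}\in G(s_{n_0})$, and pushing it forward via $g_n:=G(s_{n_0}\leq s_n)(g_{n_0})$ produces a compatible family (for $n\geq n_0$) with $G(s_n\leq s)(g_n)=g$. By \autoref{prop:cc} the tail $(s_n,g_n)_{n\geq n_0}$ is $\ll$-increasing, and by the (O1) formula its supremum is $(s,g)$.

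Axioms (O3), (O4) and compactness of $(0,e)$ are then formal consequences of \autoref{prop:cc} and functoriality. For (O3), from $(s_i,g_i)\ll(t_i,h_i)$ one reads off $s_i\ll t_i$ and $(s_i,g_i)\leq(t_i,h_i)$ via \autoref{prop:cc}; the compatibility of order and sum (established when showing $S_G$ is an ordered monoid) gives that the sums satisfy $\leq$, while (O3) in $S$ gives $s_1+s_2\ll t_1+t_2$, so \autoref{prop:cc} upgrades $\leq$ to $\ll$. For (O4), I would compute both suprema through the (O1) formula, using (O4) in $S$ to match the $S$-components and the functoriality chain $G(s_n\leq s+t)(g_n)=G(s\leq s+t)(g)$ (and symmetrically in the $t$-coordinate) to match the $G$-components. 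Finally $(0,e)$ is compact because $0\ll 0$ in $S$ (as $0$ is the least element of the positively ordered $S$) and $G(0\leq 0)=\id$, whence $(0,e)\ll(0,e)$ by \autoref{prop:cc}.

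The main obstacle is (O1): it is the only point where the hypothesis that $G$ preserves inductive limits is genuinely needed, and the real content is assembling the compatible family $(l_n)_n$ into a well-defined $l\in G(z)$ and checking that $(z,l)$ is the \emph{least} upper bound. Everything else reduces mechanically to \autoref{prop:cc}, functoriality of $G$, and the corresponding axioms already holding in $S$.
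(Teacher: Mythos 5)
Your proof is correct and follows essentially the same route as the paper: build the supremum explicitly for (O1), lift the fiber element back to a finite stage via algebraic inductive limits for (O2), and reduce (O3), (O4) and compactness of $(0,e)$ to \autoref{prop:cc}. One minor remark: the hypothesis that $G$ preserves inductive limits is genuinely needed in (O2) (to lift $g$ to some $G(s_{n_0})$), not in (O1), where simply defining $l:=G(z_0\leq z)(l_0)$ by pushforward already yields the supremum without any colimit argument.
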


\begin{proof}
The fact that $(0,e)$ is compact falls immediately from the previous proposition.

(O1) Let $(s_n,g_n)_n$ be an increasing sequence in $S_G$. Then $(s_n)_n$ is an increasing sequence and its supremum, that we write $s$, exists in $S$.  Also, observe that for any $m\in\N$, we have $G(s_m\leq s_{m+1})(g_m)=g_{m+1}$, from which we deduce that $g:=G(s_m\leq s)(g_m)=G(s_{m+1}\leq s)(g_{m+1})$. By construction, $(s,g)\geq (s_m,g_m)$ for any $m$. Now let $(t,h)\in S_G$ be such that $(t,h)\geq (s_m,g_m)$ for any $m$. By the definition of the supremum, we have that $t\geq s$ in $S$. Further, $G(s_m\leq t)(g_m)=G(s\leq t)\circ G(s_m\leq s)(g_m)=G(s\leq t)(g)$. Therefore $(s,g)\leq (t,h)$.

(O2) Let $(s,g)\in S_G$. Consider a $\ll$-increasing sequence $(s_n)_n$ in $S$ whose supremum is $s$, given by (O2). Now arguing similarly as in the above proof, we can construct an increasing sequence $(s_{n+m},g_{n+m})_n$ whose supremum is $(s,g)$. Finally, using the equivalences of the above proposition, we conclude that the latter sequence is in fact $\ll$-increasing. 

(O3) Immediate from the previous proposition.

(O4) Let $(s_n,g_n)_n, (t_n,h_n)_n$ be increasing sequences whose suprema are $(s,g),(t,h)$ respectively. Then $(s_n+t_n)_n$ is an increasing sequence in $S$ whose supremum is $s+t$ and the following diagram is commutative in $\AbGp$
\[
\xymatrix{
G(s_n)\ar[r]^{}\ar[d]_{} & G(s_n+t_n)\ar[d]_{} & G(t_n)\ar[l]^{}\ar[d]^{}\\
G(s)\ar[r]^{} & G(s+t) & G(t)\ar[l]^{}\\
}
\]
Thus $G(s_n+t_n\leq s+t)(l_n)=g+h$, where $l_n:=G(s_n\leq s_n+t_n)(g_n)+G(t_n\leq s_n+t_n)(h_n)$. We conclude that $\sup (s_n,g_n)+ \sup (t_n,h_n)=\sup(s_n+t_n,l_n)=\sup((s_n,g_n)+(t_n,h_n))$.
\end{proof}

As for pomonoid group systems, we have to introduce the following category located between $\Cu$ and $\Cu^\sim$ introduced in \cite{C21a}, in order to build a webbing functor for Cuntz group systems.

\begin{dfn}
The category $\Cus$ is the category whose objects are ordered monoids with a compact neutral element, satisfying order-theoretic axioms (O1)-...-(04) together with axioms (PC), (PD), (S0) and whose morphisms are ordered monoid morphisms preserving the compact-containment relation and suprema of $\leq$-increasing sequences. 
\end{dfn}

We highlight that $\Cus$ can equivalently be seen as the full subcategory of $\Cu^\sim$ introduced in \cite{C21a}, whose objects are $\Cu^\sim$-semigroups satisfying axioms (PC), (PD) and (S0). We now obtain a webbing functor for the category $\CuG$ by restriction as follows.

\begin{prop}
The webbing functor $\mathfrak{w}: \Cu_{*\!\Gp}\longrightarrow \Cu^*$ is well-defined.
\end{prop}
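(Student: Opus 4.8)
The plan is to verify that the webbing functor $\mathfrak{w}\colon\CuG\longrightarrow\Cus$ is well-defined by checking two things: first, that the target object $S_G=\mathfrak{w}(S,G)$ lands in $\Cus$, and second, that the morphism $\alpha_\eta=\mathfrak{w}(\alpha,\eta)$ is a genuine $\Cus$-morphism. Functoriality itself (preservation of composition and identities) is already established in full generality in the first \autoref{prop:cc}-preceding Proposition about $\mathfrak{w}\colon\PoSG\longrightarrow\PoS$, so nothing new is needed there; the work is purely in confirming that the restriction respects the richer structure of $\CuG$ and $\Cus$.

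First I would dispose of the object-level claim by simply assembling the preceding results. By the Proposition stating that $S_G$ is an ordered monoid satisfying (PC), (PD), (S0), and the Proposition stating that $S_G$ satisfies (O1)--(O4) with compact neutral element $(0,e)$, we immediately get that $S_G$ is an ordered monoid with compact neutral element satisfying (O1)--(O4) together with (PC), (PD), (S0); by definition this is precisely an object of $\Cus$. So the object part requires only a citation of what has already been proven.

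The substantive step is the morphism-level claim: given a $\CuG$-morphism $(\alpha,\eta)\colon(S,G)\longrightarrow(T,H)$, I must show $\alpha_\eta\colon S_G\longrightarrow T_H$ is an ordered monoid morphism that preserves the compact-containment relation and suprema of increasing sequences. The monoid-morphism and order-preservation properties are already noted (the former is deferred to the reader in the $\PoMG$ webbing-functor proof, the latter in the $\PoSG$ one), so these can be invoked. For preservation of $\ll$, I would use \autoref{prop:cc}: suppose $(s,g)\ll(t,h)$, so $s\ll t$ and $G(s\leq t)(g)=h$. Since $\alpha$ is a $\Cu$-morphism it preserves $\ll$, giving $\alpha(s)\ll\alpha(t)$; and since $\eta\colon G\Rightarrow H\circ\alpha$ is a natural transformation, naturality along the arrow $s\overset{\leq}\longrightarrow t$ yields $H(\alpha(s)\leq\alpha(t))(\eta_s(g))=\eta_t(G(s\leq t)(g))=\eta_t(h)$, which is exactly condition (iii) of \autoref{prop:cc} certifying $(\alpha(s),\eta_s(g))\ll(\alpha(t),\eta_t(h))$, i.e. $\alpha_\eta(s,g)\ll\alpha_\eta(t,h)$.

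The step I expect to be the main obstacle is preservation of suprema of increasing sequences, since this is where the technical hypothesis that $G$ (and $H$) preserve inductive limits must be combined with the explicit description of suprema in $S_G$ worked out in the proof of (O1). Given an increasing sequence $(s_n,g_n)_n$ in $S_G$ with supremum $(s,g)$, where $g=G(s_m\leq s)(g_m)$, I must show $\alpha_\eta(s,g)=\sup_n\alpha_\eta(s_n,g_n)$. Since $\alpha$ preserves suprema, the first coordinate gives $\alpha(s)=\sup_n\alpha(s_n)$; the second coordinate requires checking that $\eta_s(g)$ equals $H(\alpha(s_m)\leq\alpha(s))(\eta_{s_m}(g_m))$, which again follows from naturality of $\eta$ applied to the arrow $s_m\overset{\leq}\longrightarrow s$ together with the (O1)-formula for the supremum computed in $T_H$. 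The care needed is to confirm that the supremum formula derived in the (O1) proof is being applied consistently in both $S_G$ and $T_H$ and that naturality is invoked along the correct comparison arrow; once that bookkeeping is in place the identity is immediate, so I would state the argument concisely and leave the routine diagram-chase to the reader, as the paper does for the analogous $\PoMG$ case.
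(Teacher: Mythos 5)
Your proposal is correct and follows exactly the route the paper intends: the object-level claim is just a citation of the two preceding propositions, and the morphism-level verification (preservation of $\ll$ via \autoref{prop:cc} plus naturality of $\eta$, and preservation of suprema via the (O1)-formula plus naturality) is precisely the check the paper's one-line proof leaves to the reader. You have simply written out the details the paper omits; there is no divergence in method.
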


\begin{proof}
We leave the reader to check that $\alpha_\eta$ preserves $\ll$ and suprema of increasing sequences.
\end{proof}

It is worth mentioning that the proofs of (O1) and (O2) could be applied to look at other categories in domain theory. For instance, it would be easily checked that the webbing transformation of a partially ordered group system $(S,G)$ where $S$ is in fact a sequentially directed complete poset (i.e. satisfying (O1)) gives a sequentially directed complete poset. If moreover $S$ is a sequentially continuous (i.e. satisfying (O2)), then so is $S_G$.

We now have a look at additional axioms that have been considered for $\Cu$-semigroups and we study whether they pass through the webbing functor (even in a weaker form). Let us recall some of these axioms and we refer the reader to \cite{R13}, \cite{RW10} and \cite{C23} for more details. (Also, \cite[Section 10]{GP23}.) Let $S$ be a $\Cu$-semigroup. We say that $S$ satisfies

(Weak Cancellation) if, whenever we have $s+t\ll v+ t$ then $s\ll v $.

(O5) if, whenever we have $s\leq t$ and $s'\ll s$ then there exists $w\in S$ such that $s'+w\leq t\leq s+w$.

(O6) if, whenever we have $x'\ll x\leq y+z$, then there exist $s\leq x,y$ and $t\leq x,z$ such that $x'\leq s+t$.\\
Let $S$ be a $\Cus$-semigroup. We say that $S$ satisfies

(Positive Weak Cancellation) if, whenever we have  $s+t\ll t$ then $s\ll 0$.

\begin{prop}
\label{prop:webprop}
Let $(S,G)\in\CuG$.

(i) If $S$ satisfies (WC) then $S_G$ satisfies (PWC). 

(ii) If $S$ satisfies (O5) then does $S_G$.

(iii) There exists a $\CatCa$-algebra $A$ and a functor $G\in \Fun(\Cu(A),\AbGp)$ such that $\mathfrak{w}(\Cu(A),G)$ does not satisfy (O6). 

(iv) If $S$ is almost divisible and $G$ is stable, in the sense that $G(s\leq k.s)$ is a group isomorphism for all $k\in\overline{\N}$, then $S_G$ is almost divisible.

(v) There exist a $\CatCa$-algebra $A$ and a functor $G\in \Fun(\Cu(A),\AbGp)$ such that $\Cu(A)$ is almost unperforated and yet $\mathfrak{w}(\Cu(A),G)$ is not almost unperforated. 
\end{prop}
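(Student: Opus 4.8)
The plan is to treat the five items one at a time, relying throughout on the rigidity of the order on $S_G$ (equality of transported group elements) and on the characterisation of $\ll$ in \autoref{prop:cc}. For (i), starting from $(s,g)+(t,h)\ll(t,h)$, \autoref{prop:cc} forces $s+t\ll t=0+t$ in $S$, so (WC) yields $s\ll 0$; since a $\Cu$-semigroup is positively ordered we get $0\le s\le 0$, hence $s=0$, and then $g=e$ because $G(0_S)\simeq\{e\}$. Thus $(s,g)=(0,e)\ll(0,e)$, which is exactly (PWC). For (ii), from $(s',g')\ll(s,g)\le(t,h)$ I would read off $s'\ll s\le t$ in $S$ and apply (O5) there to obtain $w$ with $s'+w\le t\le s+w$; the natural lift is $(w,e)$. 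Using $g=G(s'\le s)(g')$, $h=G(s\le t)(g)$ and functoriality of $G$, both $(s',g')+(w,e)\le(t,h)$ and $(t,h)\le(s,g)+(w,e)$ collapse to identities already valid in $S$ (together with $G(\cdot)(e)=e$), the neutral group element carrying no obstruction.

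For the negative statements (iii) and (v) a single economical model serves. Take $A=\C$, so $S=\Cu(A)\cong\overline{\N}$, and let $G$ be given by $G(0)=\{e\}$, $G(n)=\Z$ for $n\ge 1$ and $G(\infty)=\Z$, with every nontrivial connecting map equal to $\id_{\Z}$. The only nonformal check for $(S,G)\in\CuG$ is that $G$ preserves suprema of increasing sequences: such a sequence in $\overline{\N}$ is either eventually constant or tends to $\infty$, and in both cases the induced system in $\AbGp$ is eventually $\Z\xrightarrow{\id}\Z$, with limit $\Z=G(\sup)$. For (iii) I would exhibit $(2,1)\ll(2,1)\le(1,1)+(1,0)$ and note that any (O6)-decomposition $(s,a)\le(2,1),(1,1)$, $(t,b)\le(2,1),(1,0)$ with $(2,1)\le(s,a)+(t,b)$ forces $s=t=1$, hence $a=1$ and $b=0$ from the comparisons with $(1,1)$ and $(1,0)$, whereas $(1,b)\le(2,1)$ demands $b=1$; the contradiction $0=1$ shows (O6) fails. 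For (v), using that $\overline{\N}$ is almost unperforated, I would take $n=1$, $(x,g)=(1,1)$ and $(y,h)=(2,2)$: then $2\,(1,1)=(2,2)\le(2,2)=1\cdot(2,2)$, yet $(1,1)\le(2,2)$ would require $G(1\le 2)(1)=2$, i.e. $1=2$ in $\Z$, so almost unperforation fails in $S_G$.

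The substantive item, and the step I expect to be the main obstacle, is (iv). Given $(x',g')\ll(x,g)$ and $k\ge 1$, I would use almost divisibility of $S$ to produce $y$ with $ky\le x$ and $x'\le(k+1)y$, and then lift $y$ to a suitable $(y,b)$. The difficulty is precisely the rigidity of the order in the group coordinate: a single $b\in G(y)$ must validate both $k(y,b)\le(x,g)$ and $(x',g')\le(k+1)(y,b)$, and unwinding $k(y,b)=(ky,\,k\,G(y\le ky)(b))$ turns these into equations in the fibres over $x$ and over $(k+1)y$. This is where stability is indispensable: since $G(y\le my)$ is an isomorphism for every $m$, the fibres along the chain $y\le ky\le(k+1)y$ are canonically identified, and I would define $b$ by transporting the relevant class back to $G(y)$ through these isomorphisms, so that naturality of $G$ forces the two required equalities. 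I expect the compatible choice of $b$ --- and checking that it is forced rather than merely available --- to be the crux; once it is in place, almost divisibility of the first coordinate is inherited from $S$ and $(y,b)$ witnesses almost divisibility of $S_G$.
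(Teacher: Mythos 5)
Your items (i) and (ii) are correct and follow the paper's argument essentially verbatim (the paper writes the lift of $w$ as $(w,0)$ where you write $(w,e)$). For the counterexamples (iii) and (v) you take a genuinely different and much more economical route: the paper works with $A=\CC(\T^2)$, respectively $A=\CC(\T)$, and the functor $x\longmapsto\K_1(I_x)$, so that the failure of (O6), respectively of almost unperforation, is exhibited in an invariant that actually occurs in practice (the unitary Cuntz semigroup); you instead take $A=\C$, $\Cu(\C)\simeq\overline{\N}$, and an ad hoc constant-$\Z$ fiber functor. Your functor does preserve inductive limits and sends $0$ to the trivial group, so it is a legitimate Cuntz group system, and both computations check out: for (iii) the element $t$ required by (O6) would have to have group component equal to $0$ and to $1$ simultaneously, and $t=(0,e)$ is excluded because $(0,e)\not\leq(2,1)$; for (v) the relation $2\cdot(1,1)\leq(2,2)$ holds with equality while $(1,1)\leq(2,2)$ would force $1=2$ in $\Z$. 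Since the statement only asserts existence of some $A$ and some $G$, your simpler examples suffice, at the cost of saying less about the concrete invariants the paper has in mind.

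Item (iv) is where your proposal has a genuine gap, and you flag the crux yourself without closing it. Writing $k(y,b)=(ky,\,k\,G(y\leq ky)(b))$, the condition $k(y,b)\leq(x,g)$ unwinds to $k\cdot G(y\leq x)(b)=g$, and $(x',g')\leq(k+1)(y,b)$ unwinds to $(k+1)\cdot G(y\leq(k+1)y)(b)=G(x'\leq(k+1)y)(g')$. These are \emph{divisibility} conditions in the fibers: $g$ must be $k$ times an element in the image of $G(y\leq x)$, and the transported $g'$ must be $(k+1)$ times something. Stability only identifies the fibers $G(y)\simeq G(ky)\simeq G((k+1)y)$; it says nothing about dividing by $k$ or by $k+1$ inside those groups, so \textquotedblleft transporting the relevant class back to $G(y)$ through these isomorphisms\textquotedblright\ does not produce a $b$ satisfying either equation, let alone both. (With constant fiber $\Z$ and identity connecting maps --- which is stable --- the first equation for $g=1$, $k=2$ reads $2b=1$ and has no solution.) So the strategy you sketch does not close the step you correctly identify as the main obstacle: the obstruction lives in the group structure of the fibers, not in the identification of the fibers, and your proof of (iv) is therefore incomplete as it stands. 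You should either impose divisibility of the fibers, or work out precisely what the paper's choice of the element $\hat{h}$ with $G(nw\leq s)(\hat{h})=g$ is doing and verify that it really yields a witness of the form $n(w,b)$.
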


\begin{proof} All the following proofs are based on the characterization of the compact-containment relation in $S_G$ given in \autoref{prop:cc}.

(i) Assume that $S$ satisfies (WC) and let $(s,g),(t,h)$ in $S_G$ be such that $(s,g)+(t,h)\ll (t,h)$. This implies that $s+t\ll t$ in $S$. Thus $s\ll 0$. It follows from $G(0)=\{e\}$ that $(s,g)= 0_{S_G}\ll 0_{S_G}$. 

(ii) Assume that $S$ satisfies (O5) and let $(s',g'),(s,g),(t,h)$ in $S_G$ be such that $(s',g')\ll (s,g)\leq (t,h)$. This implies that $s'\ll s\leq t$ in $S$. Thus we can find some $w\in S$ such that $s'+w\leq t \leq s+w$. Moreover, the assumption also implies that $G(s'\leq t)(g')=h=G(s\leq t)(g)$. Adding $G(w\leq t)(0)$ on both sides, we obtain that $G(s'\leq t)(g')+G(w\leq t)(0)=h=G(s\leq t)(g)+G(w\leq t)(0)$. We deduce that $G(s'+w\leq t)[G(s'\leq s'+w)(g')+G(w\leq s'+w)(0)]=h$ and that $G(t\leq s+w)(h)=G(s\leq s+w)(g)+G(w\leq s+w)(0)$. We conclude that $(s',g')+(w,0)\leq (t,h)\leq (s,g)+(w,0)$ and hence, $S_G$ satisfies (O5). 

(iii) Let $A:=\CC(\T^2)$ and consider the functor $G:\Cu(A)\longrightarrow \AbGp$ sending $x\longmapsto\K_1(I_x)$, where $I_x$ is the ideal of $A$ generated by $x$. (We are using the lattice isomorphism $\Lat(A)\simeq \Lat(\Cu(A))$.) Then $\mathfrak{w}(\Cu(A),G)$ is the unitary Cuntz semigroup of $A$ that we write $\Cu_1(A)$. Consider $x',x,y,z$ be in $\Cu(A)$ such that they are all equal to $[1_A]$. Then $x'\ll x\leq y+z$ and these are full elements in $A$. Also, recall that $\K_1(A)\simeq \Z^4$ and that for any connected open set $U\subsetneq \T^2$ and positive element $f_U\in A$ with support $U$, we have $\K_1(I_{[f_U]})\simeq \K_1(C_0(R^2))\simeq \{e\}$. Now, using (O6) in $\Cu(A)$, we can find positive elements $a,b$ in $A\otimes\mathcal{K}$ such that $[a]\leq x,y$ and $[b]\leq x,z$ and $x'\leq [a]+[b]$. As a result, we deduce that $a,b$ belongs to $A$ and that they have open supports $U,V$ satisfying $U,V\subseteq \T^2$ and $U\cup V\supseteq \T^2$. Thus, the groups $\K_1(I_{[a]}),\K_1(I_{[b]})$ are either trivial or isomorphic to $\Z^4$. Now consider the following elements in $\Cu_1(A)$
\[
(x',(1,1,0,0))\ll (x,(1,1,0,0)) \leq (y,(1,0,0,0))+ (z,(0,1,0,0))
\]
Observe that $([a],e)$ does not compare neither with $(x,(1,1,0,0))$ nor $(y,(1,0,0,0))$. Thus if $\K_1(I_{[a]})$ is trivial, we cannot find the required elements. On the other hand, if $\K_1(I_{[a]})\simeq \Z^4$ (or equivalently, if $[a]=[1_\T]$) we have to find $([a],k)$ such that $([a],k)\leq (x,(1,1,0,0)), (y,(1,0,0,0))$ and hence, $k$ has to be equal to $(1,1,0,0)$ and $(1,0,0,0)$ simultaneously which is a contradiction. 

(iv) Assume that $S$ is almost divisible. Let $(s',g'),(s,g)$ in $S_G$ be such that $(s',g')\ll (s,g)$ and let $n\in\N$. This implies that $s'\ll s$ and that $G(s'\leq s)(g')=g$. Thus, there exists $w\in S$ such that $nw\leq s$ and $s'\leq (n+1)w$. Now consider $h:=G(s'\leq (n+1)w)(g')$. We know from the stability of $G$ that $G(nw\leq (n+1)w)$ is an isomorphism, thus we can find a unique $\hat{h}$ in $G(nw)$ satisfying $G(nw\leq s)(\hat{h})=g$, from which the almost divisibility of $S_G$ follows.

(v) Let $A:=\CC(\T)$ and consider the functor $G:\Cu(A)\longrightarrow \AbGp$ sending $x\longmapsto\K_1(I_x)$. Recall that $\K_1(A)\simeq \Z$. Let $n\in\N$. Observe that for any $k\in \Z\setminus\{0\}$ we have that $(n+1)([1_A],nk)\leq n(2[1_A],(n+1)k)$ and obviously, $([1_A],nk)$ does not compare with $([1_A],(n+1)k)$.
\end{proof}

\begin{qst} Are weaker notions of almost unperforation and (O6) suitable for $\Cus$-semigroups, in the sense that if $S$ satisfies the latter properties then $S_G$ satisfies their weaker version?
\end{qst}

\subsection{Continuity of the webbing functors}
We aim to show that all the webbing functors defined above are continuous. To that end, we first show the existence of inductive limits in the involved domain and codomain categories. In the sequel, $\CC$ stands for either $\PoS, \PoM$ or $\Cu$.

\vspace{0,2cm}$\hspace{-0,34cm}\bullet\,\,\textbf{Inductive limits in the categories }\CC_{*{\Gp}}$. The main idea is to exhibit a natural candidate for the $\CC_{*{\Gp}}$-limit based on the $\CC$-limit and show that it satisfies the universal property. We highlight that an extra-effort is needed for $\CuG$, as one needs to take care of a topological aspect. 
Let us first recall details on the composition of $\CC_{*{\Gp}}$-morphisms as a clarifying reminder.

Let $( (S_i,G_i), (\alpha_{ij}, \eta_{ij}) )_{i\in I}$ be inductive system in $\CC_{*{\Gp}}$. For any $i\leq j\leq k$, we know that $(\alpha_{jk}, \eta_{jk})\circ(\alpha_{ij}, \eta_{ij})=(\alpha_{jk}, \eta_{jk})$. This can be illustrated by the following commutative diagrams of $\CC$-morphisms and natural transformations.
\[
\xymatrix@!R@!C@R=20pt@C=20pt{
S_i\ar[r]^{\alpha_{ik}}\ar[d]_{\alpha_{ij}}&S_k   
&&G_i\ar@{=>}[r]^{\eta_{ik}}\ar@{=>}[d]_{\eta_{ij}}&G_k\circ\alpha_{ik}\\
S_j\ar[ur]_{\alpha_{ik}}&             
&&G_j\circ\alpha_{ij}\ar@{=>}[ur]_{(\eta_{jk})_{\alpha_{ij}}}
}
\]
E.g. in the particular case of inductive sequences, we have 
\[
\left\{
\begin{array}{ll}
\alpha_{ik}=\alpha_{k-1k}\circ\dots\circ\alpha_{ii+1}.\\
\eta_{ik}=(\eta_{k-1k})_{\alpha_{ik-1}}\circ\dots\circ(\eta_{i+1i+2})_{\alpha_{ii+1}}\circ\eta_{ii+1}.
\end{array}
\right.
\]
We now observe that the inductive system in $\CG$ induces an inductive system $(S_i, \alpha_{ij})_{i\in I}$ in $\CC$ and we denote $(S_\infty,\alpha_{i\infty})_i$ its inductive limit in $\CC$. The candidate for the $\CG$-limit will naturally be based upon $(S_\infty,\alpha_{i\infty})_i$ and we are next building a \emph{limit functor} $G_\infty:S_\infty\longrightarrow \AbGp$ together with \emph{limit transformations} $\{\eta_{i\infty}:G_i\Rightarrow G_\infty\circ\alpha_{i\infty}\}_{i\in I}$.\\
\textbf{[Construction of the limit functor $G_\infty$]} As a first step, we build an algebraic limit functor $G_\infty^{\alg}:\bigcup\limits_{i\in I}\alpha_{i\infty}(S_i)\longrightarrow \AbGp$. Let $s\in S_i$ and consider the inductive system in the category $\AbGp$, indexed by $J_i:=\{j\in I \mid j\geq i\}$
\[
\xymatrix{
G_i(s)\ar[r]^{(\eta_{ij})_s} &G(\alpha_{ij}(s))\ar[rr]^{(\eta_{jk})_{\alpha_{ij}(s)}} && G_k(\alpha_{ik}(s))\ar[r]^{}&\dots  & 
}
\]
We denote its limit by $(G_{\alpha_{i\infty}(s)},\eta_{\alpha_{ij}(s)})_{j\in J_i}$. We now have to distinguish the case where $\CC$ is either $\PoS$ or $\PoM$ and the case where $\CC$ is $\Cu$.

$\bullet$ $\CC$ is $\PoS$ or $\PoM$: Let $s\in S_i, t\in S_j$ such that $\alpha_{i\infty}(s)\leq \alpha_{j\infty}(t)$ in $S_{\infty}$. Then there exists some $k\geq i,j$ such that $\alpha_{ik}(s)\leq \alpha_{jk}(t)$. Therefore, we have the following commutative diagram in $\AbGp$
\[
\xymatrix{
G_i(s)\ar@/^{2pc}/[rrrrrr]_{\eta_s}\ar[rr]_{(\eta_{ik})_s} && G_k(\alpha_{ik}(s))\ar[d]_{G_k(\leq)}\ar[rr]_{(\eta_{kl})_{\alpha_{ik}(s)}} &&G_l(\alpha_{il}(s))\ar[d]_{G_l(\leq)}\ar[r]^{}& \dots\ar[d]\ar[r]^{}& G_{\alpha_{i\infty}(s)}\ar@{.>}[d]_{\exists!}   \\
G_j(t)\ar@/_{2pc}/[rrrrrr]^{\eta_t}\ar[rr]^{(\eta_{jk})_t} && G_k(\alpha_{jk}(t))\ar[rr]^{(\eta_{kl})_{\alpha_{jk}(t)}} &&G_l(\alpha_{jl}(t))\ar[r]^{}& \dots\ar[r]^{}& G_{\alpha_{j\infty}(t)}
}
\]
By the universal properties of the inductive limit, there exists a unique group morphism \break$G_{\alpha_{i\infty}(s)\alpha_{j\infty}(t)}:G_{\alpha_{i\infty}(s)}\longrightarrow G_{\alpha_{j\infty}(t)}$ that commutes with the above diagram. In particular, $G_{\alpha_{i\infty}(s)}\simeq G_{\alpha_{j\infty}(t)} $ whenever $\alpha_{i\infty}(s)=\alpha_{j\infty}(t)$.

$\bullet$ $\CC$ is $\Cu$: Let $s',s\in S_i, t\in S_j$ such that $s'\ll s$ and $\alpha_{i\infty}(s)\leq \alpha_{j\infty}(t)$ in $S_{\infty}$. Then there exists some $k\geq i,j$ such that $\alpha_{ik}(s')\leq \alpha_{jk}(t)$. Arguing similarly as in the previous case, there exists a unique group morphism $G_{\alpha_{i\infty}(s')\alpha_{j\infty}(t)}:G_{\alpha_{i\infty}(s')}\longrightarrow G_{\alpha_{j\infty}(t)}$. Now, observing that the set $s_{\ll}:=\{s'\ll s\}$ is upward directed and that the inductive system $(G(s'),G(s''\leq s'))_{s_{\ll}}$ has inductive limit $(G(s),G(s'\leq s)_{s_{\ll}}$, we conclude that there exists a unique group morphism $G_{\alpha_{i\infty}(s)\alpha_{j\infty}(t)}:G_{\alpha_{i\infty}(s)}\longrightarrow G_{\alpha_{j\infty}(t)}$.

Consequently, for all categories $\PoS,\PoM$ and $\Cu$, we have a well-defined functor
\[
\begin{array}{ll}
	G_\infty^{\alg}: \bigcup\limits_{i\in I}\alpha_{i\infty}(S_i)\longrightarrow \AbGp \\
	\hspace{1,65cm} \alpha_{i\infty}(s)\longmapsto G_{\alpha_{i\infty}(s)}\\
	\hspace{0,15cm}\alpha_{i\infty}(s)\leq \alpha_{j\infty}(t)\longmapsto G_{\alpha_{i\infty}(s)\alpha_{j\infty}(t)}
\end{array}
\] 
Whenever $\CC$ is the category $\PoS$ or the category $\PoM$, the inductive limits are algebraic. This is equivalent to saying that $\bigcup\limits_{i\in I}\alpha_{i\infty}(S_i)= S_\infty$. As a result, $(S_\infty,G_\infty^{\alg})$ belongs to $\CG$. 
Nevertheless, whenever $\CC$ is the category $\Cu$, extra-work is again needed. This is due to the fact that $\Cu$-semigroups carry a topological structure  (e.g. (O1) is a version of completeness). As a consequence, $\bigcup\limits_{i\in I}\alpha_{i\infty}(S_i)$ is dense in $S$, but no longer equal. Therefore, the algebraic limit functor has to be \textquoteleft completed\textquoteright.
As a second step, we build a completed limit functor $G_\infty:S_\infty\longrightarrow \AbGp$. Let $s\in S_\infty$ and let $(\alpha_{i_n\infty}(s_n))_n$ be a $\ll$-increasing sequence in $\bigcup\limits_{i\in I}\alpha_{i\infty}(S_i)$ whose supremum is $s$. Now consider the following inductive sequence in $\AbGp$
\[
\xymatrix{
\dots\ar[r]^{} &G_\infty^{\alg}(\alpha_{i_n\infty}(s_n))\ar[rr]^{G_\infty^{\alg}(\leq)} && G_\infty^{\alg}(\alpha_{i_{n+1}\infty}(s_{n+1}))\ar[r]^{}&\dots  & 
}
\]
We denote its limit by $(G_s,{\eta_{\alpha_{i_n\infty}(s_n)}})_{n\in\N}$. 
Let $s,t\in S_\infty$ be such that $s\leq t$ and pick $\ll$-increasing sequences $(\alpha_{i_n\infty}(s_n))_n,(\alpha_{j_n\infty}(t_n))_n$ in $\bigcup\limits_{i\in I}\alpha_{i\infty}(S_i)$ whose suprema are respectively $s,t$. Then for any $n$ there exists $m_n\geq n$ such that $\alpha_{i_n\infty}(s_n)\leq \alpha_{j_m\infty}(t_{m_n})$. Therefore, we have the following commutative diagram in $\AbGp$
\[
\xymatrix{
\dots\ar[r]^{}  &  
G_\infty^{\alg}(\alpha_{i_n\infty}(s_n))\ar[d]_{G_\infty^{\alg}(\leq)}\ar[rr]^{G_\infty^{\alg}(\leq)}  &&  
G_\infty^{\alg}(\alpha_{i_{n+1}\infty}(s_{n+1}))\ar[d]_{G_\infty^{\alg}(\leq)}\ar[r]^{}  &  
\dots\ar[d]\ar[r]^{}  &  
G_s\ar@{.>}[d]_{\exists!}   \\
\dots\ar[r]^{}  & 
G_\infty^{\alg}(\alpha_{j_{m_n}\infty}(t_{m_n}))\ar[rr]^{G_\infty^{\alg}(\leq)}   &&  
G_\infty^{\alg}(\alpha_{j_{m_{n+1}}\infty}(t_{m_{n+1}}))\ar[r]^{}  &
\dots\ar[r]^{}  &  
G_t
}
\]
By the universal properties of the inductive limit, there exists a unique group morphism $G_{st}:G_s\longrightarrow G_t$ that commutes with the above diagram. In particular, $G_s\overset{\id}\simeq G_t $ whenever $s=t$.
Consequently, we have a well-defined functor
\[
\begin{array}{ll}
	G_\infty: S_\infty\longrightarrow \AbGp \\
	\hspace{1,25cm} s\longmapsto G_s\\
	\hspace{0,7cm}s\leq t\longmapsto G_{st}
\end{array}
\] 
\textbf{[Construction of the limit transformations $\eta_{i\infty}$]} Let $i\in I$. We aim to build a natural transformation $\eta_{i\infty}:G_i\Rightarrow G_\infty\circ\alpha_{i\infty}$. Let $s,t\in S_i$ be such that $s\leq t$. We know from the construction of the algebraic limit functor that the following square is commutative.
\[
\xymatrix{
G_i(s)\ar[rr]^{\eta_s}\ar[d]_{G_i(\leq)} &&G_\infty^{\alg}(\alpha_{i\infty}(s))\ar[d]^{G_\infty^{\alg}(\leq)}   \\
G_i(t)\ar[rr]_{\eta_t} && G_\infty^{\alg}(\alpha_{i\infty}(t))
}
\]
Also, one can check that the restriction of $G_\infty$ to $\bigcup\limits_{i\in I}\alpha_{i\infty}(S_i)$ agrees with $G_\infty^{\alg}$.
Consequently, we have a well-defined natural transformation $\eta_{i\infty}:G_i\Rightarrow G_\infty\circ\alpha_{i\infty}$ where $(\eta_{i\infty})_s:=\eta_s$. We take the opportunity to remark that the following diagram of natural transformations commutes. 
\[
\xymatrix{
G_i\ar@{=>}[r]^{\eta_{i\infty}}\ar@{=>}[d]_{\eta_{ij}}&G_\infty\circ\alpha_{i\infty}\\
G_j\circ\alpha_{ij}\ar@{=>}[ur]_{(\eta_{j\infty})_{\alpha_{ij}}}
}
\]
We conclude that $(S_\infty,G_\infty)$ is a well-defined object in $\CG$ and  $(\alpha_{i\infty},\eta_{i\infty})$ is a well-defined morphism in $\CG$ for any $i\in I$. Further, gathering all the above, we deduce that 
\[
( (S_\infty,G_\infty), (\alpha_{i\infty},\eta_{i\infty}) )_{i\in I} \text{ is a cocone for } ( (S_i,G_i), (\alpha_{ij}, \eta_{ij}) )_{i\in I}.
\]

\begin{thm}
\label{thm:limG}
The categories $\PoSG,\PoMG$ and $\CuG$ have inductive limits. More particularly, we have that
\[
\CC_{*{\Gp}}-\lim\limits_{\longrightarrow}((S_i,G_i), (\alpha_{ij}, \eta_{ij}))_i\simeq ((S_\infty,G_\infty),(\alpha_{i\infty},\eta_{i\infty}))_i
\]
where $(S_\infty,\alpha_{i\infty})\simeq \CC-\lim\limits_{\longrightarrow}(S_i,\alpha_{ij})_i$, $G_\infty:S_\infty\longrightarrow \AbGp$ is the limit functor and  $\eta_{i\infty}:G_i\Rightarrow G_\infty\circ\alpha_{i\infty}$ are the limit transformations.
\end{thm}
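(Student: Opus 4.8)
Everything except the universal property of the proposed cocone has already been established in the construction preceding the statement: $((S_\infty,G_\infty),(\alpha_{i\infty},\eta_{i\infty}))_i$ was shown to be a cocone for the system $((S_i,G_i),(\alpha_{ij},\eta_{ij}))_i$, with $(S_\infty,G_\infty)\in\CG$ and each $(\alpha_{i\infty},\eta_{i\infty})$ a $\CG$-morphism. The plan is therefore to fix an arbitrary competing cocone $((T,H),(\beta_i,\mu_i))_{i\in I}$ and to produce a unique $\CG$-morphism $(\gamma,\xi)\colon(S_\infty,G_\infty)\longrightarrow(T,H)$ with $(\gamma,\xi)\circ(\alpha_{i\infty},\eta_{i\infty})=(\beta_i,\mu_i)$ for all $i$. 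Unwinding the composition rule for $\CG$-morphisms, this factorization is equivalent to the two identities $\gamma\circ\alpha_{i\infty}=\beta_i$ and $\xi_{\alpha_{i\infty}}\circ\eta_{i\infty}=\mu_i$. I recall also that a $\CG$-morphism is nothing more than a pair consisting of a $\CC$-morphism and a natural transformation, so no condition beyond naturality will ever be imposed on $\xi$.

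The first coordinate is immediate. Forgetting the groups, $(T,(\beta_i))_i$ is a $\CC$-cocone for $(S_i,\alpha_{ij})_i$, and since $(S_\infty,\alpha_{i\infty})_i$ is the $\CC$-limit, its universal property yields a unique $\CC$-morphism $\gamma\colon S_\infty\longrightarrow T$ with $\gamma\circ\alpha_{i\infty}=\beta_i$. The substance of the proof is the construction of $\xi\colon G_\infty\Rightarrow H\circ\gamma$, which I would carry out by mirroring, step for step, the two-stage construction of $G_\infty$. First, on the dense piece $\bigcup_i\alpha_{i\infty}(S_i)$, fix $s\in S_i$ and consider the limit element $\alpha_{i\infty}(s)$. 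Since $\beta_j\circ\alpha_{ij}=\beta_i$ and $\gamma\circ\alpha_{i\infty}=\beta_i$, the components $(\mu_j)_{\alpha_{ij}(s)}$ for $j\geq i$ all share the single target $H(\gamma(\alpha_{i\infty}(s)))$; moreover the cocone identity $(\mu_k)_{\alpha_{ik}(s)}\circ(\eta_{jk})_{\alpha_{ij}(s)}=(\mu_j)_{\alpha_{ij}(s)}$ exhibits $\{(\mu_j)_{\alpha_{ij}(s)}\}_{j\geq i}$ as a cocone over the very inductive system in $\AbGp$ whose limit is $G_\infty^{\alg}(\alpha_{i\infty}(s))$. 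Its universal property then defines a unique $\xi_{\alpha_{i\infty}(s)}\colon G_\infty^{\alg}(\alpha_{i\infty}(s))\longrightarrow H(\gamma(\alpha_{i\infty}(s)))$ through which every $(\mu_j)_{\alpha_{ij}(s)}$ factors; independence of the chosen representative follows from the same comparison argument used to make $G_\infty^{\alg}$ well defined. Second, in the $\Cu$ case, I would extend $\xi$ to all of $S_\infty$ by continuity: given $s\in S_\infty$ and a $\ll$-increasing sequence $\alpha_{i_n\infty}(s_n)$ with supremum $s$, post-composing each $\xi_{\alpha_{i_n\infty}(s_n)}$ with $H(\gamma(\alpha_{i_n\infty}(s_n))\leq\gamma(s))$ produces a cocone over the inductive system defining $G_s=G_\infty(s)$, and the universal property delivers a unique $\xi_s\colon G_\infty(s)\longrightarrow H(\gamma(s))$.

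Once $\xi$ is defined, three verifications remain, all of which I would settle through the universal properties just invoked. Naturality of $\xi$ (commutativity of the square attached to each $s\leq t$ in $S_\infty$) holds because both composites are maps out of the limit $G_\infty(s)$ that agree after pre-composition with every structure map, hence coincide. The factorization $\xi_{\alpha_{i\infty}}\circ\eta_{i\infty}=\mu_i$ is true by design: $(\eta_{i\infty})_s=\eta_s$ is precisely the structure map through which $\xi_{\alpha_{i\infty}(s)}$ was made to factor $(\mu_i)_s$, and together with $\gamma\circ\alpha_{i\infty}=\beta_i$ this gives the desired identity $(\gamma,\xi)\circ(\alpha_{i\infty},\eta_{i\infty})=(\beta_i,\mu_i)$. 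Finally, for uniqueness, $\gamma$ is unique by the $\CC$-limit, and any competitor $\xi'$ satisfying the factorization must agree with $\xi$ on $\bigcup_i\alpha_{i\infty}(S_i)$ by the uniqueness clause of each universal property, and then on all of $S_\infty$ by continuity.

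I expect the genuine difficulty to be concentrated in the $\Cu$ completion step, exactly as it was for the construction of $G_\infty$ itself: one must check that the extended $\xi_s$ is independent of the approximating $\ll$-increasing sequence and that the family used to define it is a genuine cocone, both of which rest on the density of $\bigcup_i\alpha_{i\infty}(S_i)$ in $S_\infty$ together with the fact (used repeatedly above) that inductive limits in $\AbGp$ are algebraic. The $\PoS$ and $\PoM$ cases sidestep this altogether, since there $\bigcup_i\alpha_{i\infty}(S_i)=S_\infty$ and the single algebraic step already defines $\xi$ on the whole of $S_\infty$.
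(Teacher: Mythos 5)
Your proposal is correct and follows essentially the same route as the paper: reduce to the universal property, obtain $\gamma$ from the $\CC$-limit, build the natural transformation first on $\bigcup_i\alpha_{i\infty}(S_i)$ via the universal property of the algebraic $\AbGp$-limits defining $G_\infty^{\alg}$, and then extend to all of $S_\infty$ by the same completion step used for the limit functor in the $\Cu$ case. Your explicit attention to independence of representatives and of the approximating $\ll$-increasing sequence is a point the paper leaves largely implicit, but the argument is the same.
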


\begin{proof}
From the construction above, we have already seen that $((S_\infty,G_\infty),(\alpha_{i\infty},\eta_{i\infty}))_i$ is a cocone for the inductive system. We are only left to check that it satisfies the universal property. Let $((T_\infty,H_\infty),(\beta_{i\infty},\nu_{i\infty}))_i$ be another admissible cocone for the inductive system. We have to show that there exists a unique $\CG$-morphism $(S_\infty,G_\infty)\longrightarrow (T_\infty,H_\infty)$ compatible with the cocone diagrams. Since $(S_\infty,\alpha_{i\infty})$ is the $\CC$-limit of $(S_i,\alpha_{ij})_i$, we know that there is a unique $\CC$-morphism $\gamma:S_\infty\longrightarrow T_\infty$ compatible with the cocones. We now have to find (unique) compatible group morphisms $G_\infty(s)\longrightarrow H_\infty(\gamma(s))$ that will lead to a (unique) compatible natural transformation $G_\infty\Rightarrow H_\infty\circ\gamma$. 
Let $s\in S_i$. Observe that the cocones in $\CG$ induce the following cocones in $\AbGp$.
\[
\xymatrix{
G_i(s)\ar[r]^{(\eta_{ij})_s} &G(\alpha_{ij}(s))\ar[r]^{(\eta_{jk})_{\alpha_{ij}(s)}} & 
G_k(\alpha_{ik}(s))\ar@/_{2pc}/[rr]^{(\eta_{k\infty})_{\alpha_{ik}(s)}}\ar@/_{4pc}/[rrr]^{(\nu_{k\infty})_{\beta_{ik}(s)}}\ar[r]^{}&
\dots\ar[r]^{}  & 
G_\infty(\alpha_{i\infty}(s))\ar@{.>}[r]^{\exists!} & H_\infty(\beta_{i\infty}(s))\\
&&&&&\\
&
}
\]
However, we have constructed $(G_\infty(\alpha_{i\infty}(s)),(\eta_{k\infty})_{\alpha_{ik}(s)})_k$ to be the inductive limit of the above system. Thus there exists a unique compatible group morphism $\mu_{\alpha_{i\infty}(s)}:G_\infty(\alpha_{i\infty}(s)) \longrightarrow H_\infty(\beta_{i\infty}(s))$.  Arguing similarly as in the construction of the limit functor and noticing that $\gamma\circ\alpha_{i\infty}=\beta_{i\infty}$, we get that for $s,t\in S_i$ such that $s\leq t$, the following square is commutative.
\[
\xymatrix@!R@!C@R=40pt@C=60pt{
G_i(s)\ar@/^{-3pc}/[dd]_{({\nu_{i\infty}})_s}\ar[r]^{G_i(\leq)}\ar[d]^{({\eta_{i\infty}})_s} & 
G_i(t)\ar@/_{-3pc}/[dd]^{({\nu_{i\infty}})_t}\ar[d]_{({\eta_{i\infty}})_t} \\
G_\infty(\alpha_{i\infty}(s))\ar[r]^{G_\infty(\leq)}\ar[d]^{\mu_{\alpha_{i\infty}(s)}} & 
G_\infty(\alpha_{i\infty}(t))\ar[d]_{\mu_{\alpha_{i\infty}(t)}}\\
H_\infty\circ\gamma(\alpha_{i\infty}(s))\ar[r]_{H_\infty(\leq)}& 
H_\infty\circ\gamma(\alpha_{i\infty}(t))
}
\]

Overall, we have constructed a unique natural transformation $\mu:G_\infty^{\alg}\Rightarrow H_\infty\circ \gamma$ compatible with the cocones. Again arguing similarly as in the construction of the limit transformations, it can be shown that $\mu$ can be extended to a (unique) compatible natural transformation $\mu:G_\infty\Rightarrow H_\infty\circ \gamma$, in the sense that $\nu_{i\infty}=\mu_{\alpha_{i\infty}}\circ \eta_{i\infty}$. 
\end{proof}

$\hspace{-0,34cm}\bullet\,\,\textbf{Inductive limits in the categories }\PoS,\oMs \textbf{\and} \Cus$. The next step towards the continuity of the webbing functors is to prove that the codomain categories also have inductive limits. Thankfully, this task is simpler as we already know that $\PoS$ has inductive limits and that $\oMs$ and $\Cus$ are full subcategories of $\oM$ and $\Cu^\sim$ respectively, which both have inductive limits. Therefore, we are only left to check that axioms (PC), (PD) and (S0) pass to inductive limit in these larger categories. We next focus on the case of the category $\Cus\subseteq \Cu^\sim$ and a similar proof is valid for the category $\oMs$. Let us first recall a characterization of inductive limits in the category $\Cu^\sim$ given in \cite[Corollary 4.8]{AL23}.

Let $(S_i,\sigma_{ij})_i$ be an inductive system in $\Cu^\sim$. Then a pair $(S,\sigma_{ij})_i$ is the $\Cu^\sim$-inductive limit of the system if and only if it is a cocone satisfying the following axioms:

(L1) For any $s',s\in S$ such that $s'\ll s$, there exists $s_i$ in some $S_i$ such that $s'\ll \sigma_{i\infty}(s_i)\ll s$.

(L2) For any $s,t\in S_i$ such that $\sigma_{i\infty}(s)\leq \sigma_{i\infty}(t)$ and any $s'\in S_i$ such that $s'\ll s$, then $\sigma_{ij}(s')\ll \sigma_{ij}(t)$ for some $j\geq i$.

We mention that a sequential version of the above characterization in the spirit of \cite[Section 2.1]{R12} can be deduced.
\begin{prop}
\label{prop:Cuslim}
Let $(S_i,\sigma_{ij})_i$ be an inductive system in $\Cus$ and let $(S,\sigma_{i\infty})_i$ be its inductive limit in $\Cu^\sim$. Then $S$ is a $\Cus$-semigroup and $\sigma_{i\infty}$ is a $\Cus$-morphism for any $i\in I$.

As a result, the category $\Cus$ has inductive limits, which are also characterized by (L1) - (L2).
\end{prop}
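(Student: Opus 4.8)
The plan is to exploit that $\Cus$ is a full subcategory of $\Cu^\sim$, for which inductive limits and their characterization by (L1)-(L2) are already available. It therefore suffices to prove that the $\Cu^\sim$-limit $(S,\sigma_{i\infty})_i$ of a system lying in $\Cus$ is itself a $\Cus$-semigroup, i.e. that $S$ inherits axioms (PC), (PD) and (S0). Granting this, fullness delivers the rest automatically: each $\sigma_{i\infty}$ is a $\Cu^\sim$-morphism between $\Cus$-objects, hence a $\Cus$-morphism; any competing $\Cus$-cocone is in particular a $\Cu^\sim$-cocone, and the unique comparison $\Cu^\sim$-morphism it induces is again a $\Cus$-morphism by fullness. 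Thus $S$ satisfies the universal property inside $\Cus$ and is characterized by (L1)-(L2). Throughout I would rely on three tools: the density of $\bigcup_i \sigma_{i\infty}(S_i)$ (by (O2) and (L1), every $s\in S$ is the supremum of a $\ll$-increasing sequence $(\sigma_{i_n\infty}(a_n))_n$), the compactness of the neutral element $0$, and the localization principle (L2).

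Axiom (PD) is the quick one. Given $s\in S$, I pick via (L1) a single image $\sigma_{i\infty}(a)\leq s$, use (PD) in $S_i$ to find $p\in S_i$ with $a+p\geq 0$, and apply the order- and sum-preserving map $\sigma_{i\infty}$ to get $\sigma_{i\infty}(a)+\sigma_{i\infty}(p)\geq 0$. Monotonicity of addition then gives $s+\sigma_{i\infty}(p)\geq \sigma_{i\infty}(a)+\sigma_{i\infty}(p)\geq 0$, so $p_s:=\sigma_{i\infty}(p)$ works.

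For (PC), assume $t\geq 0$ and $s\leq t$; it suffices to produce a single $u$ with $0\leq u\leq s+t$, since then $s+t\geq 0$ by transitivity. Compactness of $0$ applied to $0\leq t$ yields an image $\sigma_{k\infty}(c)\geq 0$ sitting below $t$, while $s\leq t$ forces some approximant $\sigma_{l\infty}(a')\ll s$ to satisfy $\sigma_{l\infty}(a')\leq \sigma_{k\infty}(c)$ in $S$. Localizing these two relations to a common finite stage $S_j$ by (L2) and applying (PC) there gives $\sigma_{lj}(a')+\sigma_{kj}(c)\geq 0$ in $S_j$; pushing back up, $u:=\sigma_{l\infty}(a')+\sigma_{k\infty}(c)$ satisfies $0\leq u\leq s+t$.

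The delicate axiom, and the main obstacle, is (S0). The naive strategy fails: from $s+t=0$ one obtains an honest equation only in the limit, and equalities do \emph{not} localize to finite stages in a topological ($\Cu$-type) colimit, so one cannot directly invoke (S0) in some $S_j$. The remedy is a squeezing argument that manufactures a genuine finite-stage equation. Writing $s+t=\sup_n \sigma_{i_n\infty}(a_n+b_n)=0$ along a common index sequence, each term is $\leq 0$, and compactness of $0$ yields $N$ with $\sigma_{i_N\infty}(a_N+b_N)\geq 0$, whence $\sigma_{i_n\infty}(a_n+b_n)=0$ for all $n\geq N$. Passing to the sequential form of (L1)-(L2) I may assume $\sigma_{i_n i_{n+1}}(a_n)\ll a_{n+1}$ and $\sigma_{i_n i_{n+1}}(b_n)\ll b_{n+1}$, so by compatibility of $+$ with $\ll$ we have $\sigma_{i_n i_{n+1}}(a_n+b_n)\ll a_{n+1}+b_{n+1}$. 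Applying (L2) to this way-below pair together with $\sigma_{i_{n+1}\infty}(a_{n+1}+b_{n+1})=0\leq 0$ localizes the inequality to $\sigma_{i_n j}(a_n+b_n)\leq 0$ at some stage $j$, while compactness of $0$ applied to $0\leq \sigma_{i_n\infty}(a_n+b_n)$ localizes the reverse inequality to $0\leq \sigma_{i_n j}(a_n+b_n)$ for $j$ large. Hence $\sigma_{i_n j}(a_n)+\sigma_{i_n j}(b_n)=0$ honestly in $S_j$, and now (S0) in $S_j$ forces $\sigma_{i_n j}(a_n)=\sigma_{i_n j}(b_n)=0$, so $\sigma_{i_n\infty}(a_n)=\sigma_{i_n\infty}(b_n)=0$; taking suprema over $n\geq N$ gives $s=t=0$. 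For the analogous statement about $\oMs$ I would only remark that the colimit is algebraic there, so equalities localize at once and (S0) drops out of (S0) at a single finite stage without the squeezing step.
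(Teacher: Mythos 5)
Your proposal is correct and follows essentially the same route as the paper: reduce everything to checking that (PC), (PD) and (S0) pass to the $\Cu^\sim$-limit by localizing to finite stages via (L1)--(L2) and the compactness of $0$, with fullness of $\Cus$ in $\Cu^\sim$ supplying the universal property. The only (cosmetic) divergence is in (S0), where the paper turns the localized relation $\sigma_{i_nj_n}(s_n)+\sigma_{i_nj_n}(t_n)\ll 0$ into an equality by invoking (PC) in $S_{j_n}$, whereas you first extract the equality $\sigma_{i_n\infty}(a_n+b_n)=0$ in the limit and then localize both inequalities separately.
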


\begin{proof} The strategy consists in showing that $S$ satisfies axioms (PC), (PD), (S0), in other words, that $S$ is a $\Cus$-semigroup. The rest of the proposition will follow from the fact that $\Cus$ is a full subcategory of $\Cu^\sim$.

(PC) Let $s\in S$ and let $t\in S_+$ such that $s\leq t$. Using (L1), we can find $s_i,t_i$ in some $S_i$ with $t_i\in (S_i)_+$ such that $\sigma_{i\infty}(s_i)\leq s, \sigma_{i\infty}(t_i)\leq t$ and $\sigma_{i\infty}(s_i)\ll \sigma_{i\infty}(t_i)$. Now, using (L2) we deduce that there exists $j\geq i$ such that $\sigma_{i\infty}(s_i)\ll \sigma_{i\infty}(t_i)$. It follows from axiom (PC) in $S_j$ that $\sigma_{i\infty}(s_i)+\sigma_{i\infty}(t_i)\geq 0$ and thus $s+t\geq 0$.

(PD) Let $s\in S$ and let $s_i\in S_i$ such that $\sigma_{i\infty}(s_i)\leq s$. (Such an element exists by   (L1).) It follows from axiom (PD) in $S_i$ that there exists $p\in S_i$ such that $s_i+p\geq 0$ and hence, $s+\sigma_{i\infty}(p)\geq \sigma_{i\infty}(s_i+p)\geq 0$.

(S0) Let $s,t\in S$ be such that $s+t=0$. Take two $\ll$-increasing sequences $(x_n)_n,(y_n)_n$ given by (O2) whose suprema are respectively $s,t$. Then for any $n\in\N$, one can find $s_n,t_n$ in some $S_{i_n}$ such that $x_n\ll\sigma_{i_n\infty}(s_n)\ll s$ and $y_n\ll\sigma_{i_n\infty}(t_n)\ll t$. Observe that $\sigma_{i_n\infty}(s_n)+\sigma_{i_n\infty}(t_n)\ll 0$ and we deduce from (L2) that $\sigma_{i_nj_n}(s_n)+\sigma_{i_nj_n}(t_n)\ll 0$ for some $j_n\geq i_n$. It follows from axioms (PC) and (S0) in $S_{j_n}$ that $\sigma_{i_nj_n}(s_n)=\sigma_{i_nj_n}(t_n)=0$. As a result, we obtain that $x_n,y_n\ll 0$ for any $n\in\N$. Lastly, using axiom (PC) in $S$ (that we have just proven to hold) we conclude that  $x_n=y_n=0_S$ for any $n\in\N$ and hence that $s=t=0$.
\end{proof}

As commented above, the arguments of the latter proof are valid in the (algebraic) case of ordered monoids and hence, the category $\oMs$ also have inductive limits. In the following theorem, $\CC$ stands for either $\PoS,\PoM$ or $\Cu$ and we conventionally write $\PoS^*$ to be the category $\PoS$.
\begin{thm}
\label{thm:webctn}
The webbing functors $\mathfrak{w}:\CG\longrightarrow \CC^*$ are continuous with respect to inductive limits.
\end{thm}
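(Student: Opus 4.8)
The plan is to show that $\mathfrak{w}$ sends the inductive limit cocone of \autoref{thm:limG} to an inductive limit cocone in $\CC^*$. Given an inductive system $((S_i,G_i),(\alpha_{ij},\eta_{ij}))_{i\in I}$ in $\CG$ with $\CG$-limit $((S_\infty,G_\infty),(\alpha_{i\infty},\eta_{i\infty}))_i$, applying the functor $\mathfrak{w}$ produces a cocone $((S_\infty)_{G_\infty},(\alpha_{i\infty})_{\eta_{i\infty}})_i$ over the image system $((S_i)_{G_i},(\alpha_{ij})_{\eta_{ij}})_i$ in $\CC^*$; the whole task is to prove that this cocone \emph{is} the inductive limit. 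Throughout I would use the description of the way-below relation on a webbing from \autoref{prop:cc}. I would treat the algebraic cases $\CC\in\{\PoS,\PoM\}$ and the case $\CC=\Cu$ separately, since only the latter carries a genuine topological (completion) aspect.

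For $\CC=\PoS$ and $\CC=\PoM$, inductive limits in $\CC^*=\PoS,\oMs$ are algebraic, i.e. computed as filtered colimits of underlying sets with the induced order and sum (here $\oMs$ is closed under $\oM$-limits because (PC), (PD) and (S0) pass to inductive limits, as noted after \autoref{prop:Cuslim}). In these cases $S_\infty=\bigcup_i\alpha_{i\infty}(S_i)$ and $G_\infty=G_\infty^{\alg}$, so every $(s,g)\in(S_\infty)_{G_\infty}$ already appears at a finite stage: first lift $s$ to some $s_i\in S_i$, then, since $G_\infty(\alpha_{i\infty}(s_i))=\varinjlim_j G_j(\alpha_{ij}(s_i))$ is an algebraic colimit in $\AbGp$, lift $g$ to some $g_j\in G_j(\alpha_{ij}(s_i))$. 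A direct verification then shows that the canonical comparison map from the algebraic colimit of the $(S_i)_{G_i}$ to $(S_\infty)_{G_\infty}$ is a bijection preserving and reflecting both order and sum, hence a $\CC^*$-isomorphism, which gives continuity.

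The case $\CC=\Cu$ is the heart of the matter. By \autoref{prop:Cuslim}, the system $((S_i)_{G_i},(\alpha_{ij})_{\eta_{ij}})_i$ lies in $\Cus$ and its $\Cus$-limit agrees with its $\Cu^\sim$-limit, which is characterized by axioms (L1) and (L2). Since the earlier propositions already guarantee $(S_\infty)_{G_\infty}\in\Cus$ with $\Cus$-morphisms $(\alpha_{i\infty})_{\eta_{i\infty}}$, it suffices to check that the cocone satisfies (L1) and (L2). For (L1), given $(s,g)\ll(t,h)$, \autoref{prop:cc} yields $s\ll t$ and $G_\infty(s\leq t)(g)=h$; applying (L1) for the $\Cu$-limit $(S_\infty,\alpha_{i\infty})$ produces $s_i\in S_i$ with $s\ll\alpha_{i\infty}(s_i)\ll t$, and I would set $h':=G_\infty(s\leq\alpha_{i\infty}(s_i))(g)$ and lift it, using algebraicity of $\varinjlim_j G_j(\alpha_{ij}(s_i))$, to some $g_j\in G_j(\alpha_{ij}(s_i))$. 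The element $(\alpha_{ij}(s_i),g_j)\in(S_j)_{G_j}$ has cocone image $(\alpha_{i\infty}(s_i),h')$, and \autoref{prop:cc} together with $G_\infty(\alpha_{i\infty}(s_i)\leq t)(h')=G_\infty(s\leq t)(g)=h$ shows $(s,g)\ll(\alpha_{i\infty}(s_i),h')\ll(t,h)$, giving (L1).

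For (L2), given $(s,g),(t,h)\in(S_i)_{G_i}$ with $(\alpha_{i\infty})_{\eta_{i\infty}}(s,g)\leq(\alpha_{i\infty})_{\eta_{i\infty}}(t,h)$ and $(s',g')\ll(s,g)$, the hypotheses unwind to $\alpha_{i\infty}(s)\leq\alpha_{i\infty}(t)$, $G_\infty(\alpha_{i\infty}(s)\leq\alpha_{i\infty}(t))((\eta_{i\infty})_s(g))=(\eta_{i\infty})_t(h)$, $s'\ll s$ and $G_i(s'\leq s)(g')=g$. Axiom (L2) for the $\Cu$-limit gives $\alpha_{ij}(s')\ll\alpha_{ij}(t)$ for all large $j$, while the group equality, being an identity in the algebraic colimit $G_\infty(\alpha_{i\infty}(t))=\varinjlim_j G_j(\alpha_{ij}(t))$ between images of two elements defined at stage $i$, already holds at some finite stage $j$ by filteredness. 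Combining these with naturality of $\eta_{ij}$ and $G_i(s'\leq s)(g')=g$ yields $G_j(\alpha_{ij}(s')\leq\alpha_{ij}(t))((\eta_{ij})_{s'}(g'))=(\eta_{ij})_t(h)$, which with $\alpha_{ij}(s')\ll\alpha_{ij}(t)$ is exactly (L2) via \autoref{prop:cc}. The main obstacle is precisely this repeated descent to a finite stage in the $\Cu$ case: because $S_\infty$ is only the closure of $\bigcup_i\alpha_{i\infty}(S_i)$ and $G_\infty$ is obtained by completing $G_\infty^{\alg}$, one must consistently exploit that on the dense algebraic part $G_\infty$ coincides with the genuine filtered $\AbGp$-colimit $G_\infty^{\alg}$, whose algebraicity is what allows equalities and liftings at the limit to be realized at a finite index.
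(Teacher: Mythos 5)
Your proposal is correct, but it takes a genuinely different route from the paper's proof. The paper forms the $\CC^*$-inductive limit $(S,\sigma_{i\infty})_i$ of the webbed system (available by \autoref{prop:Cuslim}), obtains the canonical comparison morphism $\gamma\colon S\longrightarrow (S_\infty)_{G_\infty}$ from the universal property, and then proves by hand that $\gamma$ is a surjective order-embedding: injectivity by approximating comparable elements with $\ll$-increasing sequences coming from finite stages and pushing the inequalities down via (L2) together with the algebraicity of the $\AbGp$-colimits; surjectivity first on the dense part $\bigcup_{i}\alpha_{i\infty}(S_i)$ by lifting group elements, then in general by taking suprema. You instead bypass the comparison map entirely and verify that the cocone $\mathfrak{w}\bigl((S_\infty,G_\infty),(\alpha_{i\infty},\eta_{i\infty})\bigr)_i$ itself satisfies the characterization (L1)--(L2) of $\Cus$-limits, which is legitimate precisely because \autoref{prop:Cuslim} makes that characterization available for the webbed system. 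Both arguments rest on the same two pillars, namely \autoref{prop:cc} and the fact that $G_\infty$ restricted to $\bigcup_i\alpha_{i\infty}(S_i)$ is a genuine algebraic $\AbGp$-colimit; your route buys a shorter argument by absorbing the density-plus-suprema step of the surjectivity proof into the axioms (L1)--(L2), while the paper's route exhibits the isomorphism explicitly. One small imprecision in your (L2) step: the element $G_\infty(\alpha_{i\infty}(s)\leq\alpha_{i\infty}(t))\bigl((\eta_{i\infty})_s(g)\bigr)$ is not literally the image of a stage-$i$ element of $G_i(t)$ under the canonical colimit map, since there is no arrow $s\leq t$ at stage $i$; one must first rewrite it, using $g=G_i(s'\leq s)(g')$ and naturality, as the image of $G_k(\alpha_{ik}(s')\leq\alpha_{ik}(t))\bigl((\eta_{ik})_{s'}(g')\bigr)$ for some $k$ with $\alpha_{ik}(s')\leq\alpha_{ik}(t)$ (supplied by the base-space (L2)), and only then invoke algebraicity of the filtered colimit to obtain equality at a later finite stage. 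Since you explicitly name all of these ingredients, this is a matter of ordering the steps rather than a gap.
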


\begin{proof}
Let $( (S_i,G_i), (\alpha_{ij}, \eta_{ij}) )_{i\in I}$ be an inductive system in $\CC_{*{\Gp}}$ and let $((S_\infty,G_\infty),(\alpha_{i\infty},\eta_{i\infty}))_i$ be its inductive limit. The webbing functor induces an inductive system  in $\CC^*$ and let $(S,\sigma_{i\infty})_i$ its inductive limit in $\CC^*$. 
For notational purposes, we write ${(S_G)}_{i}:= \mathfrak{w}(S_i,G_i)$ instead of ${(S_i)}_{G_i}$ and ${(\alpha_\eta)}_{ij} :=\mathfrak{w}(\alpha_{ij},\eta_{ij})$ instead of $(\alpha_{ij})_{\eta_{ij}}$. Similarly for the limit. 

The webbing functor also induces a cocone $( {(S_G)}_{\infty}, ({(\alpha_\eta)}_{ij} )_i$ for the latter inductive system in $\CC^*$. By the universal property, there exists a unique $\CC^*$- morphism $\gamma:S\longrightarrow ({S_\infty})_{G_\infty}$ such that the following diagram commutes
\[
\xymatrix{
{(S_G)}_{i}\ar[dd]_{{(\alpha_\eta)}_{ij}}\ar[rd]^{\sigma_{i\infty}}\ar@/^2pc/[rrd]^{{(\alpha_\eta)}_{i\infty}} &  &\\
& S\ar@{.>}[r]_{\hspace{-0,9cm}\exists!\, \gamma}&  {(S_G)}_{\infty}\\
{(S_G)}_{j}\ar[ru]_{\sigma_{j\infty}}\ar@/_2pc/[rru]_{{(\alpha_\eta)}_{j\infty}}  & &
} 
\]
To complete the proof, let us show that $\gamma$ is a surjective order-embedding. We establish the property for the case $\CC=\Cu$ and the other (easier) cases are proven similarly. We start by stating a fact that we use several times in the sequel and that is obtained after combining (O2) and the characterization (L1) - (L2).

\textit{Fact}: Any element in $S$ can be approximated by an ultimately $\ll$-increasing sequence $(x_n)_n$ where $x_n:=\sigma_{i_n\infty}(s_n,g_n)$ and such that $(\alpha_\eta)_{i_ni_{n+1}}(	s_n,g_n)\ll (s_{n+1},g_{n+1})$ in ${(S_G)}_{i_{n+1}}$. 

Injectivity: Let $s,t\in S$ be such that $\gamma(s)\leq\gamma(t)$. Consider $(x_n)_n$ and $(y_n)_n$ as in the claim, where $x_n=\sigma_{i_n\infty}(s_n,g_n)$ and $y_n=\sigma_{j_n\infty}(t_n,h_n)$. Let $n\in\N$. We know that $(\gamma(y_n))_n$ is $\ll$-increasing towards $\gamma(t)$ and $\gamma(x_n)\ll \gamma(t)$. Thus, there exists some $m\in\N$ such that $\gamma(x_n)\ll \gamma(y_m)$. By the commutativity of the above diagram, this is equivalent to saying 
\[
\left\{
\begin{array}{ll}
\alpha_{i_n\infty}(s_n)\ll \alpha_{j_m\infty}(t_m)\\
\,[G_\infty(\alpha_{i_n\infty}(s_n)\leq \alpha_{j_m\infty}(t_m))\circ(\eta_{i_n\infty})_{s_n}](g_n)= (\eta_{j_m\infty})_{t_n}(h_n)
\end{array}
\right.
\]
Using (L2), we can find some $k\in I$ big enough such that $\alpha_{i_nk}(s_n)\ll \alpha_{j_mk}(t_m)$. Further, we know that the following inductive system is commutative and that the limits are algebraic. 
\[
\xymatrix{
G_{i_n}(s_n)\ar[rr]^{(\eta_{i_nk})_{s_n}} && G_k(\alpha_{i_nk}(s_n))\ar[d]^{G_k(\leq)}\ar[r]^{} &\dots\ar[r]^{}  & G_\infty(\alpha_{i_nk}(s_n))\ar[d]^{G_\infty(\leq)} \\
G_{j_m}(t_m)\ar[rr]^{(\eta_{j_mk})_{t_m}} && G_k(\alpha_{j_mk}(t_m))\ar[r]^{} &\dots\ar[r]^{}  & G_\infty(\alpha_{i_nk}(s_n))
}
\]
Therefore, we can find some $l\in I$ big enough such that \[[(\eta_{kl})_{\alpha_{j_mk}(t_m)}\circ G_k(\alpha_{i_nk}(s_n)\leq \alpha_{j_mk}(t_m))\circ(\eta_{i_nk})_{s_n}](g_n)= (\eta_{j_ml})_{t_n}(h_n)
\]
Putting everything together, we have proven that $({\alpha_\eta})_{i_nl}(x_n)\leq ({\alpha_\eta})_{j_ml}(y_m)$. Applying $\sigma_{l\infty}$ on both sides, we get that $\sigma_{i_n\infty}(x_n)\leq \sigma_{j_m\infty}(y_m)$ in $S$. Passing to suprema first right, then left we finally conclude that $x\leq y$ and hence, that $\gamma$ is an order-embedding. 

Surjectivity: Let $(s_\infty,g_\infty)\in {(S_G)}_\infty$. Assume in a first instance that $s_\infty\in \bigcup\limits_{i\in I}\alpha_{i\infty}(S_i)$. That is, $s_\infty=\alpha_{i\infty}(s)$ for some $s\in S_i$.    A fortiori, $g_\infty$ belongs $G_\infty(\alpha_{i\infty}(s_i))$ which is the (algebraic) inductive limit of 
\[
\xymatrix{
G_i(s)\ar[r]^{(\eta_{ij})_s} &G(\alpha_{ij}(s))\ar[rr]^{(\eta_{jk})_{\alpha_{ij}(s)}} && G_k(\alpha_{ik}(s))\ar[r]^{}&\dots  & 
}
\]
Therefore, we can find some $g\in G_j(\alpha_{ij}(s))$ such that $(\eta_{j\infty})_{\alpha_{ij}(s)}(g)=g_\infty$. Overall, we have find an element $(\alpha_{ij}(s),g)\in {(S_G)}_j$ such that $({\alpha_\eta})_{j\infty}(\alpha_{ij}(s),g)=(s_\infty,g_\infty)$. By commutativity of the former diagram, this is equivalent to saying that $\gamma\circ\sigma_{j\infty}(\alpha_{ij}(s),g)=(s_\infty, g_\infty)$. We conclude that any element in $\bigcup\limits_{i\in I}{(\alpha_\eta)}_{i\infty}(({S_G})_i)$ has an antecedent. Next, let us deal with the general case. Let $(s_\infty,g_\infty)\in {(S_G)}_\infty$. It can be easily verified that $\bigcup\limits_{i\in I}{(\alpha_\eta)}_{i\infty}(({S_G})_i)$ is dense in ${(S_G)}_\infty$. In particular, there exists an increasing sequence $(x_n)_n$ in $\bigcup\limits_{i\in I}{(\alpha_\eta)}_{i\infty}(({S_G})_i)$ whose supremum is $(s_\infty,g_\infty)$. Applying what we have just proved, we can find $(s_n,g_n)\in {(S_G)}_{i_n}$ such that $x_n={(\alpha_\eta)}_{i_n\infty}(s_n,g_n)$ for any $n\in\N$. (We can even arrange that $i_n\leq i_{n+1}$.) Let us write $z_n:=\sigma_{i_n\infty}(s_n,g_n)$. The sequence $(z_n)_n$ is an increasing sequence in $S$ and hence, its supremum $z$ exists. Lastly, by commutativity of the former diagram, we easily conclude that $\gamma(z)=(s_\infty,g_\infty)$ which proves that $\gamma$ is a $\Cus$-isomorphism.
\end{proof}

\section{Ideals and quotients in the category \texorpdfstring{$\Cus$}{Cu*}}
This section is dedicated to the study of the ideal structure of abstract $\Cus$-semigroups together with remarkable substructures such as the sets of positive and maximal elements. We are also interested in quotients by ideals and exact sequences that link all the above together.  A theory of ideals and quotients has already been developed in \cite{C21b} for $\Cu^\sim$-semigroups satisfying axioms (PC) and (PD), modeled after the one for $\Cu$-semigroups developed in \cite{CRS10}. Recall that $\Cus$ is the full subcategory of $\Cu^\sim$ whose objects are satisfying axioms (PC), (PD) and (S0). Therefore, in the sequel we overall transpose most of these results to abstract $\Cus$-semigroups and we will provide extra-details when needed.

\vspace{0,2cm}$\hspace{-0,34cm}\bullet\,\,\textbf{The positive cone and the maximal elements}$.
Let $S$ be an abstract $\Cus$-semigroup. We naturally define its positive cone $S_+$ and the set of its maximal elements $S_{\max}$ by 
\[
\begin{array}{ll}
S_+:=\{s\in S \mid  0\leq s\}\\
S_{\max}:=\{s\in S \mid \text{ if } t\geq s, \text{ then } t=s\}
\end{array}
\]
While it is obvious that $S_+$ is a $\Cu$-semigroup, it is not clear whether $S_{\max}$ has a remarkable structure. We expose is the next proposition, which falls from \cite[Proposition 5.4]{C21a} and \cite[Lemma 3.8]{C21b}, that $S_{\max}$ is either empty or has an abelian group structure.  \\

\begin{prop}
Let $S$ be a $\Cus$-semigroup. The following are equivalent

(i) $S_{\max}$ is not empty, i.e. $S$ has maximal elements.

(ii) $S_+$ has a largest element.

(iii) $S_{\max}$ is an absorbing abelian group.\\
Moreover, if one of the above is true, then

(iv) The neutral element of $S_{\max}$ is the largest element of $S_+$.

(v) For any $x\in S$, there exists a unique $p_x\in S_{\max}$ such that $x+p_x=e_{S_{\max}}\geq 0$.
\end{prop}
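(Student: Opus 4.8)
The plan is to single out the largest element of $S_+$ as the prospective group identity, establish two absorption phenomena, and then read off everything else. Throughout I use that $S$ is an ordered \emph{commutative} monoid, so addition is order-compatible and commutative. The basic observation, used repeatedly, is \emph{absorption of positives}: if $m\in S_{\max}$ and $x\in S_+$, then $m+x=m$, since $x\geq 0$ forces $m+x\geq m$ and maximality gives equality. For (i)$\Rightarrow$(ii), I take $m\in S_{\max}$ and, using (PD), choose $p_m$ with $u:=m+p_m\geq 0$, so $u\in S_+$; for any $t\in S_+$ absorption of positives gives $m+t=m$, whence
\[
u+t=(m+p_m)+t=(m+t)+p_m=m+p_m=u,
\]
and as $u\geq 0$ this yields $t\leq u+t=u$, so $u$ is largest in $S_+$. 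For (ii)$\Rightarrow$(i), let $u$ be the largest element of $S_+$: if $t\geq u$ then $t\geq u\geq 0$, so $t\in S_+$ and $t\leq u$, i.e. $t=u$; hence $u$ is maximal and $S_{\max}\neq\emptyset$. The same argument gives $S_+\cap S_{\max}=\{u\}$, and $u+u=u$ by absorption.

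The crux — and the step I expect to be the main obstacle — is the absorbing property $S+S_{\max}\subseteq S_{\max}$, because $S$ need not admit cancellation. Given $x\in S$ and $m\in S_{\max}$, I pick $p_x$ with $a:=x+p_x\geq 0$ by (PD), so $m+a=m$. Suppose $t\geq x+m$; adding $p_x$ gives $t+p_x\geq x+m+p_x=m$, and maximality forces $t+p_x=m$. The decisive move is to add $x$ back rather than try to cancel $p_x$:
\[
t+a=t+p_x+x=m+x=x+m,
\]
so $x+m=t+a\geq t$ since $a\geq 0$; combined with $t\geq x+m$ this gives $t=x+m$, proving $x+m\in S_{\max}$.

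With the absorbing property available, (iii) is essentially immediate. Taking $x\in S_{\max}$ shows $S_{\max}$ is closed under $+$; $u$ is a neutral element by absorption of positives (as $u\in S_+$); and for $m\in S_{\max}$ its inverse is $p_m+u$, which lies in $S_{\max}$ by absorption and satisfies $m+(p_m+u)=(m+p_m)+u=u+u=u$, where $m+p_m=u$ because $m+p_m\in S_+\cap S_{\max}=\{u\}$. Commutativity is inherited from $S$, so $S_{\max}$ is an abelian group under the restricted addition that moreover absorbs all of $S$, giving (iii); the identification of its neutral element with the largest element of $S_+$ is exactly (iv). Finally, since a group is nonempty, (iii)$\Rightarrow$(i) is trivial, closing the cycle (i)$\Rightarrow$(ii)$\Rightarrow$(iii)$\Rightarrow$(i).

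For (v), fix $x\in S$, choose $q$ with $x+q\geq 0$ via (PD), and set $p_x:=q+u\in S_{\max}$; then $x+p_x=(x+q)+u=u$ by absorption of positives, since $x+q\in S_+$ and $u$ is maximal, which gives existence (and $e_{S_{\max}}=u\geq 0$ by (iv)). Uniqueness follows from the group structure: if $x+m_1=u=x+m_2$ with $m_1,m_2\in S_{\max}$, then adding the inverse $m_i'$ of $m_i$ yields $x+u=m_i'$ in each case, so $m_1'=m_2'$ and hence $m_1=m_2$ by injectivity of inversion.
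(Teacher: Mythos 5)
Your proof is correct. Note that the paper itself offers no argument for this proposition: it is stated as a consequence of \cite[Proposition 5.4]{C21a} and \cite[Lemma 3.8]{C21b}, so there is no in-text proof to compare against. Your self-contained derivation is a legitimate alternative to that citation, and it is pleasantly economical: everything is extracted from axiom (PD) together with order-compatibility of addition and antisymmetry, without ever invoking (PC) or (S0). The two genuinely non-trivial points are handled well. First, the identification of the candidate identity $u=m+p_m$ and the computation $u+t=u$ for $t\in S_+$ correctly yields that $u$ is the largest positive element. Second, and more importantly, your proof of the absorbing property $S+S_{\max}\subseteq S_{\max}$ correctly sidesteps the lack of cancellation: after forcing $t+p_x=m$ by maximality, you add $x$ back to get $t+a=x+m$ with $a\geq 0$, which gives the reverse inequality $x+m\geq t$ and closes the argument by antisymmetry. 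The remaining steps (group structure on $S_{\max}$ with inverse $p_m+u$, the identity $e_{S_{\max}}=u$, and existence/uniqueness in (v) via the group inverse) all follow correctly from these two facts and the observation $S_+\cap S_{\max}=\{u\}$.
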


Recall that an abstract $\Cus$-semigroup $S$ is said to be \emph{countably-based} whenever there exists a countable subset $B$ such that, for any $s,t\in S$ with $s\ll t$ there exists $b\in B$ with $s\leq b\ll t$. Equivalently, any element $s\in S$ can be written as the supremum of a $\ll$-increasing sequence of elements of $B$.

It is a well-known fact that any countably-based $\Cu$-semigroup has a largest element. In particular, any countably-based $\Cus$-semigroup $S$ has a countably-based positive cone and hence, has maximal elements. ($S_{\max}$ contains at least a neutral element, which happens to be the largest element of $S_+$.) 
In the sequel, whenever mentioning or using $S_{\max}$, we shall explicitly recall (as much as possible) or implicitly assume that $S$ has maximal elements. In particular, the functor $\nu_{max}$ of the next theorem is formally defined from the subcategory of $\Cus$ whose objects have maximal elements, that we also write $\Cus$. 
\begin{thm}
There exist continuous functors (with respect to inductive limits)
\[
\begin{array}{ll}
	\nu_{+}: \Cus \longrightarrow \Cu		
	\hspace{3,3cm}\nu_{\max}: \Cus \longrightarrow \AbGp\\
		\hspace{1,1cm}S \longmapsto S_{+}
		\hspace{4,75cm} S \longmapsto S_{\max}\\
		\hspace{1,15cm}\alpha \longmapsto \alpha_{+}
		\hspace{4,7cm} \alpha \longmapsto \alpha_{\max}
\end{array}
\]
where $\alpha_+:S_+\longrightarrow T_+$ and $\alpha_{\max}:S_{\max}\longrightarrow T_{\max}$ are obtained by restrictions of $\alpha:S\longrightarrow T$ to their respective (co)domains and defined by $\alpha_{+}:=\alpha$ and $\alpha_{\max}:=\alpha + e_{T_{\max}}$.
\end{thm}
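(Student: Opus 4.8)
The plan is to verify that both assignments are functors, and then to prove continuity by identifying $S_+$ (resp. $S_{\max}$) with the $\Cu$-limit (resp. the $\AbGp$-colimit) of the restricted systems, throughout using the characterization (L1)--(L2) of $\Cus$-limits from \autoref{prop:Cuslim}. \textbf{Functoriality.} For $\nu_+$ there is almost nothing to check: since a $\Cus$-morphism $\alpha$ preserves $0$ and the order it restricts to a map $\alpha_+:S_+\to T_+$, and the $\Cu$-morphism axioms for $\alpha_+$ are inherited from those of $\alpha$ together with the fact (noted before the statement) that $S_+$ is a $\Cu$-semigroup. For $\nu_{\max}$ the content is to see that $\alpha_{\max}(s):=\alpha(s)+e_{T_{\max}}$ is a well-defined group morphism. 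First, $\alpha(s)+e_{T_{\max}}\in T_{\max}$ because $T_{\max}$ is absorbing. Second, $\alpha_{\max}$ is additive: both $\alpha_{\max}(s+s')$ and $\alpha_{\max}(s)+\alpha_{\max}(s')$ reduce to $\alpha(s)+\alpha(s')+e_{T_{\max}}$, the only point being the identity $e_{T_{\max}}+e_{T_{\max}}=e_{T_{\max}}$, valid since $e_{T_{\max}}$ is the neutral element of the group $T_{\max}$. The same identity in the form $\beta(e_{T_{\max}})+e_{V_{\max}}=e_{V_{\max}}$ (which holds because $\beta(e_{T_{\max}})\in V_+$ sits below, hence is absorbed by, $e_{V_{\max}}$) yields $(\id)_{\max}=\id$ and $(\beta\circ\alpha)_{\max}=\beta_{\max}\circ\alpha_{\max}$.

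\textbf{Continuity of $\nu_+$.} Let $(S,\sigma_{i\infty})$ be the limit of a system. I would show that $(S_+,(\sigma_{i\infty})_+)$ satisfies (L1)--(L2) for the restricted system. Axiom (L2) is immediate, as the order, the relation $\ll$, and suprema of increasing sequences in $S_+$ agree with those computed in $S$. For (L1), a pair $s'\ll s$ in $S_+$ lifts via (L1) in $S$ to some $x\in S_i$ with $s'\ll\sigma_{i\infty}(x)\ll s$; one then replaces $x$ by a positive representative using that $0\ll 0$ together with (L2) forces $0\ll\sigma_{ij}(x)$, i.e. $\sigma_{ij}(x)\in(S_j)_+$, for $j$ large enough.

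\textbf{Continuity of $\nu_{\max}$.} I would first record that $e_{S_{\max}}=\sup_k u_k$ with $u_k:=\sigma_{k\infty}(e_{(S_k)_{\max}})$ an increasing sequence, which follows from the continuity of $\nu_+$ just proven (every element of $S_+$ is a supremum of images of positive elements from finite stages, each dominated by the corresponding top). Recognizing the $\AbGp$-colimit then reduces to joint surjectivity of the $(\sigma_{i\infty})_{\max}$ and to the descent of relations, and the device for both is the elementary remark that \emph{an increasing sequence of maximal elements is constant}. For surjectivity, given $m\in S_{\max}$, approximate it by a $\ll$-increasing sequence $(a_n)$ via (O2), lift each $a_n$ through (L1) to $\sigma_{i_n\infty}(b_n)$ (arranged increasing), and maximalize to $M_n:=b_n+e_{(S_{i_n})_{\max}}$; then $(\sigma_{i_n\infty})_{\max}(M_n)=\sigma_{i_n\infty}(b_n)+e_{S_{\max}}$ is an increasing sequence of maximal elements with supremum $m$, hence constant equal to $m$, so $m$ is attained at a finite stage. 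For the descent, suppose $(\sigma_{i\infty})_{\max}(m_i)=e_{S_{\max}}$; fix a $\ll$-increasing $(a_p)$ with supremum $m_i$ and put $a_p^{\max}:=a_p+e_{(S_i)_{\max}}\in(S_i)_{\max}$. The crucial observation $(\star)$ is that for every $j\geq i$ the elements $(\sigma_{ij})_{\max}(a_p^{\max})=\sigma_{ij}(a_p)+e_{(S_j)_{\max}}$ form an increasing sequence of maximal elements with supremum $(\sigma_{ij})_{\max}(m_i)$ (using (O4) and that $\sigma_{ij}$ preserves suprema), hence are all equal to $(\sigma_{ij})_{\max}(m_i)$. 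It therefore suffices to trivialize one $a_p^{\max}$: the hypothesis gives $\sigma_{i\infty}(a_p)\ll\sigma_{i\infty}(m_i)\leq e_{S_{\max}}=\sup_k u_k$, whence $\sigma_{i\infty}(a_p)\leq u_k$ for some $k$, and feeding the predecessor $a_{p-1}\ll a_p$ into (L2) yields $j$ with $\sigma_{ij}(a_{p-1})\leq e_{(S_j)_{\max}}$, so that $(\sigma_{ij})_{\max}(a_{p-1}^{\max})=e_{(S_j)_{\max}}$ and, by $(\star)$, $(\sigma_{ij})_{\max}(m_i)=e_{(S_j)_{\max}}$.

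\textbf{Main obstacle.} The delicate point is exactly this last descent. A plain inequality such as $\sigma_{i\infty}(m_i)\leq e_{S_{\max}}$ need not descend to any finite stage, so one cannot simply reflect the vanishing of $m_i$ downwards. The way around it is observation $(\star)$: because the image $(\sigma_{ij})_{\max}(m_i)$ can be computed from \emph{any} maximalized approximant $a_p^{\max}$, the group-theoretic vanishing is reduced to a genuine compact-containment relation $\sigma_{i\infty}(a_p)\ll e_{S_{\max}}$, which is precisely the type of relation that (L2) transports to a finite stage. Once surjectivity and this no-new-relations property are in hand, the cocone $(S_{\max},(\sigma_{i\infty})_{\max})$ is the colimit in $\AbGp$, giving continuity of $\nu_{\max}$.
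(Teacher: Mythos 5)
Your proposal is correct and follows the same strategy as the paper: identify $S_+$ and $S_{\max}$ with the limits of the restricted systems via the characterization of inductive limits, then run a surjectivity/no-new-relations argument. The paper's own proof only sketches this step (deferring to the proof of \autoref{thm:webctn} and to \cite{C21a} for functoriality), and your filled-in details — in particular the observation that an increasing sequence of maximal elements is constant, which reduces the kernel descent for $\nu_{\max}$ to a compact-containment relation that (L2) transports to a finite stage — are exactly the right way to complete the argument the paper leaves to the reader.
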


\begin{proof}
The fact that the above assignments are well-defined functors between their respective categories is almost immediate and done e.g. in \cite[Lemma 3.9 - Proposition 5.5]{C21a}. Here, we prove the continuity of these functors. Let $(S_i,\sigma_{ij})_i$ be an inductive system in $\Cus$ and let $(S,\sigma_{i\infty})_i$ be its inductive limit in $\Cus$. Abusing notations, observe that $\nu_+((S,\sigma_{i\infty})_i)$ and $\nu_{\max}((S,\sigma_{i\infty})_i)$ are respective cocones for the respective induced systems $\nu_+((S_i,\sigma_{ij})_i)$ and $\nu_{\max}((S_i,\sigma_{ij})_i)$. Write $(T,\alpha_{i\infty})_i$ and $(G,\beta_{i\infty})_i$ the respective inductive limits of the latter inductive systems in their respective categories. More specifically $(T,\alpha_{i\infty})_i$ is the $\Cu$-limit of $\nu_+((S_i,\sigma_{ij})_i)$ and $(G,\beta_{i\infty})_i$ is the $\AbGp$-limit of $\nu_{\max}((S_i,\sigma_{ij})_i)$. By universal property of the inductive limit, we obtain a $\Cu$-morphism $\alpha:T\longrightarrow S_+$ and a $\AbGp$-morphism $\beta:G\longrightarrow S_{\max}$ compatible with the cocones $\nu_+((S,\sigma_{i\infty})_i)$ and $\nu_{\max}((S,\sigma_{i\infty})_i)$. We are left to show an injective-surjective type of argument. This is done similarly as in the proof of \autoref{thm:webctn}. In fact, the very same arguments are easily applied in this simpler context and we left the reader to check them.
\end{proof}

We next expose a split-exact sequence in $\Cus$ that involves a $\Cus$-semigroup with maximal elements, its positive cone $S_+$ and its maximal elements $S_{\max}$. Observe that the category $\Cus$ is not an abelian category and one has to be cautious when mentioning kernels, cokernels and exactness of a sequence. We recall some facts from \cite[\S 4.6]{C21b}.

\begin{dfn}
Let $S,T$ be $\Cus$-semigroups and let $\alpha:S\longrightarrow T$ be $\Cus$-morphism. We define 
\[
\begin{array}{ll}
\im \alpha:= \{ (t_1,t_2)\in T\times T \mid \text{there exists } s\in S \text{ with } t_1\leq \alpha(s)+t_2 \}\\
\ker \beta:= \{ (s_1,s_2)\in S \times S\mid \alpha(s_1)\leq \alpha(s_2) \}
\end{array}
\]
A sequence $\hspace{0,3cm}  \dots\,\longrightarrow S\overset{\alpha}\longrightarrow T\overset{\beta}\longrightarrow V\longrightarrow\,\dots \hspace{0,3cm}$ in the category $\Cus$ is \emph{exact at $T$} if $\ker \beta= \im \alpha$.  A short sequence $\hspace{0,3cm} 0\longrightarrow S\overset{\alpha}\longrightarrow T\overset{\beta}\longrightarrow V\longrightarrow 0\hspace{0,3cm}$ in the category $\Cus$ is \emph{exact} if it is exact everywhere. If moreover there exists a $\Cus$-morphism $\rho:V\longrightarrow T$ such that $\beta\circ\rho=\id_V$, we say that the short sequence is \emph{split-exact}.
\end{dfn}

These notions of kernel and exactness allow to rewrite order-embeddings and exhaustive maps via exact sequences, as it is usually done in abelian categories. Namely, we have

\begin{prop}\label{prop:seqinjsur} \cite[Proposition 4.8]{C21b} Let $S,T$ be $\Cus$-semigroups and let $\alpha:S\longrightarrow T$ be $\Cus$-morphism. Then

(i) $\alpha$ is an order-embedding if and only if $0\longrightarrow S\overset{\alpha}\longrightarrow T$ is exact.

(ii) $\alpha$ is surjective if and only if $ S\overset{\alpha}\longrightarrow T\longrightarrow 0$ is exact and $\alpha(S)$ is order-hereditary in $T$.
\end{prop}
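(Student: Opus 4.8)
The plan is to unwind the definitions of $\im$ and $\ker$ for the two degenerate maps appearing in the sequences (the zero map into $S$ and the zero map out of $T$) and to match the resulting binary relations against the definitions of order-embedding and of surjectivity. Apart from this bookkeeping, the only substantial ingredient will be axiom (PD), which I expect to be needed in one direction of (ii) to compensate for the lack of positivity in a general $\Cus$-semigroup.

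For (i), I would first compute the image of the zero morphism $\iota\colon 0\longrightarrow S$. The defining formula yields $\im\iota=\{(s_1,s_2)\in S\times S\mid s_1\leq s_2\}$, since the unique element of the zero object is sent to $0_S$ and $0_S+s_2=s_2$. On the other hand $\ker\alpha=\{(s_1,s_2)\mid \alpha(s_1)\leq\alpha(s_2)\}$. The inclusion $\im\iota\subseteq\ker\alpha$ is automatic because $\alpha$ is order-preserving, so exactness at $S$ (namely $\ker\alpha=\im\iota$) reduces to the reverse inclusion $\alpha(s_1)\leq\alpha(s_2)\Rightarrow s_1\leq s_2$, which is exactly the definition of an order-embedding.

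For (ii), I would unwind exactness at $T$ of $S\overset{\alpha}\longrightarrow T\overset{\beta}\longrightarrow 0$. As $\beta$ is the zero map we have $\ker\beta=T\times T$, so exactness reads $\im\alpha=T\times T$, i.e. for all $t_1,t_2\in T$ there is $s\in S$ with $t_1\leq\alpha(s)+t_2$. Assuming this together with order-heredity of $\alpha(S)$, I would specialise to $t_2=0$ to get $t_1\leq\alpha(s)$ for some $s$; since $\alpha(S)$ is downward closed and $\alpha(s)\in\alpha(S)$, this forces $t_1\in\alpha(S)$, whence $\alpha$ is surjective. Conversely, if $\alpha$ is surjective then $\alpha(S)=T$ is trivially order-hereditary, and for exactness, given $t_1,t_2$ I would write $t_1=\alpha(s_1)$ and $t_2=\alpha(s_2)$ and apply (PD) in $S$ to obtain $p$ with $s_2+p\geq 0$; then $s:=s_1+p$ satisfies $\alpha(s)+t_2=t_1+\alpha(s_2+p)\geq t_1$ by compatibility of the order with addition.

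I expect this last point to be the only real obstacle. In a positively ordered monoid one could simply take $s=s_1$, since $0\leq t_2$ would give $t_1=\alpha(s_1)\leq\alpha(s_1)+t_2$; but in a $\Cus$-semigroup $t_2$ may fail to be positive, so (PD) is genuinely needed to absorb its possibly non-positive part. Everything else is a direct translation of the defined notions of $\im$, $\ker$ and exactness, and I would present (i) and the backward implication of (ii) as immediate consequences of these definitions.
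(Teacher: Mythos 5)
Your proof is correct, and it is the natural unwinding of the definitions of $\im$, $\ker$ and exactness given just before the statement; the paper itself gives no proof here but simply cites \cite[Proposition 4.8]{C21b}, where essentially this argument appears. Your identification of axiom (PD) as the one genuinely needed ingredient (to produce $s=s_1+p$ with $\alpha(s)+t_2\geq t_1$ when $t_2$ need not be positive) is exactly right.
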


\begin{thm}\label{thm:splitmax}\cite[Theorem 4.13 - Corollary 4.14]{C21b}
Let $S$ be a $\Cus$-semigroup with maximal elements. The following short sequence is split-exact in the category $\Cus$
\[
\xymatrix{
0\ar[r]^{} & S_+\ar[r]^{i} & S\ar[r]^{j} & S_{max}\ar@/_{-1pc}/[l]^{q}\ar[r]^{} & 0
} 
\]
where $i$ and $q$ are the canonical inclusions and $j$ is defined by $j(s):=s+e_{S_{\max}}$.

Further, consider $\alpha:S\longrightarrow T$ is a $\Cus$-morphism between $\Cus$-semigroups with maximal elements. Then the following diagram is commutative with exact rows in the category $\Cus$
\[
\xymatrix@!R@!C@R=30pt@C=30pt{
0\ar[r]^{} & S_+\ar[d]_{\alpha_+}\ar[r]^{i_S} & S \ar[d] _{\alpha}\ar[r]^{j_S} & S_{\max}\ar@/_{-1pc}/[l]^{q_S} \ar[d]^{\alpha_{\max}}\ar[r]^{} & 0
\\
0\ar[r]^{} & T_+\ar[r]^{i_T} & T\ar[r]^{j_T} & T_{\max}\ar@/_{-1pc}/[l]^{q_T}\ar[r]^{} & 0
} 
\]
\end{thm}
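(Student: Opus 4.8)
The plan is to verify the two claims of \autoref{thm:splitmax} separately: first that the short sequence $0\to S_+\overset{i}\to S\overset{j}\to S_{\max}\to 0$ is split-exact, and then that the diagram commutes with exact rows. Since the statement explicitly cites \cite[Theorem 4.13 - Corollary 4.14]{C21b}, the genuine content here is that all the relevant maps and the exactness conditions, originally established in the larger category $\Cu^\sim$, descend to $\Cus$. Because $\Cus$ is the full subcategory of $\Cu^\sim$ cut out by (PC), (PD) and (S0), and because $S$ is assumed to have maximal elements, I would first record that $S_+$ is a genuine $\Cu$-semigroup, that $S_{\max}$ is an absorbing abelian group by the earlier proposition, and that $i,q,j$ are all $\Cus$-morphisms; the map $j(s):=s+e_{S_{\max}}$ is well-defined precisely because $e_{S_{\max}}$ is the largest element of $S_+$ (part (iv) of that proposition) and addition is order- and supremum-compatible.

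First I would check split-exactness of the single sequence. Exactness at $S_+$ means $i$ is an order-embedding, which follows from $S_+$ being an order-hereditary submonoid. Exactness at $S_{\max}$ means $j$ is surjective with order-hereditary image onto $S_{\max}$ (invoking \autoref{prop:seqinjsur}(ii)); surjectivity of $j$ is the content of part (v) of the earlier proposition, since for any maximal $m$ one has $m+e_{S_{\max}}=m$. The splitting $\rho=q$ satisfies $j\circ q=\id_{S_{\max}}$ because $q$ is the inclusion and $m+e_{S_{\max}}=m$ for $m\in S_{\max}$. The crucial point is exactness at the middle term $S$, i.e.\ $\ker j=\im i$. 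Using the definitions of kernel and image from the preceding \autoref{prop:seqinjsur}, $\im i=\{(s_1,s_2)\mid \exists\, p\in S_+,\ s_1\leq p+s_2\}$ and $\ker j=\{(s_1,s_2)\mid s_1+e_{S_{\max}}\leq s_2+e_{S_{\max}}\}$; I would show these coincide by exploiting that $e_{S_{\max}}$ is the top of $S_+$ and that every element $s$ has a symmetric $p_s\in S_{\max}$ with $s+p_s=e_{S_{\max}}$ (again part (v)). The forward inclusion is a monotonicity computation; the reverse direction is where (PC) and (S0) do the real work, ensuring that the comparison after adding $e_{S_{\max}}$ can be transported back to a genuine positive-cone witness.

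For the second claim I would verify the commutativity of the two squares and the fact that the outer vertical arrows are the restricted functors $\alpha_+$ and $\alpha_{\max}$ of the preceding theorem. The left square $\alpha\circ i_S=i_T\circ\alpha_+$ is immediate since $\alpha_+$ is just the restriction of $\alpha$ to $S_+$, whose image lands in $T_+$ by positivity-preservation. The right square $\alpha_{\max}\circ j_S=j_T\circ\alpha$ requires unwinding the definition $\alpha_{\max}=\alpha+e_{T_{\max}}$: one computes $\alpha_{\max}(j_S(s))=\alpha(s+e_{S_{\max}})+e_{T_{\max}}$ and compares it to $j_T(\alpha(s))=\alpha(s)+e_{T_{\max}}$, the equality resting on $\alpha(e_{S_{\max}})\leq e_{T_{\max}}$ together with $e_{T_{\max}}$ absorbing from above. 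Exactness of both rows is the single-sequence result applied to $S$ and to $T$.

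The main obstacle I anticipate is the exactness at the middle term $S$, specifically the reverse inclusion $\ker j\subseteq\im i$. Transporting an inequality of the form $s_1+e_{S_{\max}}\leq s_2+e_{S_{\max}}$ into the existence of a positive witness $p$ with $s_1\leq p+s_2$ is not a formal manipulation: it is exactly the place where the weak positivity axioms (PC), (PD) and (S0) are indispensable, and it is the reason \cite{C21b} needed a dedicated argument rather than a citation to the $\Cu$-semigroup theory. I would treat this step with full care, likely by producing the candidate $p:=p_{s_2}$ (the symmetric of $s_2$ furnished by part (v)) and checking, via (PC), that $p\in S_+$ and that the desired inequality holds; every other step is a routine restriction or diagram-chase that I would relegate to the reader in the spirit of the surrounding proofs.
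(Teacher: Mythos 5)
The paper does not actually prove this statement; it is imported verbatim from \cite[Theorem 4.13 -- Corollary 4.14]{C21b}, so there is no in-text argument to compare against. Judged on its own terms, your reconstruction has the right skeleton (reduce everything to the three exactness conditions via the explicit $\ker$/$\im$ definitions, use that $e_{S_{\max}}$ is the largest positive element and that every $x$ has a symmetric $p_x$ with $x+p_x=e_{S_{\max}}$, then get the two rows and the two squares by restriction), and the treatment of the squares, the splitting $j\circ q=\id$, and exactness at the two ends is fine.

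The one concrete misstep is your candidate witness for the middle exactness. For $\ker j\subseteq\im i$ you propose $p:=p_{s_2}$, but that would force $s_1\leq p_{s_2}+s_2=e_{S_{\max}}$, and a general element of a $\Cus$-semigroup is \emph{not} dominated by $e_{S_{\max}}$ (e.g.\ $(1_\T,k)\not\leq e$ in $\Cu_1(C(\T))$ for $k\neq 0$, since comparison with $e$ forces the fiber coordinate to vanish). The correct witness is simply $p:=e_{S_{\max}}$ itself: from $s_1+e_{S_{\max}}\leq s_2+e_{S_{\max}}$ and $e_{S_{\max}}\geq 0$ one gets $s_1\leq s_1+e_{S_{\max}}\leq e_{S_{\max}}+s_2$, which is exactly membership in $\im i$. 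So the inclusion you flag as the hard one is in fact immediate; the only computation with any content is the \emph{forward} inclusion $\im i\subseteq\ker j$, where one needs $p+e_{S_{\max}}=e_{S_{\max}}$ for $p\in S_+$ (which follows since $p+e_{S_{\max}}$ is positive, hence $\leq e_{S_{\max}}$, and $\geq e_{S_{\max}}$ by monotonicity) --- the same absorption identity you correctly invoke for the right-hand square. With the witness replaced, your argument goes through; (PC), (PD), (S0) enter only through the structural facts about $S_+$ and $S_{\max}$ that you already cite, not through any delicate transport step.
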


Using the above diagram, it can be shown that $\alpha_+$ and $\alpha_{\max}$ are isomorphisms in their respective categories whenever $\alpha$ is a $\Cus$-isomorphism. Nevertheless, the converse is not true as shows the example constructed in \cite{C23}. More particularly, we are missing the entire ideal structure that we dive into next.

\vspace{0,2cm}$\hspace{-0,34cm}\bullet\,\,\textbf{$\Cus$-Ideals}$. Knowing about positive and maximal elements (when they exist) of a $\Cus$-semigroup $S$ allows us to exploit the ideal structure within $S$. Again most definitions and results are an adaptation of \cite{C21b} and we will provide extra-details or proofs whenever they are needed. 

\begin{dfn}
Let $S$ be a $\Cus$-semigroup. An \emph{ideal} of $S$ is a subset $I$ such that 

(i) $I$ is a positively directed submonoid of $S$.

(ii) $I$ is closed under suprema of increasing sequences and order-hereditary. (Equivalently, $I$ is a closed set for the Scott topology.)

(iii) For any $s,t\in S$ such that $s+t\in I$ then $s,t\in I$. (Positively Stable axiom)
\end{dfn}

We remark that the notion of \emph{positively stable} have been slightly modified. The notion exposed here is equivalent but somehow clearer. Similarly, in what follows the computation of the ideal generated by an element have been rewritten in an equivalent but neater way. Lastly, we mention that axiom (S0) ensures that for any $\Cus$-semigroup $S$, the set $\{0\}$ is always a $\Cus$-ideal of $S$ and in particular, $\{0\}$ is the ideal generated by $0_S$. (This is no longer true in the category $\Cu^\sim$ as shown in \cite[Section 5 - Proposition]{C23} where the author exhibits a $\Cu^\sim$-semigroup $S$ such that the ideal generated by $0_S$ is strictly larger than $\{0\}$.)

\begin{prop}\cite{C21b}
\label{prop:idealppty}
Let $S$ be a $\Cus$-semigroup and let $I$ be an ideal of $S$. Then

(i) $I$ is a $\Cus$-semigroup that canonically order-embeds into $S$. In particular, the sequence $0\longrightarrow I \longrightarrow S$ is exact.

(ii) For any $x\in  S_+$, the set $I_x:=\{y\in S\mid \text{there exists }z\in S \text{ with } y+z\leq \infty x\}$ is the smallest ideal of $S$ containing $x$ and we refer to it as the \emph{ideal generated by $x$}. 
Moreover, it is always true that $I_0={0_S}$. 

(ii') The smallest ideal containing an element might not exist. (See \cite[Proposition 3.12]{C21b}.)

(iii) $I$ has maximal elements if and only if $I$ is singly-generated (e.g. by its largest positive element $e_{I_{\max}}$.) 

(iv) There is a lattice isomorphism $\Lat(S)\simeq \Lat(S_+)$, where $\Lat(S)$ and $\Lat(S_+)$ denote the lattices of ideals of $S$ and $S_+$ respectively. Moreover, it matches $\Lat_f(S)\simeq\Lat_f(S_+)$, where $\Lat_f(S)$ and $\Lat_f(S_+)$ denote the sets of singly-generated ideals of $S$ and $S_+$ respectively.
\end{prop}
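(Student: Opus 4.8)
The plan is to treat the five items in turn, each time reducing to the corresponding statement for $\Cu$-semigroups from \cite{C21b} and inserting the single new ingredient, axiom (S0); throughout I abbreviate $\infty x:=\sup_n nx$, which is idempotent ($\infty x+\infty x=\infty x$) and satisfies $\infty x\geq 0$ when $x\in S_+$. For \emph{item (i)} I would first record that the sum, the order, and suprema of increasing sequences computed inside $I$ agree with those of $S$ (this is exactly ideal axioms (i)--(ii)), and deduce that the way-below relation of $I$ is the restriction of that of $S$: if $x_k\ll x$ in $S$ with $x,x_k\in I$, then any increasing sequence of $I$ with supremum above $x$ is such a sequence in $S$, so some term dominates $x_k$. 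Granting this, (O1), (O3), (O4), (S0) and (PC) are inherited verbatim, (O2) holds because the $\ll$-increasing approximants of $x\in I$ lie below $x$ and hence belong to $I$ by order-hereditarity, compactness of $0$ is inherited, and (PD) is precisely the requirement that $I$ be positively directed. Hence $I$ is a $\Cus$-semigroup, the inclusion is by construction an order-embedding, and exactness of $0\to I\to S$ follows from \autoref{prop:seqinjsur}(i).

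For \emph{item (ii)} the first move is the reformulation $y\in I_x\iff$ there is $z$ with $0\le y+z\le\infty x$: the nontrivial direction replaces a witness $z$ of $y+z\le\infty x$ by $z+\infty x$, which still witnesses membership by idempotency of $\infty x$ and now gives $y+(z+\infty x)\ge 0$ by (PC). From this, that $I_x$ is a submonoid (using $\infty x+\infty x=\infty x$), is order-hereditary, and is positively stable (since $(s+t)+z\le\infty x$ rewrites as $s+(t+z)\le\infty x$ and symmetrically) is immediate, while positive directedness follows because $z+\infty x\in I_x$ and $y+(z+\infty x)\ge 0$. Minimality is the observation that any ideal $J\ni x$ contains every $nx$, hence $\infty x$ by closure under suprema, hence $y+z\in J$ by order-hereditarity and so $y\in J$ by positive stability. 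Finally $I_0=\{0\}$ is where (S0) enters: $y+z\le\infty\cdot 0=0$ forces $y+z\ge 0$ by (PC), so $y+z=0$ and (S0) gives $y=z=0$ — exactly the phenomenon that fails in $\Cu^\sim$.

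I would also record at this point the lemma $(I_x)_+=\{w\in S_+\mid w\le\infty x\}$, the $\Cu$-ideal of $S_+$ generated by $x$: if $w\ge 0$ and $w+z\le\infty x$ then $z\le w+z\le\infty x$, so $z+\infty x\ge 0$ by (PC) and therefore $w\le w+z+\infty x\le\infty x$. This lemma makes items (iii) and (iv) clean. For \emph{(iii)} I would apply the structure proposition for $S_{\max}$ proved above to the $\Cus$-semigroup $I$: if $I=I_x$ then $(I_x)_+$ has largest element $\infty x$, so $I$ has maximal elements; conversely, if $I$ has maximal elements then $I_+$ has a largest element $e$, which satisfies $\infty e=e$, and (PD) in $I$ produces for each $y\in I$ some $p\in I$ with $0\le y+p\in I_+$, whence $y+p\le e=\infty e$ and $y\in I_e$, so $I=I_e$ is singly generated. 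For \emph{(iv)} I would exhibit the mutually inverse, inclusion-preserving maps $I\mapsto I\cap S_+$ and $J\mapsto\{y\in S\mid \exists z,\ y+z\in J\}$: that the second lands in ideals is proved as for $I_x$; $\Psi\circ\Phi=\id$ uses positive stability (for $\subseteq$) and (PD) (for $\supseteq$); and $\Phi\circ\Psi=\id$ reuses the lemma's argument ($w\ge 0$ and $w+z=j\in J$ give $z\le j$, then $z+j\ge 0$, then $w\le 2j\in J$, hence $w\in J$). Restricting to principal ideals and invoking the lemma yields $\Lat_f(S)\simeq\Lat_f(S_+)$.

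The main obstacle is hidden inside items (ii) and (iv): closure of $I_x$ (equivalently of $\Psi(J)$) under suprema of increasing sequences. Given $(y_n)$ increasing to $y$ with witnesses $0\le y_n+z_n\le\infty x$, one cannot simply pass to the limit, because the $y_n$ may be non-positive and a larger $y_n$ forces a \emph{smaller} admissible $z_n$, so the witnesses are not monotone and offer no supremum to feed into (O4). The way I would resolve this is to reduce via (O2) to a $\ll$-increasing sequence and show that a \emph{single} witness propagates up the chain — exploiting the compatibility of the order with the idempotent, absorbing element $\infty x$ together with (PC) — which is equivalent to transporting the question to the positive cone $S_+$, a genuine $\Cu$-semigroup in which ideals are automatically Scott-closed, and then pulling the conclusion back through the lemma and (PD). This propagation step is precisely where the extra care beyond the $\Cu$-case of \cite{C21b} is required.
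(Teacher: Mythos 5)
Your two substantive arguments are precisely the ones the paper itself supplies: the equivalence of $I_x=\{y\mid \exists z,\ y+z\leq\infty x\}$ with the original set $\{y\mid \exists z,\ 0\leq y+z\leq\infty x\}$ of \cite[Proposition 3.12]{C21b}, obtained by replacing $z$ with $z+\infty x$ and using idempotency of $\infty x$ together with (PC); and the lattice bijection in (iv) via $J\longmapsto\{s\mid\exists t,\ s+t\in J\}$, inverse to $I\longmapsto I\cap S_+$. Everything else the paper deliberately does \emph{not} reprove: the proposition is tagged \cite{C21b}, and items (i), (ii'), (iii) and the remaining ideal axioms are imported wholesale from there. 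So where you are concrete you agree with the paper, and where you aim for self-containedness you go beyond it --- which is where the problems appear.

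The main gap is the one you yourself flag: Scott-closedness of $I_x$. Your proposed resolution does not work as described. Normalizing witnesses to $y_n+z_n=\infty x$ (always possible: replace $z_n$ by $z_n+\infty x$ and use (PC) to get the lower bound), one finds $y_m+z_n\geq\infty x$ for $m\geq n$ and $y_m+z_n\leq\infty x$ for $m\leq n$; hence a single witness does \emph{not} propagate up the chain --- a fixed $z_n$ only bounds the terms below index $n$, and passing to a $\ll$-increasing subsequence via (O2) changes nothing. The witnesses form a decreasing family, so (O4) cannot be applied to them, and transporting to $S_+$ through your lemma does not help either, because the positivizer $p_y$ furnished by (PD) need not land in $(I_x)_+$. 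This step genuinely requires the argument of \cite[Proposition 3.12]{C21b}, and citing it (as the paper does) is the intended way to close it. A second, minor point concerns (i): you prove that $x'\ll x$ in $S$ implies $x'\ll x$ in $I$, which is what (O2) for $I$ needs, but you then assert that the two relations coincide; the reverse implication is the one required for the inclusion $I\hooklongrightarrow S$ to be a $\Cus$-morphism, and needs the standard interpolation $x'\leq y_m\ll y_{m+1}\leq y$ through an (O2)-approximation of $y$ taken inside $I$. The first gap is repaired by citation, the second by one line.
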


\begin{proof}
We give a brief argument to justify the \textquoteleft updated\textquoteright\ form of the set $I_x$. Originally, \cite[Proposition 3.12]{C21b} stipules that the smallest ideal generated by an element $x\in S_+$ is $\{y\in S\mid \text{there exists }z\in S \text{ with } 0\leq y+z\leq \infty x\}$. The latter set is obviously contained in $I_x$. However, for any $y\in I_x$ there exists $z\in S$ such that $y+z\leq \infty x$ and hence $y+z+\infty x\leq 2 \infty x=\infty x$. Further, by axiom (PC) we deduce that $y+z+\infty x\geq 0$. Writing $z':= z+\infty x$, it follows that $y\in \{y\in S\mid \text{there exists }z\in S \text{ with } 0\leq y+z\leq \infty x\} $ and the two sets agree.

We also give brief argument to (iv). For any $I\in \Lat(S_+)$, consider $I^*:=\{s\in S \mid \exists t\in S \text{ with } s+t\in I\}$. It can be checked that $I^*\in \Lat(S)$ and that $I\longmapsto I^*$ is a bijection whose inverse map sends an ideal of $S$ to its positive cone.
\end{proof}

We end up this part by showing that the ideal structure and the exact sequence linking the positive cone and maximal elements are all \textquoteleft compatible\textquoteright\ with $\Cus$-morphisms. \begin{thm}
\label{thm:diagrammorph}\cite[Corollaries 3.22 - 4.14 - 4.15]{C21b}
Let $S,T$ be $\Cus$-semigroups with maximal elements and let $\alpha:S\longrightarrow T$ be a $\Cus$-morphism. Let $I\in\Lat_f(S)$ and we write $I_\alpha$ to denote the ideal of $T$ generated by $\alpha(e_{I_{\max}})$. 

 Then the following diagram is commutative with exact rows in the category $\Cus$.\\
\[
\xymatrix@!R@!C@R=20pt@C=2pt{
&0\ar[rr] &&  (I_\alpha)_+\ar[rr]\ar@{.>}[dd] &&  I_\alpha\ar[rr]\ar@{.>}[dd] &&  (I_\alpha)_{\max}\ar@{.>}@/_{-1.2pc}/[ll]\ar[rr]\ar@{.>}[dd] && 0 \\
0\ar[rr] && I_+\ar[rr]\ar[dd]\ar[ur]_{(\alpha_{\mid })_+} && I\ar[rr]\ar[dd]\ar[ur]_{\alpha_{\mid }} && I_{\max}\ar@/_{-1.2pc}/[ll]\ar[rr]\ar[dd]\ar[ur]_{(\alpha_{\mid })_{\max}} && 0 \\
&0\ar@{.>}[rr] && T_+\ar@{.>}[rr] && T\ar@{.>}[rr] && T_{\max}\ar@{.>}@/_{-1.2pc}/[ll]\ar@{.>}[rr] && 0\\
0\ar[rr]{} && S_+\ar[rr]\ar@{.>}[ur]_{\alpha_+} && S\ar[rr]\ar@{.>}[ur]_{\alpha} && S_{\max}\ar@/_{-1.2pc}/[ll]\ar[rr]\ar@{.>}[ur]_{\alpha_{\max}} && 0
}
\vspace{1cm}\]
where $\alpha_{\mid }:I\longrightarrow I_\alpha$ is the restriction of $\alpha$, the horizontal arrows are the respective maps $i,j,q$ as in \autoref{thm:splitmax} and the vertical arrows are $\nu_+(I\overset{\subseteq}\longrightarrow S), I\overset{\subseteq}\longrightarrow S, \nu_{\max}(I\overset{\subseteq}\longrightarrow S)$ and similarly with $I_\alpha\overset{\subseteq}\longrightarrow T$. 
\end{thm}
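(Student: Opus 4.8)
The statement packages a single commutative cube of split-exact sequences, so my plan is to produce it by assembling its six faces, each an instance of a result already available, rather than by any direct construction. The four horizontal rows of the cube are the split-exact sequences attached to the four semigroups $S$, $T$, $I$ and $I_\alpha$ by \autoref{thm:splitmax}, so the first thing I would check is that all four are $\Cus$-semigroups \emph{with maximal elements}. For $S$ and $T$ this is a hypothesis. For $I$ it holds because $I\in\Lat_f(S)$ is singly generated, so \autoref{prop:idealppty}(iii) applies. For $I_\alpha$ I note that $e_{I_{\max}}\in I_+\subseteq S_+$ and that a $\Cus$-morphism preserves positivity, whence $\alpha(e_{I_{\max}})\in T_+$; thus $I_\alpha=I_{\alpha(e_{I_{\max}})}$ is a well-defined singly generated ideal by \autoref{prop:idealppty}(ii), and \autoref{prop:idealppty}(iii) again supplies maximal elements.

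The only piece carrying genuine content is the construction of the restriction $\alpha_{\mid}\colon I\to I_\alpha$ and the verification that it is a $\Cus$-morphism landing in the correct ideal. Here I would use the explicit description $I=I_{e_{I_{\max}}}=\{y\in S\mid \exists\, z,\ y+z\leq \infty\, e_{I_{\max}}\}$ from \autoref{prop:idealppty}(ii). Given $y\in I$, pick $z$ with $y+z\leq \infty\, e_{I_{\max}}$; applying $\alpha$ and using that a $\Cus$-morphism is order preserving and preserves suprema (so that $\alpha(\infty\, x)=\infty\,\alpha(x)$) yields $\alpha(y)+\alpha(z)\leq \infty\,\alpha(e_{I_{\max}})$, i.e. $\alpha(y)\in I_{\alpha(e_{I_{\max}})}=I_\alpha$. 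Hence $\alpha(I)\subseteq I_\alpha$ and $\alpha_{\mid}$ is well-defined; that it preserves order, the relation $\ll$ and suprema of increasing sequences is inherited from $\alpha$ together with the facts that $I$ and $I_\alpha$ are order-hereditary and closed under such suprema (\autoref{prop:idealppty}(i)), so that $\ll$ and suprema computed inside the ideals agree with those computed in $S$ and $T$.

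With these in hand the six faces assemble formally. The bottom, top, front and back faces are the commuting diagrams with exact rows produced by the naturality part of \autoref{thm:splitmax} applied respectively to the four $\Cus$-morphisms $\alpha\colon S\to T$, $\alpha_{\mid}\colon I\to I_\alpha$, the inclusion $\iota_S\colon I\hookrightarrow S$ and the inclusion $\iota_T\colon I_\alpha\hookrightarrow T$ (the inclusions being $\Cus$-morphisms by \autoref{prop:idealppty}(i)). The two remaining faces, those built from $\nu_+$ and from $\nu_{\max}$, commute by pure functoriality: since $\alpha_{\mid}$ is literally the corestriction of $\alpha$, one has the identity $\alpha\circ\iota_S=\iota_T\circ\alpha_{\mid}$ of $\Cus$-morphisms $I\to T$, and applying the functors $\nu_+$ and $\nu_{\max}$ gives $\nu_+(\iota_T)\circ\nu_+(\alpha_{\mid})=\nu_+(\alpha)\circ\nu_+(\iota_S)$ and likewise for $\nu_{\max}$, which are exactly the left and right faces.

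I expect the main subtlety to be concentrated in the $\nu_{\max}$ face. It follows formally from functoriality of $\nu_{\max}$, but the reason functoriality holds is not itself formal, since $\nu_{\max}$ sends a morphism $\beta\colon A\to B$ to $\beta+e_{B_{\max}}$ and thus makes all four distinct largest elements $e_{S_{\max}}$, $e_{T_{\max}}$, $e_{I_{\max}}$ and $e_{(I_\alpha)_{\max}}$ intervene. Chasing $x\in I_{\max}$ around this square, the top-then-right path gives $\alpha(x)+e_{(I_\alpha)_{\max}}+e_{T_{\max}}$ while the left-then-bottom path gives $\alpha(x)+\alpha(e_{S_{\max}})+e_{T_{\max}}$; both reduce to $\alpha(x)+e_{T_{\max}}$ once one observes that the projection $t\mapsto t+e_{T_{\max}}$ collapses the entire positive cone $T_+$ onto the unit $e_{T_{\max}}$. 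Indeed $e_{T_{\max}}$ is simultaneously the unit of the group $T_{\max}$ and the largest element of $T_+$, so for positive $t$ the compatibility of order and addition gives $e_{T_{\max}}\leq t+e_{T_{\max}}\leq e_{T_{\max}}$, whence $t+e_{T_{\max}}=e_{T_{\max}}$; as both $e_{(I_\alpha)_{\max}}$ and $\alpha(e_{S_{\max}})$ are positive, the two paths agree. The remaining faces and the global coherence of the cube are then routine diagram-chases, which I would record and leave to the reader after noting that the cube commutes and its rows are exact.
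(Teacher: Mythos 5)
Your proposal is correct, and it matches the intended argument: the paper itself gives no proof of this theorem, deferring entirely to \cite[Corollaries 3.22, 4.14, 4.15]{C21b}, and those results are precisely the naturality of the split-exact sequence of \autoref{thm:splitmax} together with the ideal facts of \autoref{prop:idealppty} that you assemble into the six faces of the cube. Your verification that $\alpha(I)\subseteq I_\alpha$ via $\alpha(\infty\,e_{I_{\max}})=\infty\,\alpha(e_{I_{\max}})$ and your direct check of the $\nu_{\max}$ face (using that $t+e_{T_{\max}}=e_{T_{\max}}$ for $t\geq 0$) are exactly the points where content is needed, and both are handled correctly.
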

\begin{rmk}
(i) Observe that $(\alpha_{\mid})_+(i_0)=\alpha(i_0)$ for all $i_0\in I_+$ and $(\alpha_{\mid})_{\max}(i_\infty)=\alpha(i_{\infty})+e_{(I_\alpha)_{\max}}=\alpha(i_{\infty})+\alpha(e_{I_{\max}})=\alpha(i_{\infty})$ for all $i_{\infty}\in I_{\max}$.

(ii) We have assumed that $S,T$ have maximal elements to ensure that $S_{\max},T_{\max}$ are not empty and $\alpha_{\max}$ exists. For the general case, an analogous result applies replacing $S,T$ by any singly-generated ideals $I',J$ that respectively contain $I,I_\alpha$ and such that $J\supseteq I_{\alpha}$. 
\end{rmk}

$\hspace{-0,34cm}\bullet\,\,\textbf{$\Cus$-Quotients}$. Let $S$ be a $\Cus$-semigroup and let $I\in \Lat(S)$. We can define a preorder on $S$ by \[x\leq_I y \text{ if there exists } z\in I \text{ such that } x\leq y+z.\] This induces an equivalence relation $\sim_I$ obtained by antisymmetrisation of $\leq_I$ and we define the \emph{$\Cus$-semigroup quotient of $S$ by $I$} as follows\[S/I:=(S/\!\sim_I,+,\leq)\] where \[[s]+[t]:=[s+t] \text{ and }[s]\leq [t] \text{ if } s\leq_I t\]

\begin{prop}\label{prop:idealquotient} \cite[Lemma 4.4]{C21b}
Let $S,T$ be $\Cus$-semigroups and let $\alpha:S\longrightarrow T$ be a $\Cus$-morphism.  Let $I\in\Lat(S)$. Then

(i) $S/I$ is a well-defined $\Cus$-semigroup.

(ii) The short sequence $0\longrightarrow I \longrightarrow S\longrightarrow S/I \longrightarrow 0$ is exact.
 
(iii) $\alpha$ factorizes through $S/I$ whenever $I\subseteq \alpha^{-1}(\{0_T\})$. In that case, the factorized map $\overline{\alpha}:S/I\longrightarrow T$ is surjective if and only if $\alpha$ is surjective. 
\end{prop}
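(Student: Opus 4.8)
The plan is to treat (i) as the substantive part and to derive (ii) and (iii) largely formally from it. For (i), I would first check that the relation $\leq_I$ defined by $x\leq_I y\iff\exists\,z\in I,\ x\leq y+z$ is a preorder compatible with the addition: reflexivity uses $0_S\in I$, while transitivity and additivity use that $I$ is a submonoid together with the order-compatibility of $+$ in $S$. Passing to the antisymmetrisation $\sim_I$ then yields a genuine partially ordered monoid $(S/I,+,\leq)$ with neutral element $[0_S]$, the operations being well defined precisely because $\leq_I$ is additive. By construction the quotient map $\pi\colon S\longrightarrow S/I$, $s\longmapsto[s]$, is a surjective, order- and sum-preserving map, and $[0_S]$ is compact.

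The core of (i) is to verify that $S/I$ inherits (O1)--(O4) and (PC), (PD), (S0). The key device is a lifting argument: given an increasing sequence $([s_n])_n$ in $S/I$, choose witnesses $z_n\in I$ with $s_n\leq s_{n+1}+z_n$ and set $t_n:=s_n+\sum_{k<n}z_k$. No positivity of the partial summands is needed to see that $(t_n)_n$ is increasing in $S$ and that $[t_n]=[s_n]$, so $t:=\sup_S t_n$ exists by (O1) in $S$. I would then claim $[t]=\sup_n[s_n]$: it is clearly an upper bound, and for the least-upper-bound property one must deduce, from $t_n\leq_I u$ for all $n$, that $t\leq_I u$. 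Axiom (O2) in $S/I$ is obtained the same way, by lifting a $\ll$-increasing sequence through $\pi$; this simultaneously yields a characterisation of $\ll$ in $S/I$ in terms of a way-below relation in $S$ between suitable lifts, which in turn gives (O3), shows that $\pi$ preserves $\ll$ and suprema, and reduces (O4) to (O4) in $S$. Finally (PD) follows from (PD) in $S$ with witness $z=0_S$, while (PC) and (S0) follow from their $S$-counterparts combined with positive directedness, order-heredity and positive stability of $I$ (in particular the class $[0_S]$ meets $S_+$ exactly in $I_+$, which is what (S0) requires).

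The hard part will be the step hidden in the least-upper-bound argument: passing from the countably many, a priori unrelated and possibly non-positive error terms $z_n\in I$ witnessing $t_n\leq_I u$ to a \emph{single} $z\in I$ witnessing $t\leq_I u$. I expect this combining of errors to be the main obstacle, since $\leq_I$ is defined by a single element of $I$. The plan to overcome it is to first reduce to positive witnesses using positive directedness of $I$, then telescope them into an increasing sequence in $I_+$, whose supremum exists in $I_+\subseteq I$ because $I$ is closed under suprema of increasing sequences; pushing the inequalities $t_n\leq u+z_n$ through this supremum via (O4) then produces the required single witness $z$. This is exactly the place where the ambient axioms and the closure properties of the ideal are indispensable; everything else is bookkeeping.

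For (ii), the sequence $0\longrightarrow I\longrightarrow S\longrightarrow S/I\longrightarrow 0$ is exact essentially by unwinding definitions. Exactness at $I$ is the statement that the inclusion $\iota\colon I\longrightarrow S$ is an order-embedding, which is \autoref{prop:idealppty}(i) together with \autoref{prop:seqinjsur}(i). Exactness at $S$ amounts to $\ker\pi=\im\iota$, and both sides unwind to the same relation: $\ker\pi=\{(s_1,s_2)\mid s_1\leq_I s_2\}$ by definition of the quotient order, while $\im\iota=\{(t_1,t_2)\mid\exists\,w\in I,\ t_1\leq w+t_2\}$ is literally the relation $\leq_I$. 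Exactness at $S/I$ is the surjectivity of $\pi$ via \autoref{prop:seqinjsur}(ii), $\pi$ being onto by construction. For (iii), when $I\subseteq\alpha^{-1}(\{0_T\})$ I would set $\overline{\alpha}([s]):=\alpha(s)$; this is well defined and order-preserving because $s\leq_I s'$ gives $s\leq s'+z$ with $\alpha(z)=0_T$, whence $\alpha(s)\leq\alpha(s')$, so $\alpha$ collapses $\sim_I$. That $\overline{\alpha}$ is a $\Cus$-morphism follows from the factorisation $\alpha=\overline{\alpha}\circ\pi$ together with the fact, established in (i), that $\pi$ preserves $\ll$ and suprema. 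Finally $\alpha=\overline{\alpha}\circ\pi$ with $\pi$ surjective gives at once that $\overline{\alpha}$ is surjective if and only if $\alpha$ is.
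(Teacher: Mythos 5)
Your treatment of (ii) coincides with the paper's: the paper proves only (ii), by the same unwinding of $\ker \pi=\im i$ together with an appeal to \autoref{prop:seqinjsur}, and outsources (i) and (iii) entirely to \cite[Lemma 4.4]{C21b}. So the substantive content you add is the from-scratch argument for (i), and there the key step fails as stated.

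The gap sits exactly where you flag the main obstacle. From $t_n\leq u+w_n$ with $w_n\in I$ you propose to ``reduce to positive witnesses using positive directedness of $I$'' and then telescope inside $I_+$. But positive directedness only supplies, for each $w_n\in I$, some $p_n\in I$ with $w_n+p_n\geq 0$; it does not supply a positive element of $I$ \emph{dominating} $w_n$, which is what the reduction requires (you need $u+w_n\leq u+w_n'$ with $w_n'\in I_+$, i.e.\ essentially $w_n\leq w_n'$). Such a dominating positive element need not exist: in a webbed semigroup $S_G$ the positive cone is $\{(s,e_{G(s)})\}$ and $(s,g)\leq(t,e)$ forces $g\in\kernel G(s\leq t)$, so already in $\Cu_1(C(\T))=\mathfrak{w}(\Lsc(\T,\overline{\N}),\K_1(I_{(-)}))$ an element $([1],k)$ with $k\neq 0$ lies in an ideal but is below no positive element at all, the connecting maps on the $\K_1$-fibers being injective. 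Hence your sequence of witnesses never lands in $I_+$, the supremum-of-witnesses device cannot be launched, and the least-upper-bound property of $[\sup_n t_n]$ --- on which (O1), the fact that $\pi$ preserves suprema and $\ll$, and thereby your proofs of (O2)--(O4) and of (iii) all lean --- is left unproved. The statement is true, but combining countably many non-positive error terms into a single element of $I$ requires a genuinely different mechanism; this is precisely the content of \cite[Lemma 4.4]{C21b}, which the present paper cites rather than reproves.
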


\begin{proof}
(i) and (iii) are done in \cite{C21b}. We are left to prove (ii). Exactness at $I$ and $S/I$ are automatic from \autoref{prop:seqinjsur}. Now observe $(s_1,s_2)\in \ker \pi$ if and only if $[s_1]\leq [s_2]$ in $S/I$ if and only if there exists $z \in I$ such that $s_1\leq i(z)+s_2$ if and only if $(s_1,s_2)\in \im i$ which ends the proof.
\end{proof}

\begin{cor} 
Let $S,T$ be $\Cus$-semigroups and let $\alpha:S\longrightarrow T$ be a $\Cus$-morphism.  Let $I\in\Lat(S)$ and let $J\in\Lat(T)$ such that $J\supseteq \alpha(I)$. Then $\alpha$ induces a $\Cus$-morphism $\overline{\alpha}:S/I\longrightarrow T/J$ such that the following diagram commutes
\[
\xymatrix{
 S\ar[d]_{}\ar[r]^{\alpha} & T \ar[d] _{}
\\
S/I\ar[r]^{\overline{\alpha}} & T/J
} 
\] 
\end{cor}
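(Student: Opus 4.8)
The plan is to realize $\overline{\alpha}$ as the factorization of a single composite morphism through a quotient map, so that commutativity of the square comes for free rather than being checked by hand. Denote by $\pi_S\colon S\longrightarrow S/I$ and $\pi_T\colon T\longrightarrow T/J$ the canonical quotient maps, which are $\Cus$-morphisms by \autoref{prop:idealquotient}. Since a composition of $\Cus$-morphisms is again a $\Cus$-morphism, the map $\pi_T\circ\alpha\colon S\longrightarrow T/J$ is a $\Cus$-morphism, and the idea is to apply \autoref{prop:idealquotient}(iii) to it, with codomain $T/J$ playing the role of $T$ in that statement. Concretely, I would verify the hypothesis $I\subseteq(\pi_T\circ\alpha)^{-1}(\{0_{T/J}\})$, which then produces a factorization $\overline{\alpha}\colon S/I\longrightarrow T/J$ satisfying $\overline{\alpha}\circ\pi_S=\pi_T\circ\alpha$; this identity is exactly the asserted commutativity.

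The key step is therefore the kernel inclusion. Fix $i\in I$. By hypothesis $\alpha(i)\in\alpha(I)\subseteq J$, so it suffices to show that any $j\in J$ satisfies $[j]=0_{T/J}$, i.e. $j\sim_J 0$. On the one hand, the trivial inequality $j\leq 0+j$ with $j\in J$ gives $j\leq_J 0$, whence $[j]\leq 0$. On the other hand, since $J$ is an ideal it is positively directed, so there exists $p\in J$ with $j+p\geq 0$; this says precisely $0\leq_J j$, whence $0\leq[j]$. Combining the two inequalities yields $[j]=0_{T/J}$, and applying this to $j=\alpha(i)$ shows that $\pi_T\circ\alpha$ vanishes on $I$, as required.

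Once the inclusion $I\subseteq(\pi_T\circ\alpha)^{-1}(\{0_{T/J}\})$ is in hand, \autoref{prop:idealquotient}(iii) delivers the desired $\Cus$-morphism $\overline{\alpha}\colon S/I\longrightarrow T/J$ together with the relation $\overline{\alpha}\circ\pi_S=\pi_T\circ\alpha$, completing the argument. The only mildly delicate point I anticipate is the second half of the kernel verification, namely that an element of $J$ is identified with $0$ in $T/J$ even though it need not be positive in $T$; this is exactly where the positive-directedness built into the definition of an ideal is needed, and it is why the natural hypothesis is $J\supseteq\alpha(I)$ on the whole ideal rather than on its positive cone alone.
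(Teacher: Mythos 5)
Your proposal is correct and follows the same route the paper intends: the paper's proof consists of the single line that the statement is a direct corollary of \autoref{prop:idealquotient}(iii), i.e. one factorizes $\pi_T\circ\alpha$ through $S/I$ after checking that $I$ lands in the class of $0_{T/J}$. Your explicit verification that every $j\in J$ satisfies $[j]=0_{T/J}$ (using $j\leq 0+j$ for one inequality and positive directedness of the ideal $J$ for the other) correctly fills in the detail the paper leaves implicit.
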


\begin{proof}
This is a direct corollary from (iii) of the above proposition.
\end{proof}
Let us end this part by stating how the all above is applied for specific classes of $\Cus$-semigroups that we refer to as webbed $\Cus$-semigroups and concrete $\Cus$-semigroups.
\begin{prg}\textbf{Webbed $\Cus$-semigroups - Concrete $\Cus$-semigroups}.  A \emph{webbed $\Cus$-semigroup}
is a $\Cus$-semigroup of the form $S_G:=\mathfrak{w}(S,G)$ for some $(S,G)\in \CuG$. Similarly a \emph{webbed $\Cus$-morphism} between webbed $\Cus$-semigroups $S_G$ and $T_H$ is a $\Cus$-morphism of the form $\eta_\alpha:=\mathfrak{w}(\eta,\alpha)$ for some $(\eta,\alpha)\in \Hom_{\CuG}((S,G),(T,H))$. 
Let us briefly remark how the ideal-quotient theory can be restated in this particular setting using $S$ and $G$. Let $S_G$ be a webbed $\Cus$-semigroup. We have $\nu_+(S_G)\simeq S$ and hence, $S_G$ has maximal element if and only if $S$ has a largest element $\infty_S$. In that case, we have that  $\nu_{\max}(S_G)\simeq G(\infty_s)$. Moreover, any element in $\Lat(S_G)$ is of the form $I_G$ where $I\in\Lat(S)$ and $G$ is the restriction of $G$ to $I$. Let $\alpha_\eta$ be a webbed $\Cus$-morphism and let $I,J$ be singly-generated $\Cus$-ideals of $S,T$ respectively, such that $J\supseteq \alpha(I)$. Then $(\alpha_{\mid IJ})_{max}:=H(\alpha(e_{I_{max}})\leq e_{J_{max}})\circ \eta_{e_{I_{max}}}$. 

A \emph{concrete $\Cus$-semigroup} is a $\Cus$-semigroup arising from a $\CatCa$-algebra via some functor $F:\CatCa\longrightarrow \Cus$. Similarly a \emph{concrete $\Cus$-morphism} is a $\Cus$-morphism arising from a $^*$-homomorphism via $F$. In the sequel, we provide a generic method to produce invariants for $\CatCa$-algebras via the webbing functor. Consequently, concrete $\Cus$-semigroups we consider are a fortiori webbed $\Cus$-semigroups and we shall transpose the ideal-quotient theory in greater details for these concrete $\Cus$-semigroups in the upcoming section.
\end{prg}

\section{Applications and Outlook}
This section is aiming to generically create invariants for $\CatCa$-algebras using the Cuntz semigroup together with other existing invariants such as (algebraic or total) $\K$-Theory via the webbing functor. We establish properties such as exactness or continuity of these invariants and using the ideal and quotients theory for $\Cus$-semigroups, we are able to unravel information they contain about a given $\CatCa$-algebra. In a second stage, we show that many of existing invariants for $\CatCa$-algebras can be rewritten as a well-suited $\Cu_\K$-type generic construction. As an application, we are constructing an invariant that incorporates the information of the Cuntz semigroup together with the information of the Hausdorffized algebraic $\K_1$-group and we expose the benefits that could follow for classification of $\CatCa$-algebras. For that matter, we will be led to consider distance between concrete and webbed $\Cus$-semigroups. 

\subsection{The \texorpdfstring{$\Cu_\K$}{CuK}-type invariants}\label{sec:A}  In this first part, we provide a generic method to build invariants for $\CatCa$-algebras. Other types generic constructions could be considered in the future. (E.g. using the double dual of the Cuntz semigroup as a \textquoteleft base space\textquoteright\ instead of the Cuntz semigroup itself, or use another category than $\AbGp$ for the \textquoteleft fibers\textquoteright. See comments in the next subsection.) 
The \emph{$\Cu_\K$-type construction} we expose here roughly consists of mixing the functor $\Cu:\CatCa\longrightarrow \Cu$ together with a continuous functor $\K:\CatCa\longrightarrow \AbGp$ via the webbing functor. Recall that for any $\CatCa$-algebra $A$ the lattice of ideals of $A$ is in bijection with the lattice of ideals of $\Cu(A)$. For notational purposes, we are indistinguishably using $I_x$ where $x\in \Cu(A)$ to also refer to an element of $\Lat(A)$, when we should invoke $I_{a_x}$ where $a_x\in (A\otimes\mathcal{K})_+$ such that $[a_x]=x$.

Let $\K:\CatCa\longrightarrow \AbGp$ be a functor that preserves inductive limits and such that $\K(0)=\{e\}$. For any $\CatCa$-algebra $A$, we construct a functor $\K_A$ defined by 
\[
\begin{array}{ll}
	\K_A: \Cu(A)\longrightarrow \AbGp \\
	\hspace{1,6cm} x\longmapsto \K(I_x)\\
	\hspace{0,95cm}x\leq y\longmapsto \K(I_x\overset{\subseteq}\longrightarrow I_y)
\end{array}
\] 
Thus we can consider the $\Cus$-semigroup $\Cu_\K(A):=\mathfrak{w}(\Cu(A),\K_A)$.  

Let $\phi:A\longrightarrow B$ be a $^*$-homomorphism. We construct a natural transformation $\K_\phi:\K_A\Rightarrow \K_B\circ\Cu(\phi)$ defined by 
\[
\begin{array}{ll}
	(\K_{\phi})_x:\K_A(x)\longrightarrow \K_B(\Cu(\phi)(x)) \\
	\hspace{1,7cm}[u]\longmapsto [\phi(u)]
\end{array}
\]
In other words, $(\K_{\phi})_x:=\K(\phi_{\mid}: I_x\longrightarrow I_{\Cu(\phi)(x)})$ where $\phi_{\mid}$ is obtained by restriction of $\phi$. It follows that $\K_\phi$ is a well-defined transformation and we can consider the $\Cus$-morphism $\Cu_\K(\phi):=\mathfrak{w}(\Cu(\phi),\K_\phi)$. 
We hence define a functor $\Cu_\K$ as follows.
\[
\begin{array}{ll}
	\Cu_\K: \CatCa\longrightarrow \Cus \\
	\hspace{1,15cm} A\longmapsto \Cu_\K(A)\\
	\hspace{1,2cm}\phi \longmapsto \Cu_\K(\phi)
\end{array}
\] 

In the sequel, we study functorial properties of any $\Cu_\K$-type invariant (e.g. continuity, exactness) and its ideal theory (e.g. commutative diagram with exact rows compatible with concrete $\Cus$-morphisms). From now on, we automatically assume that any functor $\K:\CatCa\longrightarrow \AbGp$ satisfies $\K(0)=\{e\}$. 
A first handy feature is to explicitly rewrite concrete $\Cus$-semigroups and concrete $\Cus$-morphisms arising from a $\Cu_\K$-type construction in terms of the original functors $\Cu$ and $\K$.

Consider a $\Cu_\K$-type construction and let $A$ be a $\CatCa$-algebra. Then
\[
\Cu_\K(A)=\bigsqcup\limits_{I\in \Lat_f(A)} \Cu_f(I)\times \K(I)
\]
where $\Cu_f(I):=\{x\in \Cu(A) \mid I_x=I\}$. 

Let $B$ be another $\CatCa$-algebra and let $\phi:A\longrightarrow B$ be a $^*$-homomorphism. Then
\[
\K_\phi=\{\K(I\longrightarrow I_\phi)\}_{I\in \Lat_f(A)}
\]
where $I_\phi:=\min\{J\in \Lat(B)\mid J\supseteq \phi(I)\}$ which is an element of $\Lat_f(B)$. 

Therefore we compute 
\[
\Cu_\K(\phi)(x,g)=(\Cu(\phi)(x),\K(I_x\longrightarrow I_{\Cu(\phi)(x)})(g)) \text{ for any } (x,g)\in \Cu_\K(A).
\]
\begin{prop}
Let $A$ be a $\CatCa$-algebra and consider a $\Cu_\K$-type invariant. Then 

(i) $\Cu_\K(A)$ satisfies axiom (05). 

(ii) $\Cu_\K(A)$ satisfies axiom (PWC) whenever $\Cu(A)$ has weak cancellation.

(iii) $\Cu_\K(A)$ is almost divisible whenever $\Cu(A)$ is almost divisible and $\K_A$ is stable.
\end{prop}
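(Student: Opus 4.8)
The plan is to recognize $\Cu_\K(A)$ as a \emph{webbed} $\Cus$-semigroup and then to invoke \autoref{prop:webprop} directly, so that all three assertions become immediate corollaries of work already done. By construction $\Cu_\K(A)=\mathfrak{w}(\Cu(A),\K_A)$, so the first task is to confirm that the pair $(\Cu(A),\K_A)$ is a legitimate object of $\CuG$. This amounts to three checks: that $\Cu(A)\in\Cu$ (standard); that $\K_A(0_{\Cu(A)})=\K(I_0)=\K(0)\simeq\{e\}$, which holds because $I_0=\{0\}$ by \autoref{prop:idealppty}(ii) and $\K(0)\simeq\{e\}$ is our standing assumption on $\K$; and that $\K_A$ preserves inductive limits. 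This last property is precisely what makes the $\Cu_\K$-type construction well-defined in the first place, and it follows from the inductive-limit preservation of $\K$ together with the fact that an increasing sequence $x_n$ with supremum $x$ in $\Cu(A)$ induces an inductive system of ideals $(I_{x_n})_n$ whose limit is $I_x$. Once membership in $\CuG$ is secured, each clause reduces to the matching clause of \autoref{prop:webprop}.

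For part (ii) I would simply cite \autoref{prop:webprop}(i): the hypothesis that $\Cu(A)$ has weak cancellation (WC) yields, through the webbing functor, axiom (PWC) for $\Cu_\K(A)$. For part (iii) the hypotheses are exactly those of \autoref{prop:webprop}(iv) --- namely $\Cu(A)$ almost divisible and $\K_A$ stable in the sense that $\K_A(s\leq k.s)$ is an isomorphism for all $k\in\overline{\N}$ --- so the conclusion that $\Cu_\K(A)$ is almost divisible follows at once. Part (i) is the only assertion requiring an ingredient from outside the present framework: \autoref{prop:webprop}(ii) shows that $\mathfrak{w}(S,G)$ inherits (O5) from $S$, so it suffices to recall the classical structural fact that $\Cu(A)$ satisfies (O5) for every $\CatCa$-algebra $A$. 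This is the theorem of R\o rdam and Winter (see \cite{RW10}) that the Cuntz semigroup of any $\CatCa$-algebra has almost algebraic order; feeding it into \autoref{prop:webprop}(ii) delivers (O5) for $\Cu_\K(A)$.

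The proof therefore presents no genuine obstacle, since the substantive order-theoretic work was carried out once and for all in \autoref{prop:webprop}. The only mild subtleties are the verification that $(\Cu(A),\K_A)\in\CuG$ --- in particular the inductive-limit preservation of $\K_A$ --- and the invocation of the fact that $\Cu(A)$ satisfies (O5); both are routine within the established setting. The conceptual payoff is exactly the one advertised in the introduction: properties of a $\Cu_\K$-type invariant are obtained \emph{systematically} from properties of $\Cu(A)$ and $\K$, with no need to re-run the detailed compact-containment computations of \autoref{prop:cc} for each new choice of $\K$.
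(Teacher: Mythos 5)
Your proposal is correct and takes essentially the same route as the paper, whose entire proof is a single citation of \autoref{prop:webprop}; you merely make explicit the implicit inputs (that $(\Cu(A),\K_A)\in\CuG$ and that $\Cu(A)$ satisfies (O5) by the R\o rdam--Winter almost-algebraic-order theorem), which is a harmless and accurate elaboration.
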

\begin{proof}
This directly follows from \autoref{prop:webprop}.
\end{proof}

Another significant benefit of the work done in the previous parts is that we are able to show that any $\Cu_\K$-type invariant is continuous given that functor $\K:\CatCa\longrightarrow \AbGp$ is continuous.

\begin{thm}
Let $\K:\CatCa\longrightarrow \AbGp$ be a continuous functor with respect to inductive limits and consider $\Cu_\K:\CatCa\longrightarrow \Cus$ as constructed above. 

Then $\Cu_\K$ is a continuous functor with respect to inductive limits. 
\end{thm}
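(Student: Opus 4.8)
The plan is to factor $\Cu_\K$ through the webbing functor and then invoke its continuity (\autoref{thm:webctn}). Define an assignment $\Phi\colon\CatCa\longrightarrow\CuG$ by $\Phi(A):=(\Cu(A),\K_A)$ on objects and $\Phi(\phi):=(\Cu(\phi),\K_\phi)$ on morphisms; this is exactly the data underlying the construction of $\Cu_\K$, and it is a functor precisely because $\Cu$ and $\K$ are. By construction $\Cu_\K=\mathfrak{w}\circ\Phi$. Since $\mathfrak{w}$ is continuous with respect to inductive limits, it suffices to prove that $\Phi$ is continuous, that is, that $\Phi$ carries an inductive limit $A=\varinjlim A_i$ (with connecting maps $\phi_{ij}$ and $\phi_{i\infty}$) to the inductive limit in $\CuG$ of the induced system $((\Cu(A_i),\K_{A_i}),(\Cu(\phi_{ij}),\K_{\phi_{ij}}))_i$.

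By \autoref{thm:limG} this latter limit is the explicitly constructed object $(S_\infty,G_\infty)$, where $S_\infty$ is the $\Cu$-inductive limit of the $\Cu(A_i)$. As $\Cu$ is itself continuous, $S_\infty\simeq\Cu(A)$, so $(S_\infty,G_\infty)$ and $\Phi(A)=(\Cu(A),\K_A)$ share the same first coordinate. One first checks that $(\Cu(A),\K_A)$ together with the morphisms $(\Cu(\phi_{i\infty}),\K_{\phi_{i\infty}})$ is a cocone for the system (immediate from functoriality of $\Phi$), so that the universal property of \autoref{thm:limG} produces a comparison $\CuG$-morphism whose first coordinate is the canonical $\Cu$-isomorphism. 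The entire remaining content is therefore to show that its second coordinate is a natural isomorphism $G_\infty\Rightarrow\K_A$, intertwining the limit transformations $\eta_{i\infty}$ with the transformations $\K_{\phi_{i\infty}}$; equivalently, that the limit functor $G_\infty$ of \autoref{thm:limG} coincides with $\K_A$.

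The heart of the matter is this identification of $G_\infty$ with $\K_A$, which rests on the fact that ideal generation commutes with inductive limits of $\CatCa$-algebras. For $x=\Cu(\phi_{i\infty})(x_i)$ in the dense subset $\bigcup_i\Cu(\phi_{i\infty})(\Cu(A_i))$, with $x_i=[a_{x_i}]$, the ideal $I_x\trianglelefteq A$ equals $\overline{\bigcup_{j\geq i}\phi_{j\infty}(I_{\phi_{ij}(a_{x_i})})}$, which is exactly the $\CatCa$-inductive limit of the system of ideals $(I_{\Cu(\phi_{ij})(x_i)})_{j\geq i}$ with its restriction maps. Continuity of $\K$ then gives $\K_A(x)=\K(I_x)\simeq\varinjlim_j\K(I_{\Cu(\phi_{ij})(x_i)})=G_\infty(x)$, which is precisely how $G_\infty$ is defined on the dense subset. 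For a general $x\in\Cu(A)$ one writes $x=\sup_n x_n$ for a $\ll$-increasing sequence in the dense subset; since the $I_{x_n}$ form an increasing chain of ideals with $I_x=\overline{\bigcup_n I_{x_n}}=\varinjlim_n I_{x_n}$, continuity of $\K$ again yields $\K_A(x)\simeq\varinjlim_n\K(I_{x_n})=G_\infty(x)$, matching the completion step of the limit-functor construction. Naturality in $x$ and compatibility with the cocone morphisms follow from the uniqueness clauses in the universal properties used to build $G_\infty$ and the $\eta_{i\infty}$, making the comparison morphism a $\CuG$-isomorphism.

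The main obstacle I anticipate is the twofold invocation of the continuity of $\K$ together with the two ideal-theoretic identifications: that $I_x$ is the $\CatCa$-inductive limit of the ideals generated at finite stages, and that $I_{\sup_n x_n}$ is the inductive limit of the chain $(I_{x_n})_n$. The second is genuinely needed because $\Cu$-limits are not algebraic, so $\bigcup_i\Cu(\phi_{i\infty})(\Cu(A_i))$ is merely dense in $\Cu(A)$ and $G_\infty$ has to be completed; controlling this density, and upgrading the pointwise isomorphisms to a \emph{natural} one compatible with the cocones, is where the delicacy lies. Everything else reduces to the already-established continuity of $\mathfrak{w}$ (\autoref{thm:webctn}) and of $\Cu$.
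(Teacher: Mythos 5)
Your proposal is correct and follows essentially the same route as the paper: reduce via the continuity of the webbing functor $\mathfrak{w}$ (\autoref{thm:webctn}), identify the first coordinate with $\Cu(A)$ by continuity of $\Cu$, and identify the limit functor $G_\infty$ with $\K_A$ by applying the continuity of $\K$ to the inductive system of ideals $I_x$. If anything, you are slightly more explicit than the paper about the completion step for elements outside the dense algebraic part $\bigcup_i\Cu(\phi_{i\infty})(\Cu(A_i))$, which the paper leaves implicit.
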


\begin{proof}
Let $(A_i,\phi_{ij})_{i\in I}$ be an inductive system in the category $\CatCa$ with limit $(A,\phi_{i\infty})_i$. We know that the webbing functor $\mathfrak{w}:\CuG\longrightarrow \Cus$ is continuous and hence, it suffices to show that the Cuntz group systems map $(\beta,\mu)$ (obtained by the universal property of the following induced system in $\CuG$) is a $\CuG$-isomorphism.
\[
\xymatrix{
(\Cu(A_i),\K_{A_i})\ar[dd]_{(\Cu(\phi_{ij}),\K_{\phi_{ij}})}\ar[rd]^{(\alpha_{i\infty},\nu_{i\infty})}\ar@/^2pc/[rrd]^{(\Cu(\phi_{i\infty}),\K_{\phi_{i\infty}})} &  &\\
& (S_\infty,\K_\infty)\ar@{.>}[r]_{\hspace{-0,4cm}\exists!\, (\beta,\mu)}&  (\Cu(A),\K_A)\\
(\Cu(A_j),\K_{A_j})\ar[ru]_{(\alpha_{j\infty},\nu_{j\infty})}\ar@/_2pc/[rru]_{(\Cu(\phi_{j\infty}),\K_{\phi_{j\infty}})}  & &
} 
\]
From the explicit construction of \autoref{thm:limG}, we observe that $(S_\infty,\alpha_{i\infty})$ is the $\Cu$-limit of the system $(\Cu(A_i),\Cu(\phi_{ij}))_i$. Therefore $\beta: S_\infty\overset{\id}\simeq \Cu(A)$ is the canonical isomorphism. We next have to show that the natural transformation $\mu:\K_\infty\Rightarrow \K_A\circ \beta$ is in fact a natural isomorphism. Let $x\in \Cu(A_i)$ and write $x_\infty:=\Cu(\phi_{i\infty})(x)$. We have that $\beta(x_\infty)=x_\infty$ and the following diagram commutes.
\[
\xymatrix{
\K_{A_i}(x)\ar[rr]^{(\nu_{i\infty})_{x_i}}\ar@/^{-2pc}/[rrrr]_{(\K_{\phi_{i\infty}})_{x_i}}&& \K_\infty(x_\infty)\ar@{-->}[rr]^{\mu_{x_\infty}} && \K_A(x_\infty)
} 
\]
Rewriting everything in simpler terms, using the definition of $\K_\phi$ and the explicit construction in \autoref{thm:limG}, it all comes down to 
\[
\left\{
\begin{array}{ll}
\K_\infty(x_\infty):=\lim\limits_{\longrightarrow}( \K(I_x),\K(\phi_{ij})  )\\
\K_A(x_\infty):=\K(\lim\limits_{\longrightarrow}( I_x,\phi_{ij}  ))
\end{array}
\right.
\]
where $\phi_{ij}$ denotes the restriction map to the ideal $I_x$ of $A_i$. We conclude using the continuity of the functor $\K:\CatCa\longrightarrow \AbGp$.
\end{proof}

We end up this part by highlighting the ideal-quotient theory of $\Cu_\K$-type invariants. As stated in the last observation of the previous section, the ideal-quotient theory of $\Cu_\K(A)$ will be closely related to the one of $\Cu(A)$ and to $\K(A)$. We start by studying the lattice of ideals, their positive cone and when they exist, their maximal elements. The following results are directly deduced from \autoref{prop:idealppty}, \cite[Theorem 3.21]{C21b} and \autoref{thm:diagrammorph}.

\begin{thm}
Let $A$ be a $\CatCa$-algebra and consider a $\Cu_\K$-type construction. Then there exist complete lattice isomorphisms given by 
\[
\begin{array}{ll}
\Lat(A)\simeq \Lat(\Cu(A))\simeq \Lat(\Cu_\K(A))\\
\hspace{0,8cm}I\longmapsto \hspace{0,3cm} \Cu(I) \hspace{0,3cm}\longmapsto \Cu_\K(I)
\end{array}
\]
Moreover, these bijective correspondences match the (sub)sets of singly generated ideals of all three lattices, i.e. $\Lat_f(A)\simeq \Lat_f(\Cu(A))\simeq \Lat_f(\Cu_\K(A))$.
\end{thm}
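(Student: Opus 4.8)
The plan is to factor the asserted chain as a composite of two order-isomorphisms and treat them separately, since the first link is already available. The isomorphism $\Lat(A)\simeq\Lat(\Cu(A))$, $I\longmapsto\Cu(I)$, is the known correspondence between ideals of a $\CatCa$-algebra and ideals of its Cuntz semigroup (cf. \cite[Theorem 3.21]{C21b}), which is moreover a complete lattice isomorphism restricting to $\Lat_f(A)\simeq\Lat_f(\Cu(A))$. Thus everything reduces to producing a complete lattice isomorphism $\Lat(\Cu(A))\simeq\Lat(\Cu_\K(A))$ realised by $\Cu(I)\longmapsto\Cu_\K(I)$ and matching the singly-generated sublattices.

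For this second link I would exploit that $\Cu_\K(A)=\mathfrak{w}(\Cu(A),\K_A)$ is a webbed $\Cus$-semigroup, so that its positive cone is $\nu_+(\Cu_\K(A))\simeq\Cu(A)$; concretely $(\Cu_\K(A))_+=\{(x,e)\mid x\in\Cu(A)\}$. Applying \autoref{prop:idealppty}(iv) to the $\Cus$-semigroup $S:=\Cu_\K(A)$ yields a lattice isomorphism $\Lat(\Cu_\K(A))\simeq\Lat((\Cu_\K(A))_+)=\Lat(\Cu(A))$, whose two mutually inverse maps are $J\longmapsto J^*:=\{s\in S\mid\exists\,t\in S,\ s+t\in J\}$ and the restriction of an ideal to the positive cone. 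It then remains to identify, for $J=\Cu(I)\in\Lat(\Cu(A))$ viewed inside the positive cone, the ideal $J^*$ of $\Cu_\K(A)$ with the webbed ideal $\Cu_\K(I)=\{(x,g)\in\Cu_\K(A)\mid x\in\Cu(I)\}$.

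This identification is the technical heart of the argument. For the inclusion $\Cu_\K(I)\subseteq J^*$, given $(x,g)$ with $x\in\Cu(I)$ I would use the symmetric-element trick from the proof of (PD) in \autoref{prop:idealppty}'s ambient results: $(x,g)+(x,-g)=(2x,e)$ lies in the positive cone and $2x\in\Cu(I)$, so $(x,g)\in J^*$. For the reverse inclusion, if $(x,g)\in J^*$ then $(x,g)+(y,h)=(x+y,e)\in J$ for some $(y,h)$, whence $x+y\in\Cu(I)$ and positive stability of the ideal $\Cu(I)$ forces $x\in\Cu(I)$, i.e. $(x,g)\in\Cu_\K(I)$. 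This is where I expect the only genuine care to be needed — keeping the $\K$-component bookkeeping consistent — but the computation collapses to positive stability plus the existence of symmetric partners, both already secured by the webbing construction.

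Finally I would upgrade the conclusion to the stated form. Completeness is automatic: an order-isomorphism between complete lattices preserves arbitrary suprema and infima, so the composite of the two order-isomorphisms is a complete lattice isomorphism. The matching of singly-generated ideals follows by composing the two restrictions already noted, namely $\Lat_f(A)\simeq\Lat_f(\Cu(A))$ from the cited $\CatCa$-algebra result and $\Lat_f(\Cu_\K(A))\simeq\Lat_f(\Cu(A))$ from the last assertion of \autoref{prop:idealppty}(iv), which records that $J\longmapsto J^*$ carries singly-generated ideals to singly-generated ideals and back. Tracking generators through the identification of the previous paragraph shows that a single generator $x$ of $I$ in $\Cu(A)$ corresponds to the generator $e_{(\Cu_\K(I))_{\max}}$ of $\Cu_\K(I)$, completing the chain $\Lat_f(A)\simeq\Lat_f(\Cu(A))\simeq\Lat_f(\Cu_\K(A))$.
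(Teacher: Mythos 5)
Your proposal is correct and follows essentially the same route as the paper, which derives the theorem directly from the lattice isomorphism $\Lat(A)\simeq\Lat(\Cu(A))$ of \cite[Theorem 3.21]{C21b} together with \autoref{prop:idealppty}(iv) applied to the webbed semigroup $\Cu_\K(A)=\mathfrak{w}(\Cu(A),\K_A)$, whose positive cone is $\Cu(A)$. Your explicit verification that $J^*=\Cu_\K(I)$ via the symmetric-partner and order-hereditarity arguments is exactly the bookkeeping the paper leaves implicit in its remark that every ideal of a webbed $\Cus$-semigroup $S_G$ has the form $I_G$ for $I\in\Lat(S)$.
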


\begin{cor}
Consider a $\Cu_\K$-type construction. Let $A$ be a $\CatCa$-algebra and let $I\in \Lat(A)$. The following are equivalent

(i) $I\in \Lat_f(A)$.

(ii) $\Cu(I)$ has a largest element $\infty_I$.

(iii) $\Cu_\K(I)$ has maximal elements.\\
In that case, we compute that $\nu_{\max}(\Cu_\K(I))\simeq \K(I)$.
\end{cor}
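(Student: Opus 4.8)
The plan is to organize the three conditions as a chain (i)$\Leftrightarrow$(ii)$\Leftrightarrow$(iii) built entirely from results already established, and then to read off the group isomorphism directly from the webbing description of $\Cu_\K(I)$. The whole argument is assembly; the only care needed is at the two points where a statement about maximal elements must be converted into a statement about a largest element.

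For (i)$\Leftrightarrow$(ii), I would first invoke the lattice isomorphism theorem above, which identifies $I\in\Lat_f(A)$ with the statement that $\Cu(I)$ is a singly-generated ideal of $\Cu(A)$. Since every $\Cu$-semigroup is in particular a $\Cus$-semigroup (all elements are positive, so (PC), (PD) and (S0) hold and the neutral element is compact), \autoref{prop:idealppty}(iii) applies to the ideal $\Cu(I)\in\Lat(\Cu(A))$ and characterizes its being singly-generated by the existence of maximal elements. Finally, as $\Cu(I)$ is itself a $\Cu$-semigroup its positive cone is all of $\Cu(I)$, so the proposition describing $S_{\max}$ reduces to the statement that $\Cu(I)$ has maximal elements if and only if it has a largest element $\infty_I$. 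Chaining these three equivalences yields (i)$\Leftrightarrow$(ii).

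For (ii)$\Leftrightarrow$(iii) and the formula, I would recall that $\Cu_\K(I)$ is by construction the webbed $\Cus$-semigroup $\mathfrak{w}(\Cu(I),\K_I)$ with $(\Cu(I),\K_I)\in\CuG$. The description of webbed $\Cus$-semigroups recorded earlier states that $\mathfrak{w}(S,G)$ has maximal elements precisely when $S$ has a largest element, and that in this case $\nu_{\max}(\mathfrak{w}(S,G))\simeq G(\infty_S)$. Taking $S=\Cu(I)$ and $G=\K_I$ gives (ii)$\Leftrightarrow$(iii) at once, and when the conditions hold it gives $\nu_{\max}(\Cu_\K(I))\simeq\K_I(\infty_I)$. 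It then remains to compute the fibre $\K_I(\infty_I)$: by definition $\K_I$ sends $x$ to $\K(I_x)$, the value of $\K$ on the ideal of $I$ generated by $x$ under $\Lat(I)\simeq\Lat(\Cu(I))$; since $\infty_I$ is the largest element of $\Cu(I)$ this ideal is all of $\Cu(I)$, corresponding to $I$ itself, whence $\K_I(\infty_I)=\K(I)$ and $\nu_{\max}(\Cu_\K(I))\simeq\K(I)$.

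The main (though modest) obstacle I anticipate is making precise the two transitions from \emph{maximal} to \emph{largest}: invoking the $\Cu$-semigroup specialization of the $S_{\max}$-proposition for $\Cu(I)$, and verifying via the lattice isomorphism $\Lat(I)\simeq\Lat(\Cu(I))$ that the ideal of $I$ generated by $\infty_I$ is genuinely all of $I$, so that the identification $\K_I(\infty_I)=\K(I)$ is exact rather than merely up to a smaller ideal. Everything else is a direct application of the cited results.
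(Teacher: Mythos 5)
Your proposal is correct and matches the paper's intent: the corollary is stated without an explicit proof, being "directly deduced" from the lattice isomorphism theorem, \autoref{prop:idealppty}, and the paragraph on webbed $\Cus$-semigroups (which records that $S_G$ has maximal elements iff $S$ has a largest element, with $\nu_{\max}(S_G)\simeq G(\infty_S)$), and your assembly chains exactly these ingredients. The two "maximal versus largest" transitions you flag are handled correctly, since for the $\Cu$-semigroup $\Cu(I)$ one has $S_+=S$ and the ideal of $\Cu(I)$ generated by its largest element is all of $\Cu(I)$, corresponding to $I$ under $\Lat(I)\simeq\Lat(\Cu(I))$.
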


\begin{thm}
Consider a $\Cu_\K$-type construction. Let $A,B$ be $\sigma$-unital $\CatCa$-algebras and let $\phi:A\longrightarrow B$ be a $^*$-homomorphism. Let $I\in\Lat_f(A)$.

 Then the following diagram is commutative with exact rows in the category $\Cus$.\\
\[
\xymatrix@!R@!C@R=20pt@C=0pt{
&0\ar[rr] &&  \Cu(I_\phi)\ar[rr]\ar@{.>}[dd] &&  \Cu_\K(I_\phi)\ar[rr]\ar@{.>}[dd] &&  \K(I_\phi)\ar@{.>}@/_{-1.2pc}/[ll]\ar[rr]\ar@{.>}[dd] && 0 \\
0\ar[rr] && \Cu(I)\ar[rr]\ar[dd]\ar[ur]_{\hspace{0,1cm}\Cu(\phi_\mid)} && \Cu_\K(I)\ar[rr]\ar[dd]\ar[ur]_{\Cu_\K(\phi_\mid)} && \K(I)\ar@/_{-1.2pc}/[ll]\ar[rr]\ar[dd]\ar[ur]_{\K(\phi_\mid)} && 0 \\
&0\ar@{.>}[rr] && \Cu(B)\ar@{.>}[rr] && \Cu_\K(B)\ar@{.>}[rr] && \K(B)\ar@{.>}@/_{-1.2pc}/[ll]\ar@{.>}[rr] && 0\\
0\ar[rr]{} && \Cu(A)\ar[rr]\ar@{.>}[ur]_{\Cu(\phi)} && \Cu_\K(A)\ar[rr]\ar@{.>}[ur]_{\Cu_\K(\phi)} && \K(A) \ar@/_{-1.2pc}/[ll]\ar[rr]\ar@{.>}[ur]_{\K(\phi)} && 0
}
\vspace{1cm}\] 
where $\phi_\mid:I\longrightarrow I_\phi$ is the restriction of $\phi$, the horizontal arrows are the respective maps $i,j,q$ as in \autoref{thm:splitmax} and the vertical arrows are $\Cu(I\overset{\subseteq}\longrightarrow A), \Cu_\K(I\overset{\subseteq}\longrightarrow A), G(I\overset{\subseteq}\longrightarrow A)$ and similarly with $I_\phi\overset{\subseteq}\longrightarrow B$.
\end{thm}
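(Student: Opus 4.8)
The plan is to deduce this concrete diagram from the abstract ideal–morphism compatibility of \autoref{thm:diagrammorph}, applied to $S:=\Cu_\K(A)$, $T:=\Cu_\K(B)$ and the $\Cus$-morphism $\alpha:=\Cu_\K(\phi)$. Before invoking that theorem one must verify its standing hypothesis that $S$ and $T$ have maximal elements, and this is exactly where $\sigma$-unitality enters: a $\sigma$-unital $\CatCa$-algebra has a countably-based Cuntz semigroup, so $\Cu(A)$ and $\Cu(B)$ possess largest elements; equivalently $A\in\Lat_f(A)$ and $B\in\Lat_f(B)$, whence by the Corollary immediately preceding this statement the semigroups $\Cu_\K(A)$ and $\Cu_\K(B)$ have maximal elements. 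This also legitimises the bottom two split-exact rows, which are nothing but the sequences of \autoref{thm:splitmax} for $S$ and $T$ read through the identifications $\nu_{+}(\Cu_\K(A))\simeq\Cu(A)$ and $\nu_{\max}(\Cu_\K(A))\simeq\K(A)$ (from the description of webbed $\Cus$-semigroups together with that same Corollary, taken with the ideal $A$ itself), and symmetrically for $B$.

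Next I would match the abstract data with the concrete data appearing in the statement. Via the lattice isomorphism $\Lat(A)\simeq\Lat(\Cu_\K(A))$ the hypothesis $I\in\Lat_f(A)$ says precisely that $\Cu_\K(I)$ is a singly-generated $\Cus$-ideal of $\Cu_\K(A)$, with neutral maximal element $e_{(\Cu_\K(I))_{\max}}=(\infty_I,e)$, where $\infty_I$ is the largest element of $\Cu(I)$. The two top rows are then the sequences of \autoref{thm:splitmax} for the ideals $\Cu_\K(I)$ and $I_\alpha$, and the identifications $(\Cu_\K(I))_{+}\simeq\Cu(I)$, $(\Cu_\K(I))_{\max}\simeq\K(I)$ turn them into the displayed sequences with end terms $\Cu(I)$ and $\K(I)$. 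The vertical maps together with the restriction maps $\Cu(\phi_\mid)$, $\Cu_\K(\phi_\mid)$, $\K(\phi_\mid)$ are read off from the abstract maps $\nu_{+}$, $-$, $\nu_{\max}$ applied to the inclusions and from the explicit form $\Cu_\K(\phi)=\mathfrak{w}(\Cu(\phi),\K_\phi)$; here I would use $\Cu_\K(\phi)(x,g)=(\Cu(\phi)(x),\K(I_x\to I_{\Cu(\phi)(x)})(g))$ and the webbed formula $(\alpha_{\mid IJ})_{\max}=H(\alpha(e_{I_{\max}})\leq e_{J_{\max}})\circ\eta_{e_{I_{\max}}}$ to confirm that $(\Cu_\K(\phi)_\mid)_{\max}$ is indeed $\K(\phi_\mid)$ and $(\Cu_\K(\phi)_\mid)_{+}$ is $\Cu(\phi_\mid)$, in accordance with the remark following \autoref{thm:diagrammorph}.

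The one genuinely non-formal point, and the step I expect to be the main obstacle, is the identification of the abstract ideal $I_\alpha$ — the $\Cus$-ideal of $\Cu_\K(B)$ generated by $\alpha(e_{(\Cu_\K(I))_{\max}})$ — with the concrete ideal $\Cu_\K(I_\phi)$, where $I_\phi=\min\{J\in\Lat(B)\mid J\supseteq\phi(I)\}$. Unwinding the definitions gives $\alpha(e_{(\Cu_\K(I))_{\max}})=\Cu_\K(\phi)(\infty_I,e)=(\Cu(\phi)(\infty_I),e')$, so under $\Lat(\Cu_\K(B))\simeq\Lat(B)$ the ideal $I_\alpha$ corresponds to the ideal of $B$ whose Cuntz semigroup is the ideal of $\Cu(B)$ generated by $\Cu(\phi)(\infty_I)$. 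The crux thus reduces to the purely $\Cu$-level fact that this ideal is the image under $\Lat(B)\simeq\Lat(\Cu(B))$ of $I_\phi$; this is where one must invoke that $\infty_I$ generates the ideal $\Cu(I)$, the functoriality and order-preservation of $\Cu$, and the compatibility of the bijection $\Lat(B)\simeq\Lat(\Cu(B))$ with forming the ideal generated by the image of a generator. Once this identification is in place, $I_\phi\in\Lat_f(B)$ follows since $\Cu(\phi)(\infty_I)$ is a single generator, and the entire diagram is obtained by transporting the abstract cube of \autoref{thm:diagrammorph} across the isomorphisms described above.
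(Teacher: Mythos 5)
Your proposal is correct and follows essentially the same route as the paper, which presents this theorem (together with the preceding lattice results) as "directly deduced from" \autoref{prop:idealppty} and \autoref{thm:diagrammorph}; you are simply spelling out that deduction, including the role of $\sigma$-unitality in guaranteeing maximal elements and the identification of the abstract ideal $I_\alpha$ with $\Cu_\K(I_\phi)$, which is indeed the only point requiring unwinding of definitions.
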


\begin{rmk}
(i) Observe that $\K(\phi)=\K(A_\phi \overset{\subseteq}\longrightarrow B)\circ\K(A\longrightarrow A_\phi)$. 

(ii) We have assumed that the $\CatCa$-algebras $A,B$ are $\sigma$-unital to ensure that $A\in\Lat_f(A)$ and $B\in\Lat_f(B)$. For the general case an analogous result applies replacing $A,B$ by singly-generated ideals $I',J$ that respectively contain $I,I_\phi$ and such that $J\supseteq I_{\phi}$. 
\end{rmk}

We next show that under reasonable assumptions on the original functor $\K:\CatCa\longrightarrow \AbGp$ then $\Cu_\K$-type invariants are exact. To this end, we recall the notion of half-exactness for functors and define a surjectivity property.

\begin{dfn}
Let $\K:\mathcal{C}\longrightarrow \mathcal{D}$ be a functor between categories with exact sequences. 

(i) We say that $\K$ is \emph{half-exact} (respectively \emph{exact}) if for any short-exact sequence $0\longrightarrow I \longrightarrow A \longrightarrow Q \longrightarrow 0$ we have that $\K(I)\longrightarrow \K(A)\longrightarrow \K(Q)$ is exact (respectively $0\longrightarrow \K(I)\longrightarrow \K(A)\longrightarrow \K(Q)\longrightarrow 0$ is exact.)

(ii) We say that $\K$ has the \emph{surjectivity property} if for any exact sequence $A \longrightarrow Q \longrightarrow 0$ we have that $ \K(A)\longrightarrow \K(Q)\longrightarrow 0$ is exact.
\end{dfn}

\begin{thm}
Let $\K:\CatCa\longrightarrow \AbGp$ be an half-exact functor and consider $\Cu_\K:\CatCa\longrightarrow \Cus$ as constructed above. Let $A$ be a $\CatCa$-algebra and let $I\in\Lat(A)$. 

(i) There exists an order-embedding $\Cus$-morphism $\alpha:\Cu_\K(A)/\Cu_\K(I)\lhook\joinrel\longrightarrow  \Cu_\K(A/I)$. 

(ii) If moreover $\K$ has the surjectivity property, then $\alpha$ is a $\Cus$-isomorphism. In other words \[\Cu_\K(A)/\Cu_\K(I)\simeq  \Cu_\K(A/I).\]
\end{thm}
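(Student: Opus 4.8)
The plan is to produce $\alpha$ by functoriality and then to establish the order-embedding and the surjectivity statements by pushing each of them, coordinate by coordinate, onto the already-known behaviour of $\Cu$ and of $\K$. Let $\pi\colon A\longrightarrow A/I$ be the quotient $^*$-homomorphism and write $q:=\Cu(\pi)$. For $(x,g)\in\Cu_\K(I)$ one has $q(x)=0$ (since $\Cu(\pi)$ annihilates $\Cu(I)$), so the second coordinate of $\Cu_\K(\pi)(x,g)$ lies in $\K(I_{q(x)})=\K(0)=\{e\}$; thus $\Cu_\K(I)\subseteq\Cu_\K(\pi)^{-1}(\{0\})$ and \autoref{prop:idealquotient}(iii) factorizes $\Cu_\K(\pi)$ through a $\Cus$-morphism $\alpha\colon\Cu_\K(A)/\Cu_\K(I)\longrightarrow\Cu_\K(A/I)$ with $\alpha([s,g])=(q(s),(\K_\pi)_s(g))$. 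Being a $\Cus$-morphism, $\alpha$ is automatically order-preserving, so only the reverse implication is at stake for the embedding. Throughout I would rely on the exactness of the Cuntz semigroup, $\Cu(A)/\Cu(I)\simeq\Cu(A/I)$ via $q$ (see \cite{CRS10}), which yields both the surjectivity of $q$ and the equivalence $q(s)\leq q(t)\Leftrightarrow s\leq_{\Cu(I)}t$.

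For the order-embedding in (i), I would start from $\alpha([s,g])\leq\alpha([t,h])$ and unwind the webbing order on $\Cu_\K(A/I)$: it gives $q(s)\leq q(t)$ together with the fibre equation $\K(I_{q(s)}\subseteq I_{q(t)})((\K_\pi)_s(g))=(\K_\pi)_t(h)$. The first half produces $z\in\Cu(I)$ with $s\leq t+z$. I then form $d:=\K(I_s\subseteq I_{t+z})(g)-\K(I_t\subseteq I_{t+z})(h)\in\K(I_{t+z})$, and observe that witnessing $(s,g)\leq_{\Cu_\K(I)}(t,h)$ amounts precisely to realizing $d$ as $\K(I_z\subseteq I_{t+z})(k)$ for some $k\in\K(I_z)$. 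Now the restriction $\pi_{\mid}\colon I_{t+z}\longrightarrow I_{q(t)}$ is surjective with kernel $I_{t+z}\cap I$ (here $I_{q(t)}=\pi(I_{t+z})$ because $I_z\subseteq I$), and a short naturality computation identifies the fibre equation above with the relation $\K(\pi_{\mid})(d)=0$. Half-exactness of $\K$, applied to $0\to I_{t+z}\cap I\to I_{t+z}\to I_{q(t)}\to 0$, then places $d$ in the image of $\K(I_{t+z}\cap I\subseteq I_{t+z})$.

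I expect the main obstacle to be the mismatch between the ideal $I_{t+z}\cap I$ supplied by half-exactness and the ideal $I_z$ in which the lift must live, since in general $I_{t+z}\cap I\supsetneq I_z$. I would resolve this using the continuity of $\K$: writing $I_{t+z}\cap I$ as the inductive limit of its singly-generated subideals $I_w$ with $w\in\Cu(I)$, the algebraic nature of limits in $\AbGp$ lets me pull a preimage $c$ of $d$ back to some $\K(I_w)$ with $I_w\subseteq I_{t+z}\cap I$. Replacing $z$ by $z':=z+w$ keeps $z'\in\Cu(I)$ and $s\leq t+z'$ while leaving $I_{t+z'}=I_{t+z}$ unchanged, so that $k:=\K(I_w\subseteq I_{z'})(c)$ produces $(z',k)\in\Cu_\K(I)$ with $(s,g)\leq(t,h)+(z',k)$ by construction; hence $[s,g]\leq[t,h]$ and $\alpha$ is an order-embedding.

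Finally, for (ii) I would prove surjectivity. Given $(\bar s,\bar g)\in\Cu_\K(A/I)$, surjectivity of $q$ furnishes $s\in\Cu(A)$ with $q(s)=\bar s$, whence $I_{\bar s}=\pi(I_s)$ and $\pi_{\mid}\colon I_s\longrightarrow I_{\bar s}$ is a surjection of $\CatCa$-algebras. The surjectivity property of $\K$ then makes $(\K_\pi)_s=\K(\pi_{\mid})$ surjective, so $\bar g$ lifts to some $g\in\K(I_s)$ with $\alpha([s,g])=(\bar s,\bar g)$. A surjective order-embedding of $\Cus$-semigroups is a $\Cus$-isomorphism, giving $\Cu_\K(A)/\Cu_\K(I)\simeq\Cu_\K(A/I)$.
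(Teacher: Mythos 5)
Your proof is correct and follows essentially the same route as the paper's: characterize the preorder induced by $\Cu_\K(\pi)$ via half-exactness of $\K$ applied to the sequence $0\to I_{y+z}\cap I\to I_{y+z}\to I_{\overline{y}}\to 0$, conclude via the factorization result for $\Cus$-quotients, and use the surjectivity property of $\K$ together with surjectivity of $\Cu(\pi)$ for part (ii). The one place you go beyond the paper's sketch is in explicitly repairing the mismatch between $I_z$ and $I_{y+z}\cap I$ (the paper simply writes $\ker((\K_\pi)_{y+z})\simeq \K(z)$); your enlargement $z\mapsto z+w$, obtained from continuity of $\K$ and the algebraic nature of inductive limits in $\AbGp$, is a correct way to close that gap while keeping $I_{t+z}$ unchanged.
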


\begin{proof}
The proof is almost identical as in \cite[Theorem 4.5]{C21b} but we give the main lines for the sake of completeness. Let $0\longrightarrow I \overset{i}\longrightarrow A \overset{\pi}\longrightarrow A/I \longrightarrow 0$ be an ideal exact sequence. We first show that $\Cu_\K(\pi)(x,k)\leq \Cu_\K(\pi)(y,l)$ if and only if there exists $(z,m)\in \Cu_\K(I)$ such that $(x,k)\leq (y,l)+(z,m)$. The converse implication is almost immediate from that fact that $\Cu_\K(\pi)(\Cu_\K(I))=\{0\}$. Now let $(x,k),(y,l)\in \Cu_\K(A)$ be such that $\Cu_\K(\pi)(x,k)\leq \Cu_\K(\pi)(y,l)$. Arguing similarly as in \cite{C21b}, we deduce that there exists $z\in \Cu(A)$ such that $x\leq y+z$ and we obtain the following commutative diagram
\[
\xymatrix{
&&\K(x)\ar[rr]^{(\K_\pi)_x}\ar[d]_{\K_A(\leq)} &&\K(\overline{x})\ar[d]^{\K_A(\leq)}   \\
\K(z)\ar[rr]_{\K_A(\leq)}&&\K(y+z)\ar[rr]_{(\K_\pi)_{(y+z)}} && \K(\overline{y})
}
\]
Moreover, by half-exactness of $\K$, the bottom-row is exact and hence,  $\ker((\K_\pi)_{(y+z)})\simeq \K(z)$. We also know by hypothesis that $[\K_A(x\leq y+z)(k)]=[\K_A(y\leq y+z)(l)]$ in $\K(\overline{y})$. We deduce that there exists $m\in \K(z)$ such that $\K_A(x\leq y+z)(k)=\K_A(y\leq y+z)(l)+ \K(z\leq y+z)(m)$ from which the equivalence is established. We can now use \autoref{prop:idealquotient} (iii) to conclude that the map $\Cu_\K(\pi)$ factorizes through $\Cu_\K(I)$ to give rise to an order-embedding $\alpha:\Cu_\K(A)/\Cu_\K(I)\lhook\joinrel\longrightarrow  \Cu_\K(A/I)$. Finally, under the assumption of the surjectivity property, one can argue similarly as in \cite{C21b} to get that $\Cu_\K(\pi)$ is surjective. Therefore, by \autoref{prop:idealquotient} (iii) so is $\alpha$.
\end{proof}

\begin{cor}
Let $\K:\CatCa\longrightarrow \AbGp$ be an half-exact functor with the surjectivity property. Then the functor $\Cu_\K:\CatCa\longrightarrow \Cus$ is exact. 

In particular, $0\longrightarrow \Cu_\K(I) \longrightarrow \Cu_\K(A) \longrightarrow \Cu_\K(A/I) \longrightarrow 0$ is a short-exact sequence in $\Cus$ for any $\CatCa$-algebra $A$ and any ideal $I\in \Lat(A)$.
\end{cor}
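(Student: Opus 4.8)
The plan is to read the statement off the preceding theorem (which, under the stated hypotheses, gives $\alpha\colon\Cu_\K(A)/\Cu_\K(I)\xrightarrow{\sim}\Cu_\K(A/I)$) together with the general exactness of the ideal–quotient sequence in \autoref{prop:idealquotient}. Since every short-exact sequence in $\CatCa$ has the form $0\longrightarrow I\overset{i}\longrightarrow A\overset{\pi}\longrightarrow A/I\longrightarrow 0$ for some $I\in\Lat(A)$, it suffices to establish the displayed short-exact sequence for an arbitrary such pair $(A,I)$; this is simultaneously the ``in particular'' claim and, by the definition of exactness of a functor, the assertion that $\Cu_\K$ is exact. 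So I would fix $I\in\Lat(A)$ and check exactness at each of the three spots of $0\longrightarrow \Cu_\K(I)\longrightarrow \Cu_\K(A)\overset{\Cu_\K(\pi)}\longrightarrow\Cu_\K(A/I)\longrightarrow 0$.

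First I would record exactness at $\Cu_\K(I)$: by the lattice isomorphism $\Lat(A)\simeq\Lat(\Cu_\K(A))$ established earlier, $\Cu_\K(I)$ is a $\Cus$-ideal of $\Cu_\K(A)$, so \autoref{prop:idealppty}(i) makes the canonical map $\Cu_\K(I)\longrightarrow\Cu_\K(A)$ an order-embedding, i.e. $0\longrightarrow\Cu_\K(I)\longrightarrow\Cu_\K(A)$ is exact. Next, applying \autoref{prop:idealquotient}(ii) to the $\Cus$-semigroup $\Cu_\K(A)$ and its ideal $\Cu_\K(I)$ yields an exact sequence
\[
0\longrightarrow\Cu_\K(I)\longrightarrow\Cu_\K(A)\overset{q}\longrightarrow\Cu_\K(A)/\Cu_\K(I)\longrightarrow 0,
\]
where $q$ is the quotient map. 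By construction in the preceding theorem, the order-embedding $\alpha$ is precisely the factorization of $\Cu_\K(\pi)$ through $\Cu_\K(I)\subseteq\Cu_\K(\pi)^{-1}(\{0\})$ furnished by \autoref{prop:idealquotient}(iii); that is, $\Cu_\K(\pi)=\alpha\circ q$. Under the surjectivity property, part (ii) of the preceding theorem upgrades $\alpha$ to a $\Cus$-isomorphism $\alpha\colon\Cu_\K(A)/\Cu_\K(I)\xrightarrow{\sim}\Cu_\K(A/I)$.

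Finally I would transport exactness along $\alpha$. Since $\alpha$ is in particular order-reflecting and $\Cu_\K(\pi)=\alpha\circ q$, one has $\Cu_\K(\pi)(s_1)\leq\Cu_\K(\pi)(s_2)\iff q(s_1)\leq q(s_2)$, hence $\ker\Cu_\K(\pi)=\ker q=\im(\Cu_\K(i))$, the last equality being exactness of the quotient sequence at $\Cu_\K(A)$; this is exactness at $\Cu_\K(A)$. Moreover $\Cu_\K(\pi)=\alpha\circ q$ is a composite of surjections, hence surjective, so $\Cu_\K(A)\longrightarrow\Cu_\K(A/I)\longrightarrow 0$ is exact, giving exactness at $\Cu_\K(A/I)$. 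Combining the three spots yields the desired short-exact sequence, and the ``in particular'' statement then says exactly that $\Cu_\K$ is exact. I expect no serious obstacle here: the corollary is essentially bookkeeping on top of the preceding theorem, and the only point that genuinely needs care is the identification $\Cu_\K(\pi)=\alpha\circ q$, i.e. that the \emph{abstract} quotient $\Cu_\K(A)/\Cu_\K(I)$ is matched with the \emph{concrete} quotient $\Cu_\K(A/I)$ by the very map produced in that theorem.
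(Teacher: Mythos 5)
Your proposal is correct and follows the same route as the paper: the paper's (very terse) proof likewise reduces to ideal--quotient sequences, invokes \autoref{prop:idealquotient}(ii) for exactness of $0\to \Cu_\K(I)\to\Cu_\K(A)\to\Cu_\K(A)/\Cu_\K(I)\to 0$, and combines it with the isomorphism $\alpha$ from the preceding theorem. You merely spell out the bookkeeping (the identification $\Cu_\K(\pi)=\alpha\circ q$ and the spot-by-spot transport of exactness) that the paper leaves implicit.
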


\begin{proof}
We know that any short exact sequence in $\CatCa$ is isomorphic to an ideal-quotient short exact sequence. Now combine \autoref{prop:idealquotient} (ii) with the above theorem to get the result.
\end{proof}

\begin{qst}
It has been shown in \cite[4.4]{C21a} that $\Cu_\K$-type invariants does not pass pullbacks in general. Can we find a condition on $\K$ under which the functor $\Cu_\K$ passes pullbacks?
\end{qst}

\subsection{Invariants for \texorpdfstring{$\CatCa$}{C*}-algebras - Hausdorffized unitary Cuntz semigroup}
In this part, we use the generic $\Cu_\K$-type construction to give an insight on invariants for $\CatCa$-algebras. We also suggest other generic constructions that could be considered for future lines of work.

\vspace{0,2cm}$\hspace{-0,34cm}\bullet\,\,\textbf{The unitary Cuntz semigroup and its total version}$. 
The present author has incorporated the information contained in the $\K_1$-group into the original Cuntz semigroup which has led to the \emph{unitary Cuntz semigroup} $\Cu_1$. See \cite{C21a}. Later on, a similar approach has been used in \cite{AL23} aiming to incorporate the total $\K$-theory $\underline{\K}$ which has led to the \emph{total Cuntz semigroup} $\underline{\Cu}$. Both constructions are very alike and hence share many categorical properties. In fact, many of the proofs done in \cite{AL23} are directly drawn from the ones in \cite{C21a}, even though everything had to be re-proven with care. Using our methods, we obtain all the categorical results contained in both papers by simply (re)defining these invariants as $\Cu_\K$-type constructions. More particularly, we define
\[
\Cu_{\K_1}:\CatCa\longrightarrow \Cus \hspace{1cm}\text{ and }\hspace{1cm} \Cu_{\underline{\K}}:\CatCa\longrightarrow \Cus
\]
Observe that both of the constructions are now well-defined outside the stable rank one setting, which is a significant benefit. Moreover, we get a neat codomain category, continuity, exactness (only in the stable rank on setting), ideal-quotient theory and as we will see in a moment, distances between morphisms. 

\vspace{0,2cm}$\hspace{-0,34cm}\bullet\,\,\textbf{The $\underline{\KT}_u$ invariant and the Hausdorffized unitary Cuntz semigroup}$. We next tackle the most notorious invariant for $\CatCa$-algebras, which is commonly known as the \emph{Elliott invariant}. Over the decades and the work of many hands it has proven that the original Elliott invariant $\Ell$ is a complete invariant for the class of simple unital $\mathcal{Z}$-stable nuclear separable $\CatCa$-algebras classifying the UCT assumption. Furthermore, in the aim of classifying non-simple $\CatCa$-algebras and $^*$-homomorphisms, the original Elliott invariant has been refined several times and we denote its most sophisticated form by $\underline{\KT}_u$. (See \cite{W23}.)
The invariant $\underline{\KT}_u$ is built out of the total $\K$-Theory together with traces and the Hausdorffized algebraic $\K_1$-group and compatibility conditions. These compatibility conditions and the inherently distinct nature of the objects shaping $\underline{\KT}_u$ makes it hard for a clean categorical context. On the other hand, $\underline{\KT}_u$ has also been successfully used to classify non-simple $\CatCa$-algebras. (See e.g. \cite{GJL20}.) However, whenever dealing with non-simple $\CatCa$-algebras, one has to take into account $\underline{\KT}_u$ of all the ideals (which can be plenty) together with many more compatibility conditions. Again, this is far from being convenient from a categorical perspective. 
We now attempt to offer a well-defined and well-structured invariant in the category $\Cus$ that contains the same information as $\underline{\KT}_u$ (leaving the total $\K$-Theory aside for the moment) and such that all \textquoteleft compatibility conditions\textquoteright\ are inherently satisfied. More specifically, we incorporate the Hausdorffized algebraic $\K_1$-group into the Cuntz semigroup via a $\Cu_\K$-type construction.

\begin{prg}\textbf{The Hausdorffized unitary Cuntz semigroup}.
Let $A$ be a $\CatCa$-algebra. Recall that the \emph{Hausdorffized algebraic $\K_1$-group} $\overline{\K}^{\alg}_1:\CatCa\longrightarrow \AbGp$ is a well-defined continuous functor that sends $A\longmapsto U^\infty(A)/\overline{{DU}^\infty(A)}$. (We take the unitization of $A$ if needed.) We refer the reader to \cite{T95} and \cite{NT96} for a systematic study of this $\K$-theoretical invariant.

The \emph{Hausdorffized unitary Cuntz semigroup} is defined as \[\Cu_{\overline{\K}^{\alg}_1}:\CatCa\longrightarrow\Cus \] In particular, the Hausdorffized unitary Cuntz semigroup of $A$ can be computed as follows
\[
\Cu_{\overline{\K}^{\alg}_1}(A)=\bigsqcup\limits_{I\in \Lat_f(A)} \Cu_f(I)\times \overline{\K}^{\alg}_1(I)
\] 

From the results obtained in the previous part, we know that $\Cu_{\overline{\K}^{\alg}_1}: \CatCa\longrightarrow \Cus$ is a well-defined continuous functor.  Many more properties can be immediately deduced (e.g. ideal-quotient theory) and they will be explicitly exposed in a future work. We point out that at the moment of writing, it is not clear whether $\overline{\K}^{\alg}_1$ is half-exact or not, nor if it has the surjectivity property upon restriction some $\CatCa$-algebras. (In general, this shall not hold since unitary elements in quotients need not lift.) This shall be explored in a  future investigation where we plan to take on the conjecture stated in \cite{C23b} which stipulates that the Hausdorffized unitary Cuntz semigroup classifies unitary elements of unital $\AH_1$-algebras and more generally of unital $\ASH_1$-algebras with torsion-free $\K_1$-group. We highlight that this conjecture has been based on the fact this invariant is inherently containing the information and compatibility conditions of $\underline{\KT}_u$ (a variation of it that does not contain the total $\K$-Theory to be precise) and the fact that $\overline{\K}^{\alg}_1$ has been used in the classification of simple $\A\!\T$-algebras done in \cite{NT96}. Additionally, the de la Harpe-Skandalis determinant (closely related to $\overline{\K}^{\alg}_1$ via the Nielsen-Thomsen sequence) seems to be indispensable to classify unitary elements outside the $\AF$ setting. See \cite[4A-5B]{C23b}. 
\end{prg}

$\hspace{-0,34cm}\bullet\,\,\textbf{Other types of construction}$. 
The $\Cu_\K$-type generic method we have provided above only relies on the webbing functor that webs a \textquoteleft base space\textquoteright, i.e. a $\Cu$-semigroup $S$, with \textquoteleft fibers\textquoteright, i.e. a functor $G\in\Fun(S,\AbGp)$, to produce a $\Cus$-semigroup. Consequently, more generic methods could arise from this categorical tool. Let us give a couple of open lines of investigation.

\vspace{0,2cm}\textbf{[An invariant for Graph $\CatCa$-algebras in the category $\Cus$]} Graph $\CatCa$-algebras have been studied and classified for decades now. Most of the classification results obtained are using (variation of) filtered $\K$-Theory which roughly consists of the information contained in 6-terms $\K$-theoretical exact sequences induced by all the ideal-quotient exact sequences of the $\CatCa$-algebras. See e.g. \cite{R97} and \cite{ERRS21}, \cite{ERRS22} for state of the art results on the matter. Nevertheless, the filtered $\K$-Theory also lacks of categorical consistency, in the sense that the objects are defined to be multiple hexagonal cyclic exact sequences (in abelian groups) together with compatibility conditions. To remedy this issue, one could think of introducing the category $\varhexagon$ of hexagonal cyclic exact sequences of abelian groups and define similarly Cuntz hexagonal systems to be pairs $(S,G)$ where $S\in \Cu$ and $G\in \Fun(S,\varhexagon)$. From that point on, we would be able to construct a webbing functor $\mathfrak{w}:\Cu_{*\varhexagon}\longrightarrow \Cus$ which transforms a pair $(S,G)$ into a semigroup $S_G:=\{(s,\mathfrak{g})\mid s\in S,\mathfrak{g}\in G_1\times\dots\times G_6\}$ where $G_1,\dots, G_6$ are the groups of the hexagonal cyclic sequence $G(s)$. The sum and order would be defined similarly as done previously and since the limits in the category $\varhexagon$ are algebraic, all of the above theory would work similarly. In particular, it shall be true that the webbing functor is continuous and hence, that the functor $\Cu_{\varhexagon}:=\mathfrak{w}(\Cu(A),{\K_{\varhexagon}}_A)$, where ${\K_{\varhexagon}}_A:\Cu(A)\longrightarrow \varhexagon$ sends $x\longmapsto\K_{\varhexagon}(I_x)$, shall be well-defined and continuous from $\CatCa$ to $\Cus$.

\begin{qst}
Is a \textquoteleft filtered Cuntz semigroup\textquoteright\ $\Cu_{\varhexagon}$ able to extend some classification result obtained for graph $\CatCa$-algebras?
\end{qst}

\textbf{[A Nielsen-Thomsen sequence in the category $\Cus$]} It has been noticed in \cite{ERS09}  that the double dual $\L(F(S))$ of a $\Cu$-semigroup $S$, which roughly consists of a specific subset of lower-semicontinuous functions on the cone of functionals $F(S)$, is itself a $\Cu$-semigroup. In fact, it has been proven in \cite{R13} that $\L(F(S))\simeq S\otimes[0,\infty]$. Furthermore, the double dual of $\Cu(A)$ for a given $\CatCa$-algebra $A$ is closely related to the affine functions on tracial simplex $T(A)$ of $A$. Finally, the Nielsen-Thomsen sequence considered e.g. in \cite{NT96} is linking the latter space with the Hausdorffized algebraic $\K_1$-group and the usual $\K_1$-group of $A$. An idea to pursue would consist in replacing the Cuntz semigroup as base space by its double dual. Nevertheless, at the moment of writing, it is not clear how one could mimick $\Cu_\K$-type constructions with the base space being $\L(F(\Cu(A)))$. We pose the following question, related to \cite[Question 16.8]{GP23}.
\begin{qst}
Is there a natural way to define a functor $\LF\!\K_A: \L(F(\Cu(A)))\longrightarrow \AbGp$ from an existing invariant $\K:\CatCa\longrightarrow \AbGp$ as done for $\Cu_\K$-type constructions? If applicable, could we obtain an analogous Nielsen-Thomsen sequence in the category $\Cus$ via the split-exact sequence of \autoref{thm:splitmax} applied to the $\Cus$-semigroup $\mathfrak{w}(\L(F(\Cu(A))),\LF\!\K_A)$?
\end{qst}

\subsection{Distance between \texorpdfstring{$\Cus$}{Cu*}-morphisms}
We aim to define a semimetric on the set of $\Cus$-morphisms between webbed $\Cus$-semigroups and a fortiori between concrete $\Cus$-semigroups. Such a semimetric can be defined whenever the base spaces $S,T$ of the given webbed $\Cus$-semigroups $S_G,T_H$ admit a semimetric on $\Hom_{\Cu}(S,T)$. We first start by introducing an abstract notion comparison of $\Cus$-morphisms modeled after the one defined for $\Cu$-semigroups. 
\begin{dfn}
Let $S,T$ be $\Cus$-semigroups and let $\Lambda\subseteq S$. Consider maps $\alpha,\beta:\Lambda\longrightarrow T$ preserving the order and the compact-containment relation.

(i) We say that $\alpha$ and $\beta$ \emph{compare on $\Lambda$} and we write $\alpha\underset{\Lambda}{\simeq}\beta $ if, for any $g,h\in\Lambda$ such that $g\ll h$ (in $S$), we have that $\alpha(g)\leq \beta(h)$ and $\beta(g)\leq \alpha(h)$.

(ii) We say that $\alpha$ and $\beta$ \emph{strictly compare on $\Lambda$} and we write $\alpha\underset{\Lambda}{\approx}\beta $ if, for any $g,h\in\Lambda$ such that $g\ll h$ (in $S$), we have that $\alpha(g)\ll \beta(h)$ and $\beta(g)\ll \alpha(h)$.
\end{dfn}

We now focus on the specific case where the domain webbed $\Cu$-semigroup $S_G$ is such that $S\simeq \Lsc(\T,\overline{\N})$. We operate analogously as the work done for $\Cu$-semigroups and we refer the reader to \cite[5.1]{C22}. (Or \cite[2.1]{C23b} for a summary.) More specifically, for any $n\in\N$ we consider an equidistant partition $(x_k)_0^{2^n}$ of $\T$ of size $\frac{1}{2^n}$ with $x_0=x_{2^n}$. For any $k\in \{1,\dots,2^n\}$, we define the open interval $U_k:=]x_{k-1};x_k[$ of $\T$. It is immediate to see that $\{\overline{U_k}\}_1^{2^n}$ is a (finite) closed cover of $\T$. Now consider
\[
\Lambda^*_n:=\{(s,g)\mid s\in \Lambda_n \text{ and } g\in G(s)\}.
\]
where $\Lambda_n:=\{f\in\Lsc(\T,\{0,1\})\mid f_{|U_k} \text{ is constant for any }k\in \{1,\dots, 2^n\}\}$. 
\begin{dfn}
Let $S_H,T_H$ be webbed $\Cu$-semigroups with $S\simeq \Lsc(\T,\overline{\N})$. Let $\alpha,\beta:S_G\longrightarrow T_H$ be $\Cus$-morphisms. We define 
\[ 
dd_{\Cus}(\alpha,\beta):= \inf\limits_{n\in\N} \{ \frac{1}{2^n} \mid \alpha\underset{\Lambda^*_n}{\simeq}\beta \}.
\]  
If the infimum does not exist, we set the value to $\infty$.
\end{dfn}

This is a well-defined semimetric that we refer to as the \emph{discrete $\Cus$-semimetric}. 
We remark that one can also build a $\Cus$-metric in the following way.
\[d_{\Cus}(\alpha,\beta):=\inf \{r>0\mid \forall (1_U,g)\ll(1_{U_r},h) \text{ then } \alpha(1_U,g),\beta(1_U,g)\leq \alpha(1_{U_r},h),\beta(1_{U_r},h)\} \]
where $U$ is any open set of $\T$ and $U_r:=\underset{x\in U}{\cup}B_r(x)$. 

This is a well-defined metric on $\Hom_{\Cus}(S_H,T_H)$ which is equivalent to the discrete $\Cus$-semimetric. More specifically, we have $dd_{\Cus}\leq d_{\Cus}\leq 2dd_{\Cus}$ and hence, $dd_{\Cus}$ satisfies a weak form of triangular inequality, sometimes referred to as the 2-relaxed triangular inequality.

Furthermore, let us point out that the $\Cus$-semimetric construction can be replicate whenever the base space $S$ of the domain $\Cus$-semigroup $S_G$ has a uniform basis. E.g. whenever $S\simeq \Lsc(X,\overline{\N})$ where $X$ is any compact one-dimensional $\CW$-complex. We have decided not to pursue further in this direction yet, as we are driven by the classification of unitary elements at the moment of writing.  

Lastly, we point out that in the particular case where the $\Cus$-morphisms are in fact webbed $\Cus$-morphisms, we are able to give a better interpretation of the $\Cus$-semimetric in terms of the base spaces and fibers involved.

\begin{prop}
Let $S_G,T_H$ be webbed $\Cu$-semigroups with $S\simeq \Lsc(\T,\overline{\N})$. Consider two  webbed $\Cus$-morphisms $\alpha_\eta,\beta_\nu:S_G\longrightarrow T_H$. Then $dd_{\Cus}(\alpha_\eta,\beta_\nu)\leq \frac{1}{2^n}$ if and only if 

(i) $dd_{\Cu}(\alpha,\beta)\leq \frac{1}{2^n}$

(ii) For any $s,t\in \Lambda_n$ such that $s\ll t$ (in $S$), the diagram $D(s,t)$ is commutative
\[
\xymatrix@!R@!C@R=30pt@C=30pt{
H\circ \beta(s)\ar[d]_{H(\leq)} & G(s)\ar[l]_{\nu_{\beta(s)}}\ar[r]^{\eta_{\alpha(s)}}\ar[d]_{G(\leq)} & H\circ\alpha(s)\ar[d]^{H(\leq)}\\
H\circ\alpha(t) & G(t)\ar[l]^{\eta_{\alpha(t)}}\ar[r]_{\nu_{\beta(t)}} & H\circ \beta(t)\\
}
\]
\end{prop}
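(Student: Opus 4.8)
The plan is to reduce the metric inequalities to the underlying comparison relations and then to unpack those comparisons against the definitions of the webbed data. First I would record that, since the level-$(n+1)$ equidistant partition of $\T$ refines the level-$n$ one, we have $\Lambda_n\subseteq\Lambda_{n+1}$ and hence $\Lambda^*_n\subseteq\Lambda^*_{n+1}$; comparison on a larger set is the stronger requirement. Therefore the set $\{\tfrac{1}{2^m}\mid \alpha_\eta\underset{\Lambda^*_m}{\simeq}\beta_\nu\}$ is upward closed among the values $\tfrac{1}{2^m}$, and as these accumulate only at $0$ one checks directly that $dd_{\Cus}(\alpha_\eta,\beta_\nu)\leq\tfrac{1}{2^n}$ holds if and only if $\alpha_\eta\underset{\Lambda^*_n}{\simeq}\beta_\nu$. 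The same bookkeeping applied to the base maps gives that $dd_{\Cu}(\alpha,\beta)\leq\tfrac{1}{2^n}$ is equivalent to $\alpha\underset{\Lambda_n}{\simeq}\beta$. Thus the proposition reduces to the equivalence: $\alpha_\eta\underset{\Lambda^*_n}{\simeq}\beta_\nu$ if and only if $\alpha\underset{\Lambda_n}{\simeq}\beta$ together with the commutativity of $D(s,t)$ for all $s\ll t$ in $\Lambda_n$.

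Next I would unpack the comparison $\alpha_\eta\underset{\Lambda^*_n}{\simeq}\beta_\nu$. By \autoref{prop:cc}, a pair $(s,g)\ll(t,h)$ in $S_G$ with $s,t\in\Lambda_n$ is exactly $s\ll t$ in $S$ together with $h=G(s\leq t)(g)$; so the quantification runs over all $s\ll t$ in $\Lambda_n$ and all $g\in G(s)$, with $h$ then determined. Using $\alpha_\eta(s,g)=(\alpha(s),\eta_s(g))$, $\beta_\nu(s,g)=(\beta(s),\nu_s(g))$ and the order on $T_H$, the inequality $\alpha_\eta(s,g)\leq\beta_\nu(t,h)$ becomes the conjunction of $\alpha(s)\leq\beta(t)$ in $T$ and the fibre equality $H(\alpha(s)\leq\beta(t))(\eta_s(g))=\nu_t(h)$; symmetrically $\beta_\nu(s,g)\leq\alpha_\eta(t,h)$ becomes $\beta(s)\leq\alpha(t)$ together with $H(\beta(s)\leq\alpha(t))(\nu_s(g))=\eta_t(h)$.

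I would then read off the equivalence. The order conditions $\alpha(s)\leq\beta(t)$ and $\beta(s)\leq\alpha(t)$, taken over all $s\ll t$ in $\Lambda_n$ (taking $g$ to be the neutral element of $G(s)$ shows such a pair always exists), are precisely $\alpha\underset{\Lambda_n}{\simeq}\beta$, i.e. condition (i). Once (i) holds the connecting morphisms $H(\alpha(s)\leq\beta(t))$ and $H(\beta(s)\leq\alpha(t))$ of $D(s,t)$ are defined, and the two fibre equalities above, valid for every $g\in G(s)$ and with $h=G(s\leq t)(g)$, say exactly that the right-hand square $H(\alpha(s)\leq\beta(t))\circ\eta_s=\nu_t\circ G(s\leq t)$ and the left-hand square $H(\beta(s)\leq\alpha(t))\circ\nu_s=\eta_t\circ G(s\leq t)$ of $D(s,t)$ commute, which is condition (ii). Conversely, assuming (i) and (ii), the commutativity of $D(s,t)$ reproduces both fibre equalities for every $g$, and combined with (i) these reassemble the inequalities $\alpha_\eta(s,g)\leq\beta_\nu(t,h)$ and $\beta_\nu(s,g)\leq\alpha_\eta(t,h)$, yielding $\alpha_\eta\underset{\Lambda^*_n}{\simeq}\beta_\nu$.

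The argument is in essence a translation between three presentations of the same data, so no genuine analytic difficulty arises. The step demanding care is matching the two commuting squares of $D(s,t)$ with the two comparison inequalities: one must upgrade the pointwise fibre equalities over all $g\in G(s)$ to equalities of group morphisms (commutativity of the squares) using the relation $h=G(s\leq t)(g)$ from \autoref{prop:cc}, and take care not to transpose the directions of $\eta_s$ and $\nu_s$, which land in $H\circ\alpha$ and $H\circ\beta$ respectively. I would also keep condition (i) logically prior, since it is what guarantees that the morphisms appearing in $D(s,t)$ exist at all.
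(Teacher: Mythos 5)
Your argument is correct, and the paper in fact states this proposition without proof, so there is nothing to compare it against; what you give is the natural and complete justification. The two points that genuinely need checking are exactly the ones you address: the nestedness $\Lambda_n\subseteq\Lambda_{n+1}$ (which turns the infimum condition $dd_{\Cus}(\alpha_\eta,\beta_\nu)\leq\tfrac{1}{2^n}$ into the single comparison $\alpha_\eta\underset{\Lambda^*_n}{\simeq}\beta_\nu$), and the use of \autoref{prop:cc} to see that the pairs $(s,g)\ll(t,h)$ in $\Lambda^*_n$ range over all $s\ll t$ in $\Lambda_n$ and all $g\in G(s)$ with $h$ determined, so that the fibre equalities quantified over $g$ are precisely the commutativity of the two squares of $D(s,t)$, with condition (i) guaranteeing that the arrows $H(\leq)$ in that diagram exist.
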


We end this manuscript with some hints about future endeavors that could be pursued around the $\Cus$-semimetric.

\vspace{0,2cm}\textbf{[Approximate Intertwinings in the category $\Cus$]} In \cite{C23}, the author has exhibited two non-simple $\CatCa$-algebras that agree at level of $\Cu$ and $\K_1$ and yet are distinguished by their unitary Cuntz semigroup. The approximate intertwining theorem for the category $\Cu$ obtained in \cite[Theorem 3.16]{C22} turned out to be very handy when came to proving that the $\CatCa$-algebras constructed had isomorphic Cuntz semigroups. Developing an analogous approximate intertwining theorem in the category $\Cus$ would be a key tool to construct in a similar spirit two $\CatCa$-algebras that agree at level of their unitary Cuntz semigroup $\Cu_{\K_1}$ and yet are distinguished by their Hausdorffized unitary Cuntz semigroup $\Cu_{\overline{\K}^{\alg}_1}$.

\vspace{0,2cm}\textbf{[Classification of unitary elements some of $\ASH_1$-algebras]} As stated in the conjecture at the beginning of this paper and in \cite{C23b}, the Hausdorffized unitary Cuntz semigroup is likely to become a powerful invariant in the classification of unitary elements of a large class of stable rank one $\CatCa$-algebras. For instance, we should be able to distinguish the unitary elements exposed in \cite[4A - 5B]{C23b} by means of $\Cu_{\overline{\K}^{\alg}_1}$ and the $\Cus$-semimetric could appear handy in this task. We highlight that the $\Cus$-semimetric has a broader and more crucial role to play in the conjecture, in a similar fashion as the $\Cu$-semimetric came into play in the classification of unitary elements of $\AF$-algebra by means of the Cuntz semigroup. (See  \cite[Theorem 3.6]{C23b}.)

\end{document}